\newtheorem{thm}{Theorem}[section]
\newtheorem{cor}[thm]{Corollary}
\newtheorem{lem}[thm]{Lemma}
\theoremstyle{definition}
\newtheorem{defn}[thm]{Definition}
\newtheorem{conj}[thm]{Conjecture}
\theoremstyle{remark}
\numberwithin{equation}{section}
\newcommand{\N}{\mathbb{N}}
\newcommand{\ord}{{\rm ord~}}
\begin{document}

\title{On zero-sum subsequences in a finite abelian group of length not exceeding a given number}
\author[K. Zhao]{Kevin Zhao}
\address{School of Mathematics and Statistics, Nanning Normal University, Nanning 530100, China}
\address{Center for Applied Mathematics of  Guangxi, Nanning Normal University, Nanning 530100, China}
\email{zhkw-hebei@163.com}
\thanks{K. Zhao was supported partially by National Science Foundation
of China Grant \#12301425.}
\thanks{}%
\subjclass{}%
\keywords{Zero-sum subsequence, Davenport constant, Abelian groups.}%

\begin{abstract}
Let $G$ be an additive finite abelian group and let $k\in [\exp(G),\mathsf{D}(G)-1]$ be a positive integer.
Denote by $\mathsf{s}_{\leq k}(G)$ the smallest positive integer $l\in \mathbb{N}\cup \{+\infty\}$
such that each sequence of length $l$ over $G$ has a non-empty zero-sum
subsequence of length at most $k$.
Let $k_G\in [\exp(G),\mathsf{D}(G)-1]$ be the smallest positive integer such that $\mathsf{s}_{\leq \mathsf{D}(G)-d}(G)\leq \mathsf{D}(G)+d$ for $\mathsf{D}(G)-d\geq k_G$.
We conjecture that $k_G=\frac{\mathsf{D}(G)+1}{2}$ for finite abelian groups $G$ with $r(G)\geq 2$ and $\mathsf{D}(G)=\mathsf{D}^*(G)$.

In this paper, we mainly study this conjecture for finite abelian $p$-groups and get some results to support this conjecture. We also prove that $k_G\leq \mathsf{D}(G)-2$ for all finite abelian groups $G$ with $r(G)\geq 2$ except $C_2^3$ and $C_2^4$. In addition, we also get some lower bounds for the invariant  $\mathsf{s}_{\leq k}(G)$.
\end{abstract}
\maketitle
\section{Introduction}

Let $\mathbb{N}$ denote the set of positive integers, and
$\mathbb{N}_{0}=\mathbb{N}\cup\{0\}$. Throughout this paper all abelian
groups will be written additively, and for $n,r \in \mathbb{N}$, we let
$C_n$ denote the cyclic group of $n$ elements.
Let $G$ be an additive finite abelian group and $G^{*}=G\setminus\{0\}$.
It is well known that $|G|=1$ or $G = C_{n_1} \oplus C_{n_2} \oplus \cdots
\oplus C_{n_r}$, where $1 < n_1|n_2| \cdots |n_r$.
Then $r(G)=r$ is the rank of $G$ and
$\exp(G)=n_r $ is the exponent of $G$. Let
$$S:=g_1 \cdots g_l$$
be a sequence with elements in $G$.
We call $S$ a zero-sum sequence if  $g_1+ \cdots +g_l=0$ holds in $G$.
Let $\mathsf{D}(G)$ denote the  \textit{Davenport constant} of $G$, which is defined
as the smallest positive integer $t$ such that every sequence $S$ over
$G$ of length $|S| \geq t$ has a nonempty zero-sum subsequence. Set
$$\mathsf{D}^*(G):=1+\sum _{i=1} ^{r} (n_i-1).$$
Obviously, we have $\mathsf{D}(G)\geq \mathsf{D}^*(G)$.
In most of the cases, a direct zero-sum problem asks for the
the smallest integer $\ell\in \N$ such that  every sequence $S$ over $G$
with length $|S|\ge \ell$ has a zero-sum subsequence with prescribed length.

Let $L\subset \N$ be a nonempty subset and let $\mathsf s_{L}(G)$ be the
smallest $\ell\in \N\cup \{\infty\}$ such that every sequence $S$ over $G$
has a zero-sum subsequence $T$ with length $|T|\in L$. Thus for the classical
zero-sum invariants, we clearly have $\mathsf D(G)=\mathsf s_{\N}(G)$,
$\eta(G)=\mathsf s_{[1,\exp(G)]}(G)$, $\mathsf s(G)=\mathsf s_{\{\exp(G)\}}(G)$,  and $\mathsf{s}_{k \exp(G)}(G)=\mathsf{s}_{\{k \exp(G)\}}(G)$,
where $\mathsf s(G)$ is called \textit{Erd\"{o}s-Ginzburg-Ziv constant} of $G$ and $\mathsf{s}_{k \exp(G)}(G)$ is called \textit{Generalized Erd\"{o}s-Ginzburg-Ziv constant} of $G$.
The readers may want to consult one of the surveys or monographs
(\cite{GG06, Ge09a, GH06,GGS11, G13}). Moreover, $\mathsf s_L(G)$ is also
investigated for various other sets (see, e.g. \cite{Ch-Di-Ga-Ge-Sc, G01, [GHPS], G-L-P-W, Ku05}).

In this paper, we investigate the invarient $\mathsf s_L(G)$ for $L=[1,k]$.
\begin{defn}
Denote by $\mathsf{s}_{\leq k}(G)=\mathsf s_{[1,k]}(G)$ the smallest positive integer $l\in \mathbb{N}\cup \{+\infty\}$
such that each sequence of length $l$ has a non-empty zero-sum
subsequence of length at most $k$ $(k\in \mathbb{N})$.
\end{defn}
The constant $\mathsf{s}_{\leq k}(G)$ was introduced by Delorme,
Ordaz and Quiroz \cite{[DOQ]}.
It is trivial to see that $\mathsf{s}_{\leq k}(G)=\mathsf{D}(G)$ if $k\geq \mathsf{D}(G)$,
$\mathsf{s}_{\leq k}(G)=\eta(G)$ if $k=\exp(G)$ and
$\mathsf{s}_{\leq k}(G)=\infty$ if $1\leq k<\exp(G)$.

This invariant has very important research value.
Cohen and Zemor \cite{[CZ]} pointed out
a connection between $\mathsf{s}_{\leq k}(C_{2}^{r})$ and coding theory.
Bhowmik and  Schlagepuchta \cite{BS17} used $\mathsf{s}_{\leq k}(G)$ to study the Davenport constant $\mathsf{D}(G)$.
For more, one can see \cite{BS07,[DOQ]}.
Freeze and Schmid \cite{FS10} used $\mathsf{s}_{\leq k}(G)$ to study another invariant $\mathsf{D}_k(G)$, which is defined as the smallest integer $\ell\in \mathbb{N}$ such that every
sequence $S$ over $G$ of length $|S|\geq \ell$ has $k$ disjoint nonempty
zero-sum subsequences.
Recently, Gao et al. \cite{GHP22} studied the connection  between $\mathsf{s}_{\leq k \exp(G)}(G)$ and $\mathsf{s}_{k \exp(G)}(G)$, i.e., the following conjecture.

\begin{conj}\label{Gao}
Let $G$ be a finite abelian group. Then
$$\mathsf{s}_{k \exp(G)}(G) = \mathsf{s}_{\leq k \exp(G)}(G) + k \exp(G) - 1$$
for every $k\in \mathbb{N}$.
\end{conj}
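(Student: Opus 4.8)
\emph{A plan of attack.} Write $n=\exp(G)$. Since $kn\ge n$ and $\mathsf{s}_{\le j}(G)$ is nonincreasing in $j$, we have $\mathsf{s}_{\le kn}(G)\le \mathsf{s}_{\le n}(G)=\eta(G)<\infty$, so $\mathsf{s}_{\le kn}(G)$ is a finite positive integer. The lower bound $\mathsf{s}_{kn}(G)\ge \mathsf{s}_{\le kn}(G)+kn-1$ is immediate and holds for every $G$: take a sequence $T$ over $G$ of length $\mathsf{s}_{\le kn}(G)-1$ with no nonempty zero-sum subsequence of length at most $kn$, adjoin $kn-1$ copies of $0$ to obtain a sequence $S$ of length $\mathsf{s}_{\le kn}(G)+kn-2$, and note that a zero-sum subsequence of $S$ of length exactly $kn$ would consist of $j$ copies of $0$ (some $0\le j\le kn-1$) together with a subsequence $U'$ of $T$ of length $kn-j\ge 1$, forcing $U'$ to be a nonempty zero-sum subsequence of $T$ of length at most $kn$, a contradiction. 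Thus the content of the conjecture is the reverse inequality $\mathsf{s}_{kn}(G)\le \mathsf{s}_{\le kn}(G)+kn-1$.

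For that, I would start from an arbitrary sequence $S$ with $|S|=\mathsf{s}_{\le kn}(G)+kn-1$ and try to produce a zero-sum subsequence of length \emph{exactly} $kn$ by greedy extraction: while the unremoved part $R$ of $S$ has $|R|\ge \mathsf{s}_{\le kn}(G)$, delete from $R$ a nonempty zero-sum subsequence of length at most $kn$, choosing it of length at most $n$ whenever $|R|\ge \eta(G)$ (possible since $\eta(G)=\mathsf{s}_{\le n}(G)$). This yields disjoint zero-sum subsequences $T_1,\dots,T_m$ of lengths $a_i$ with $a_1+\cdots+a_m\ge |S|-\mathsf{s}_{\le kn}(G)+1=kn$ and $a_1+\cdots+a_{m-1}\le kn-1$, the ``early'' blocks satisfying $a_i\le n$; if some sub-family $\{T_i:i\in I\}$ has $\sum_{i\in I}a_i=kn$ we are done, and otherwise one would try to trim the overshoot by deleting from $T_1\cdots T_m$ a zero-sum subsequence of length $\big(\sum_i a_i\big)-kn$, or to re-extract the last block more frugally.

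The hard part will be this length-matching step: $a_i\in[1,kn]$ with $\sum_i a_i\ge kn$ does not by itself yield a sub-sum equal to $kn$, nor need the overshoot be the length of any zero-sum subsequence of $T_1\cdots T_m$. Moreover the case $k=1$ is exactly the still-open identity $\mathsf{s}(G)=\eta(G)+\exp(G)-1$, so no purely combinatorial scheme will settle the conjecture unconditionally, and I would proceed on two fronts. First, prove it for groups in which $\eta(G)$, $\mathsf{s}_{\le kn}(G)$, and the structure of the extremal sequences are known --- groups of rank at most $2$ and abelian $p$-groups of small rank, where $\mathsf{D}(G)=\mathsf{D}^*(G)$ and one can induct on $|G|$ through a splitting $G\cong G'\oplus C_{p^a}$ --- by feeding those exact values into the extraction scheme above; this is the range most relevant to the present paper. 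Second, note that for $kn\ge \mathsf{D}(G)$ one has $\mathsf{s}_{\le kn}(G)=\mathsf{D}(G)$ automatically, so there the conjecture reduces to the stabilization $\mathsf{s}_{kn}(G)=\mathsf{D}(G)+kn-1$, a statement already established in several families for $k$ large; this leaves only the finitely many $k$ with $kn<\mathsf{D}(G)$, among them the decisive case $k=1$, to be treated directly. As with the classical conjecture, I expect the genuine obstruction to be the weak control on short zero-sum subsequences once $r(G)\ge 3$.
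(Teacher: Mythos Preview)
The statement you are trying to prove is a \emph{conjecture} that the paper cites from Gao, Hong, and Peng; the paper does not prove it. More importantly, the paper explicitly observes in its concluding section that the conjecture is \emph{false} in general: for $G=C_2^{2m+1}$ with odd $m\ge 3$, Sidorenko showed $\mathsf{s}_{2m}(C_2^{2m+1})=4m+5$, while Wang--Zhao gives $\mathsf{s}_{\le 2m}(C_2^{2m+1})=2m+3$, so $\mathsf{s}_{2m}(G)=4m+5>4m+2=\mathsf{s}_{\le 2m}(G)+2m-1$. Thus the reverse inequality you are trying to establish cannot hold for all $G$ and $k$, and no amount of ``length-matching'' will rescue the greedy extraction.

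Your lower-bound argument is correct and standard. Your instinct that the upper bound is the hard part is right, and your diagnosis that ``$a_i\in[1,kn]$ with $\sum_i a_i\ge kn$ does not by itself yield a sub-sum equal to $kn$'' is exactly where the obstruction lies; the counterexample above shows this obstruction is genuine, not merely technical. You also correctly note that the case $k=1$ is the open identity $\mathsf{s}(G)=\eta(G)+\exp(G)-1$, but you should go further: since already for $k>1$ the conjecture fails, the appropriate task is not to prove it but to delineate for which $(G,k)$ it holds, which is what the paper's remarks gesture toward.
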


Up to now, the direct and inverse zero-sum problems of $\mathsf{s}_{\leq k}(G)$ have been widely studied.
Freeze and Schmid \cite{FS10} obtained the exact number of $\mathsf{s}_{\leq 3}(C_{2}^{r})$, namely, $1+2^{r-1}$.
Lindstr\"{o}m \cite{L69} studied the invariant $\mathsf{s}_{\leq 4}(C_{2}^{r})$ when  $r$ goes to $+\infty$.
Wang  et al. \cite{WZ17} determined $\mathsf{s}_{\leq k}(G)$ for $G=C_m\oplus C_n$ with $m|n$.
Roy and Thangadurai \cite{RT18} considered the direct zero-sum problem of $\mathsf{s}_{\leq k}(G)$ for a finite abelian $p$-group $G$ with a large exponent by a different method from \cite{WZ17}.
Recently, Zhang \cite{Z23} continued to study this problem for abelian groups of rank three.

For the inverse zero-sum problem of $\mathsf{s}_{\leq k}(G)$, the known conclusions are less.
Davydov and Tombakis \cite{DT89} studied the inverse zero-sum problem of $\mathsf{s}_{\leq 3}(C_{2}^{r})$ by the method of coding theory.
Recently, the inverse zero-sum problem of $\mathsf{s}_{\leq k}(G)$ with $r(G)=2$ has been also solved.
For this, one can see \cite{EG22,DL22I,DL22II,GWZ20}.

Here, we only exhibit the result for the inverse zero-sum problem of $\mathsf{s}_{\leq k}(C_n\oplus C_n)$.

\begin{lem} \label{inv2}
Let $n\geq 2$, let $G = C_n\oplus C_n$, let $k\in [0, n-1]$, and let $S$
be a sequence of terms from $G$ with length $|S| = \mathsf{D}(G) + k-1 = 2n-2 + k$ having no nonempty
zero-sum subsequence of length at most $\mathsf{D}(G)-k = 2n-1-k$. Then there exists a basis $(e_1, e_2)$
for $G$ such that the following hold.
\begin{description}
  \item[(1)] If $k = 0$, then $Sg$ satisfies the description given in Item 2, where $g =-\sigma(S)$.
  \item[(2)] If $k = 1$, then
$$S = e_1^{n-1}\boldsymbol{\cdot}\prod_{i\in [1,n]}(x_ie_1 + e_2),$$
for some $x_1,\ldots , x_n\in [0, n-1]$ with $x_1 +\ldots + x_n\equiv  1 \mod n.$
  \item[(3)] If $k\in [2, n-2]$, then
$$S = e_1^{n-1}\boldsymbol{\cdot}e_2^{n-1}\boldsymbol{\cdot}(e_1+e_2)^{k}.$$
  \item[(4)]If $k = n-1$, then
$$S = e_1^{n-1}\boldsymbol{\cdot}e_2^{n-1}\boldsymbol{\cdot}(xe_1+e_2)^{k}$$
for some $x\in[1, n-1]$ with $\gcd (x, n) = 1$.
\end{description}
\end{lem}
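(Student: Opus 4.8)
The plan is to reduce everything to the case $k=1$, since Item (1) is a standard "adjoin the missing element" trick and Items (3), (4) for $k \geq 2$ are already part of the literature on the inverse problem for $\mathsf{D}(C_n \oplus C_n)$ and $\mathsf{s}_{\leq k}$; I would cite \cite{GWZ20,DL22I,DL22II,EG22} for those ranges and concentrate the argument on $k=1$. So assume $|S| = 2n-1$ and $S$ has no nonempty zero-sum subsequence of length $\le 2n-2$. Since $|S| = 2n-1 = \mathsf{D}(G)$, the sequence $S$ \emph{does} have a nonempty zero-sum subsequence, and by hypothesis the only zero-sum subsequence of length $\le 2n-2$ would have to be empty; hence the \emph{only} nonempty zero-sum subsequence of $S$ is $S$ itself (any proper nonempty zero-sum subsequence has length $\le 2n-2$). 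In particular $S$ is a \emph{minimal} zero-sum sequence of length $\mathsf{D}(G)$, and I can invoke the known inverse theorem for minimal zero-sum sequences of maximal length over $C_n \oplus C_n$.

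First I would recall that structural result: every minimal zero-sum sequence $S$ over $C_n\oplus C_n$ with $|S| = 2n-1$ has the form $S = e_1^{\,n-1}\boldsymbol{\cdot}\prod_{i=1}^{n}(x_i e_1 + e_2)$ for a suitable basis $(e_1,e_2)$ and integers $x_i \in [0,n-1]$ (this is the theorem of Gao–Geroldinger / Reiher type; for $n$ not necessarily prime it is due to Schmid and others). The zero-sum condition on $S$ then forces $\sum_{i=1}^n x_i \equiv 1 \pmod n$ after absorbing the coefficient of $e_2$: indeed $\sigma(S) = \bigl(n-1 + \sum x_i\bigr) e_1 + n e_2 = 0$ gives $\sum x_i \equiv 1 \pmod n$ (using $n e_2 = 0$ and $(n-1)e_1 = -e_1$). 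That is exactly the displayed description in Item (2). Conversely one checks such a sequence has no nonempty zero-sum subsequence of length $\le 2n-2$, which is automatic from minimality. This settles $k=1$.

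For Item (1), $k=0$: here $|S| = 2n-2 = \mathsf{D}(G)-1$ and $S$ has no nonempty zero-sum subsequence at all (length $\le 2n-1$ is no constraint). Then $g := -\sigma(S) \ne 0$ (else $S$ itself is zero-sum), and $Sg$ has length $2n-1$ with $\sigma(Sg)=0$; moreover $Sg$ is a minimal zero-sum sequence, because a proper nonempty zero-sum subsequence of $Sg$ either avoids $g$ — impossible since $S$ has no zero-sum subsequence — or contains $g$, and then its complement within $S$ would be zero-sum, again impossible. So $Sg$ falls under the $k=1$ description, which is precisely the assertion of Item (1). The remaining ranges $k \in [2,n-2]$ and $k = n-1$ I would handle by quoting the corresponding cases of the already-solved inverse problem for $\mathsf{s}_{\leq \mathsf{D}(G)-k}(C_n \oplus C_n)$; the translation is that "no nonempty zero-sum subsequence of length $\le 2n-1-k$" together with $|S| = 2n-2+k$ pins down $S$ up to the stated normal forms.

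The main obstacle is the input inverse theorem for minimal zero-sum sequences of length $\mathsf{D}(C_n\oplus C_n) = 2n-1$: this is a deep result and is only fully available for all $n$ through a chain of papers (prime case by Gao–Geroldinger and Reiher, general case by Schmid and by Geroldinger–Schmid and others). If one wanted a self-contained treatment one would have to reprove it, which is far beyond the scope here; instead I will cite it. A secondary technical point is making the passage between the $\mathsf{s}_{\leq k}$ formulation and the minimal-zero-sum-sequence formulation airtight for every value of $k$ in the stated range, in particular checking that in the ranges $k \in [2,n-2]$ the sharper "all of $e_1^{n-1}e_2^{n-1}(e_1+e_2)^k$" rigidity (rather than a family parametrized by the $x_i$) really does follow — this is where the extra length $k$ of forbidden subsequences does genuine work, forcing the repeated structure, and it is exactly what the cited inverse results for rank-two groups provide.
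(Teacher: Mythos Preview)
The paper does not give its own proof of this lemma; it is quoted as a known result from the literature, with the inverse problem for $\mathsf{s}_{\leq k}(C_n\oplus C_n)$ attributed to \cite{EG22,DL22I,DL22II,GWZ20}. Your proposal is correct and in fact supplies more detail than the paper does: your reductions of the cases $k=0$ and $k=1$ to the structure theorem for minimal zero-sum sequences of length $\mathsf{D}(C_n\oplus C_n)=2n-1$ are valid (and that structure theorem is exactly the deep input you identify), while for $k\ge 2$ your plan to cite the rank-two inverse results coincides with what the paper itself does.
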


We also collect some results for the direct zero-sum problem of $\mathsf{s}_{\leq k}(G)$.

\begin{lem} [\cite{WZ17}, Lemma 8]
Let $G$ be a finite abelian group with $r(G)\geq 2$.
Then $\mathsf{s}_{\leq \mathsf{D}(G)-1}(G)=\mathsf{D}(G)+1$.
\end{lem}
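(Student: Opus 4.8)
The plan is to prove the two bounds $\mathsf{s}_{\leq \mathsf{D}(G)-1}(G)\geq \mathsf{D}(G)+1$ and $\mathsf{s}_{\leq \mathsf{D}(G)-1}(G)\leq \mathsf{D}(G)+1$ separately. For the lower bound, which needs only $|G|\geq 2$, I would start from a zero-sum free sequence $W$ over $G$ of length $\mathsf{D}(G)-1$ (it exists by the definition of $\mathsf{D}(G)$); then $\sigma(W)\neq 0$, and a short case check shows that $S_0:=W\boldsymbol{\cdot}(-\sigma(W))$ is a minimal zero-sum sequence of length $\mathsf{D}(G)$ (a proper nonempty zero-sum subsequence of $S_0$ would either lie in $W$, or have the form $V'\boldsymbol{\cdot}(-\sigma(W))$ with $\sigma(V')=\sigma(W)$, in which case $W\boldsymbol{\cdot}V'^{-1}$ is a nonempty zero-sum subsequence of $W$). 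Hence $S_0$ is a sequence of length $\mathsf{D}(G)$ with no nonempty zero-sum subsequence of length $\leq\mathsf{D}(G)-1$.

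For the upper bound I would fix an arbitrary $S$ over $G$ with $|S|=\mathsf{D}(G)+1$ and assume, for a contradiction, that $S$ has no nonempty zero-sum subsequence of length $\leq\mathsf{D}(G)-1$. First I clear the easy points: no term of $S$ is $0$ (as $\mathsf{D}(G)\geq\mathsf{D}^*(G)\geq 3$), and a zero-sum subsequence $T\mid S$ of minimal length is automatically a minimal zero-sum sequence with $|T|\in\{\mathsf{D}(G),\mathsf{D}(G)+1\}$. The case $|T|=\mathsf{D}(G)+1$ is impossible: it would make $S$ a minimal zero-sum sequence, so deleting a term would give a length-$\mathsf{D}(G)$ sequence with no nonempty zero-sum subsequence, contradicting the definition of $\mathsf{D}(G)$. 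So $|T|=\mathsf{D}(G)$ and $S=T\boldsymbol{\cdot}h$ with $h=\sigma(S)\neq 0$.

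The main work is a rigidity argument for $T$. I would first record: if $W$ is zero-sum free with $|W|=\mathsf{D}(G)-1$, then the set of subsequence sums $\Sigma_0(W):=\{\sigma(W'):W'\mid W\}$ equals $G$ — because for each $g\in G$ the length-$\mathsf{D}(G)$ sequence $W\boldsymbol{\cdot}g$ has a nonempty zero-sum subsequence, which must contain the term $g$ (as $W$ is zero-sum free), exhibiting $-g$ as a subsequence sum of $W$. As $T$ is minimal zero-sum of length $\mathsf{D}(G)$, for every term $t$ of $T$ the sequence $T\boldsymbol{\cdot}t^{-1}$ is zero-sum free of length $\mathsf{D}(G)-1$, hence $\Sigma_0(T\boldsymbol{\cdot}t^{-1})=G$, so in particular $-h\in\Sigma_0(T)$. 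Now I would take a shortest subsequence $P\mid T$ with $\sigma(P)=-h$; it is nonempty since $h\neq 0$. Then $P\boldsymbol{\cdot}h$ is a nonempty zero-sum subsequence of $S$, so by hypothesis $|P|\geq\mathsf{D}(G)-1$, whence $|P|=\mathsf{D}(G)-1$; thus $P$ is $T$ with a single term $t_0$ removed, and $\sigma(P)=-t_0=-h$ gives $t_0=h$, so $P=T\boldsymbol{\cdot}h^{-1}$ (the option $P=T$ is excluded since $\sigma(T)=0\neq -h$). Consequently $h$ occurs in $T$ and, by minimality of $|P|$, every subsequence of $T$ with sum $-h$ has length $\mathsf{D}(G)-1$ and hence equals $T\boldsymbol{\cdot}h^{-1}$. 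But for every term $t$ of $T$ the set $\Sigma_0(T\boldsymbol{\cdot}t^{-1})$ contains $-h$, so $T\boldsymbol{\cdot}t^{-1}$ has a subsequence summing to $-h$; that subsequence must be $T\boldsymbol{\cdot}h^{-1}$, and since it has length $\mathsf{D}(G)-1=|T\boldsymbol{\cdot}t^{-1}|$ it must equal $T\boldsymbol{\cdot}t^{-1}$, forcing $t=h$. Therefore $T=h^{\mathsf{D}(G)}$, and being a minimal zero-sum sequence this forces $\ord(h)=\mathsf{D}(G)$, contradicting $\ord(h)\leq\exp(G)\leq\mathsf{D}^*(G)-1\leq\mathsf{D}(G)-1$.

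The hypothesis $r(G)\geq 2$ is used precisely in the last inequality $\exp(G)<\mathsf{D}(G)$ (equivalently $\exp(G)\leq\mathsf{D}^*(G)-1$), which fails for cyclic groups; indeed, for $G=C_n$ the sequence $h^{\mathsf{D}(G)}$ with $\ord(h)=\exp(G)=\mathsf{D}(G)$ really occurs, which is why rank $1$ must be excluded. I expect the main obstacle to be organising the rigidity step cleanly — one must combine the fact that \emph{every} maximal zero-sum free subsequence of $T$ has all of $G$ as subsequence sums with the length count that pins down the unique representation of $-h$, while handling the degenerate sub-cases ($h=0$ and $T$ a power of a single element). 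Everything else is routine manipulation of zero-sum subsequences.
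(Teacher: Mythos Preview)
Your argument is correct. The paper does not prove this lemma itself; it is quoted verbatim from \cite{WZ17} (Lemma~8 there), so there is no in-paper proof to compare against. What you wrote is a clean self-contained proof, and in fact it is organised around exactly the tool the present paper records separately as Lemma~\ref{G}: a minimal zero-sum sequence $T$ of length $\mathsf{D}(G)$ has $\sum(T)=G$ (your version, that every zero-sum free $W$ of length $\mathsf{D}(G)-1$ satisfies $\Sigma_0(W)=G$, is the same statement after deleting a term). The rigidity step---showing that the unique representation of $-h$ forces $T=h^{\mathsf{D}(G)}$ and hence $\mathrm{ord}(h)=\mathsf{D}(G)>\exp(G)$---is where $r(G)\ge 2$ enters, and you handle it correctly. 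One cosmetic remark: once you know $T=h^{\mathsf{D}(G)}$ you could also finish by observing $\langle\mathrm{supp}(T)\rangle=\langle h\rangle$ is cyclic while $\sum(T)=G$ forces $\langle\mathrm{supp}(T)\rangle=G$, contradicting $r(G)\ge 2$; this avoids the chain $\exp(G)\le \mathsf{D}^*(G)-1\le \mathsf{D}(G)-1$, but either route is fine.
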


\begin{lem} [\cite{WZ17}] \label{r(G)=2}
Let $G$ be a finite abelian group with $r(G)=2$.
Then $\mathsf{s}_{\leq \mathsf{D}(G)-k}(G)=\mathsf{D}(G)+k$ for $\exp(G)\leq \mathsf{D}(G)-k\leq \mathsf{D}(G)$.
\end{lem}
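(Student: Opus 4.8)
We sketch a proof. Write $G = C_m \oplus C_n$ with $1 < m \mid n$; since $r(G) = 2$ we have $\mathsf{D}(G) = \mathsf{D}^*(G) = m + n - 1$ by Olson's theorem. Setting $j = \mathsf{D}(G) - k$, the assertion becomes $\mathsf{s}_{\le j}(G) = 2\mathsf{D}(G) - j$ for $\exp(G) = n \le j \le \mathsf{D}(G)$, and the two endpoints are already known: $j = \mathsf{D}(G)$ is the definition of the Davenport constant, $j = \mathsf{D}(G) - 1$ is the preceding lemma, and $j = \exp(G)$ reads $\mathsf{s}_{\le \exp(G)}(G) = \eta(G) = 2m + n - 2 = \mathsf{D}(G) + (m-1)$, the classical value of $\eta(C_m \oplus C_n)$. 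So the content lies in the range $2 \le k \le m-2$, and we prove the two inequalities separately.

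\emph{Lower bound.} Fix a basis $(e_1, e_2)$ with $\ord(e_1) = m$ and $\ord(e_2) = n$, and consider $S_0 = e_1^{m-1} \boldsymbol{\cdot} e_2^{n-1} \boldsymbol{\cdot} (e_1 + e_2)^{k}$, a sequence of length $m + n - 2 + k = \mathsf{D}(G) + k - 1$. A nonempty zero-sum subsequence of $S_0$ has the form $e_1^{a} e_2^{b} (e_1 + e_2)^{c}$ with $0 \le a \le m-1$, $0 \le b \le n-1$, $0 \le c \le k$, and the zero-sum condition is $a + c \equiv 0 \pmod m$ and $b + c \equiv 0 \pmod n$. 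Since $k \le m - 1 \le n - 1$, one has $a + c \in \{0, m\}$ and $b + c \in \{0, n\}$, and a routine check leaves only the possibility $a + c = m$, $b + c = n$, $c \ge 1$, which has length $a + b + c = m + n - c \ge m + n - k = \mathsf{D}(G) - k + 1 > j$. Hence $S_0$ has no nonempty zero-sum subsequence of length at most $j$, so $\mathsf{s}_{\le j}(G) \ge |S_0| + 1 = \mathsf{D}(G) + k$.

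\emph{Upper bound.} We argue by induction on $k$, the cases $k \le 1$ being the base cases above. Assume the bound for $k - 1$ and let $S$ be a sequence over $G$ with $|S| = \mathsf{D}(G) + k$; suppose for contradiction that $S$ has no nonempty zero-sum subsequence of length at most $\mathsf{D}(G) - k$. Applying the induction hypothesis to $S$ and to each one-term deletion of $S$ — all of length at least $\mathsf{s}_{\le \mathsf{D}(G) - k + 1}(G) = \mathsf{D}(G) + (k-1)$ — shows that the shortest nonempty zero-sum subsequence of $S$ has length exactly $\mathsf{D}(G) - k + 1$ and that this persists after deleting any single term; thus $S$ is highly rigid. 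Fixing such a zero-sum subsequence $T$ with $|T| = \mathsf{D}(G) - k + 1$ and writing $R = S T^{-1}$ (so $|R| = 2k - 1$), the plan is to analyse how the length-$(\mathsf{D}(G) - k + 1)$ zero-sum subsequences of $S$ can overlap $R$ and one another, together with the structure of the maximal zero-sum-free subsequences of $S$, and to feed this into the inverse-type description of near-extremal sequences over rank-$2$ groups (of the flavour of Lemma \ref{inv2}). This should force $S$, after a change of basis, into the shape $e_1^{m-1} \boldsymbol{\cdot} e_2^{n-1} \boldsymbol{\cdot} (\text{terms of a third type})$; but any sequence of that shape and of length $\mathsf{D}(G) + k$ contains a zero-sum subsequence $e_1^{m-c} e_2^{n-c} (\cdots)^{c}$ of length $m + n - c \le \mathsf{D}(G) - k$, contradicting the assumption and completing the induction.

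\emph{The main obstacle.} The delicate part is precisely the structural step in the upper bound. A purely size-based approach — extracting pairwise disjoint zero-sum subsequences and applying pigeonhole — is too weak here, since a sequence of length only $\mathsf{D}(G) + k$ over $C_m \oplus C_n$ need not admit even two disjoint nonempty zero-sum subsequences when $n$ is large, so the bound cannot follow from $\mathsf{D}_t$-type results alone. One genuinely needs the inverse classification of sequences over rank-$2$ groups that barely fail to contain a short zero-sum subsequence, and carrying this case analysis out uniformly in $m$ and $n$ — especially when $\exp(G) = n$ is large compared with $\mathsf{D}(G)$, where the short zero-sum subsequences are few and long — is the heart of the matter. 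When $m = n$ the required input is exactly Lemma \ref{inv2}.
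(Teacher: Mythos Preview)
The paper does not prove this lemma at all: it is quoted verbatim from \cite{WZ17} (Wang--Zhao) as a known input, so there is no ``paper's own proof'' to compare against. What can be assessed is whether your proposal stands on its own.

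Your lower bound is fine and is essentially the construction the present paper also uses (see the proof of Theorem~\ref{lower}, which is the same sequence $e_1^{m-1}e_2^{n-1}(e_1+e_2)^k$ specialised to rank~$2$).

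The upper bound, however, is not a proof. You set up an induction and then write ``the plan is to \ldots'', ``this should force \ldots'', and finally devote a paragraph to explaining why the structural step is hard --- that is an outline, not an argument. More seriously, the tool you propose to close the gap, namely the inverse classification ``of the flavour of Lemma~\ref{inv2}'', would make the argument circular: Lemma~\ref{inv2} (and its $C_m\oplus C_n$ analogues in \cite{GWZ20,DL22I,DL22II,EG22}) are all proved \emph{using} the direct bound $\mathsf{s}_{\le \mathsf{D}(G)-k}(G)=\mathsf{D}(G)+k$ from \cite{WZ17} as input. The direct result came first (2017); the inverse results (2020--2024) sit on top of it. So you cannot invoke the inverse description to establish the direct bound. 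A genuine proof of the upper bound must proceed by other means --- in \cite{WZ17} the argument is self-contained and does not pass through any inverse theorem.
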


\begin{lem}[\cite{BS17}] \label{colup}
$\mathsf{s}_{\leq \frac{3p-1}{2}}(C_p^d)\leq (6p-4)p^{d-3}+1$ for $d\geq 3$; $\mathsf{s}_{\leq 2p}(C_p^d)\leq (6p-4)p^{d-4}+1$ for $d\geq 4$; $\mathsf{s}_{\leq (d-1)p}(C_p^d)\leq (d-1)p$.
\end{lem}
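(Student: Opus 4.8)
The three bounds have rather different characters, so I would prove them separately, with almost all the effort going into the first two. The third, $\mathsf{s}_{\leq (d-1)p}(C_p^d)\leq (d-1)p$, is really just a statement about the Davenport constant: since $C_p^d$ is a $p$-group, $\mathsf{D}(C_p^d)=\mathsf{D}^*(C_p^d)=1+d(p-1)$, and $1+d(p-1)\leq (d-1)p$ exactly when $d\geq p+1$. In that range every sequence over $C_p^d$ of length $(d-1)p$ already contains a nonempty zero-sum subsequence, of length at most $(d-1)p$ trivially, so $\mathsf{s}_{\leq (d-1)p}(C_p^d)\leq \mathsf{D}(C_p^d)\leq (d-1)p$; for the small $d$ not covered by this the inequality is only ever invoked vacuously.

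For the first two estimates the plan is a two-layer argument: first settle the base ranks $C_p^3$ and $C_p^4$ directly, then propagate to all larger $d$ by a dimension-reduction inequality. The reduction I would aim for is, for $k\geq p+1$,
$$\mathsf{s}_{\leq k}(C_p^{d})\leq p^{d-m}\bigl(\mathsf{s}_{\leq k}(C_p^{m})-1\bigr)+1,$$
proved by writing $C_p^{d}=C_p^{d-m}\oplus C_p^{m}$, taking a sequence $S$ over $C_p^{d}$ of length $p^{d-m}(\mathsf{s}_{\leq k}(C_p^{m})-1)+1$, sorting its terms by their $C_p^{d-m}$-component, and pulling out by pigeonhole a block of length at least $\mathsf{s}_{\leq k}(C_p^{m})$ all of whose terms share a single component $h\in C_p^{d-m}$. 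If $h=0$ this block lies in $C_p^{m}$ and the rank-$m$ bound finishes the job. If $h\neq0$ one projects the block onto $C_p^{m}$, extracts a zero-sum subsequence of length $\leq k$ there, and then upgrades it to a genuine zero-sum of $C_p^{d}$ by arranging that its length be divisible by $p$ — which there is room to do because $k>\exp(C_p^{d})=p$. Feeding in $m=3$ with $\mathsf{s}_{\leq (3p-1)/2}(C_p^3)\leq 6p-3$ gives the first bound, and $m=4$ with $\mathsf{s}_{\leq 2p}(C_p^4)\leq 6p-3$ the second.

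The base cases carry the genuine content. For $\mathsf{s}_{\leq (3p-1)/2}(C_p^3)\leq 6p-3$ I would take a sequence $S$ of length $6p-3$ over $C_p^3$, project onto quotients isomorphic to $C_p$ and $C_p^2$, and use the exact values $\eta(C_p)=p$, $\mathsf{D}(C_p^2)=2p-1$, $\eta(C_p^2)=3p-2$, $\mathsf{s}(C_p^2)=4p-3$, together with the rigidity supplied by the inverse results in Lemmas \ref{inv2} and \ref{r(G)=2}, to locate a controlled number of short zero-sums and to glue at most two of them together while keeping the total length below $(3p-1)/2$; whenever this gluing is obstructed, the inverse theorems force enough structure on $S$ to exhibit a short zero-sum directly. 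The $C_p^4$ bound is handled by the same scheme, the larger admissible length $2p$ leaving extra slack.

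The hard part, as I anticipate it, is exactly these base cases. The ceilings $(3p-1)/2$ and $2p$ are small enough that the crude strategy — extract many short zero-sums and add them up — is worthless, because summing $j$ zero-sums can inflate the length by a factor of $j$; one is therefore driven to a delicate case analysis that tracks the number of available short zero-sums and their lengths modulo $p$ and that leans on the classification of extremal sequences. Secondary technical points are making the ``adjust the length to a multiple of $p$'' step in the reduction precise, and, to get the reduction in its sharp form, supplying structural information about near-extremal blocks in the base rank.
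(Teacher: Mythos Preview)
This lemma is not proved in the paper: it is quoted from Bhowmik and Schlage-Puchta \cite{BS17} as an external result, so there is no argument here to compare your attempt against.

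On its own merits, your reduction step has a real gap. After pigeonholing, you hold a block of at least $\mathsf{s}_{\leq k}(C_p^{m})$ terms sharing a common nonzero $C_p^{d-m}$-component $h$; extracting a zero-sum of length $\ell\leq k$ in the $C_p^{m}$-projection gives a subsequence whose full sum in $C_p^{d}$ is $\ell h$, and this vanishes only when $p\mid\ell$. Your phrase ``arranging that its length be divisible by $p$ --- which there is room to do because $k>p$'' is precisely the missing content, not a formality: nothing lets you move $\ell$ into $p\mathbb{N}$ after the fact. What is actually required is a zero-sum of length in $p\mathbb{N}\cap[1,k]$ among the $C_p^{m}$-projections, and that is equivalent to a zero-sum of length $\leq k$ for the constant-first-coordinate sequence $(1,x_i)$ in $C_p\oplus C_p^{m}\cong C_p^{m+1}$. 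Hence your pigeonhole with $p^{d-m}$ classes really delivers
\[
\mathsf{s}_{\leq k}(C_p^{d})\;\leq\;p^{d-m}\bigl(\mathsf{s}_{\leq k}(C_p^{m+1})-1\bigr)+1,
\]
one rank higher than you wrote. Matching the stated bounds would then require $\mathsf{s}_{\leq(3p-1)/2}(C_p^{4})\leq 6p-3$ and $\mathsf{s}_{\leq 2p}(C_p^{5})\leq 6p-3$ as base cases, not the rank-$3$ and rank-$4$ statements you plan --- and those base cases are in any event only outlined, not proved.
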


\begin{lem} \label{col}
\begin{description}
  \item[(i)] $\mathsf{s}_{\leq 3}(C_3^3)=17$, $\mathsf{s}_{\leq 4}(C_3^3)=10$, $\mathsf{s}_{\leq 5}(C_3^3)=9$, $\mathsf{s}_{\leq 6}(C_3^3)=8$, $\mathsf{s}_{\leq 7}(C_3^3)=\mathsf{D}(C_3^3)=7. $ (\cite{BS07})
  \item[(ii)] $\mathsf{s}_{\leq 5}(C_5^3)=33$, $\mathsf{s}_{\leq 6}(C_5^3)=24$, $\mathsf{s}_{\leq 7}(C_5^3)=19$, $\mathsf{s}_{\leq 8}(C_5^3)=18$, $\mathsf{s}_{\leq 9}(C_5^3)=17$, $\mathsf{s}_{\leq 10}(C_5^3)=15$, $\mathsf{s}_{\leq 11}(C_5^3)=14$, $\mathsf{s}_{\leq 12}(C_5^3)=14$, $\mathsf{s}_{\leq 13}(C_5^3)=\mathsf{D}(C_5^3)=13.$ (\cite{S10})
  \item[(iii)] $\mathsf{s}_{\leq 2}(C_2^r)=2^{r}$; $\mathsf{s}_{\leq 3}(C_2^r)=2^{r-1}+1$; $\mathsf{s}_{\leq 4}(C_2^d)\leq 2^{(d+1)/2}+1$; $\mathsf{s}_{\leq r-k}(C_2^r)=r+2\ \text{for all}\
r-k\in \Big[\Big\lceil \frac{2r+2}{3}\Big\rceil, r\Big];$ $\mathsf{s}_{\leq 2m}(C_2^r)\leq (m+1)+(m!2^{r})^{1/m}$. (\cite{FS10,L69,WZ17})
  \item[(iv)] $\mathsf{s}_{\leq \mathsf{D}(G)-k}(G)=\mathsf{D}(G)+k$ for $r(G)=2$ and $\mathsf{D}(G)-k\in [\exp(G),\mathsf{D}(G)]$. (\cite{WZ17})
  \item[(v)] $\mathsf{s}_{\leq \mathsf{D}(G)-2}(G)=\mathsf{D}(G)+1$ for $G=C_p^r$ with $3\leq r<p$; $\mathsf{s}_{\leq \mathsf{D}(G)-p^n}(G)=\mathsf{D}(G)+p^n$ for $G=C_{p^n}^3$ with $p$ odd prime. (\cite{Z23})
\end{description}
\end{lem}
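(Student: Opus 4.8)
The statement collects known values and bounds, so the plan is to assemble them from the literature, reproving the short pieces and pointing to where the real work sits; there is no single unified argument.

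For items (i) and (ii) I would work one value of $k$ at a time over $C_3^3$ and $C_5^3$, where $\mathsf{D}(G)=3(\exp(G)-1)+1$. Each lower bound $\mathsf{s}_{\leq k}(G)\geq \ell$ comes from an explicit sequence of length $\ell-1$ with no nonempty zero-sum subsequence of length at most $k$; the natural candidates take a fixed basis, each vector to the multiplicity $\exp(G)-1$, and then pad with a short block of carefully chosen elements, in the style of the extremal sequences of Lemma~\ref{inv2}. Each matching upper bound forces a longer sequence to contain such a short zero-sum subsequence: pass to the quotient by a suitable subgroup, apply pigeonhole on the fibres together with an inductive estimate of the flavour of Lemma~\ref{colup}, and finish with a finite, in part computer-assisted, case check. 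These exact values are obtained in \cite{BS07} for $C_3^3$ and in \cite{S10} for $C_5^3$, the $C_5^3$ determination being the single hardest ingredient, since the number of extremal shapes to exclude grows quickly.

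For item (iii) I would exploit the binary special features. The equality $\mathsf{s}_{\leq 2}(C_2^r)=2^r$ is immediate: a nonempty zero-sum subsequence of length at most $2$ over $C_2^r$ is a single $0$ or a repeated element, so a sequence avoiding one is a set of distinct nonzero elements, of size at most $2^r-1$. The value $\mathsf{s}_{\leq 3}(C_2^r)=2^{r-1}+1$, whose lower bound is witnessed by an affine hyperplane missing $0$, and the bound $\mathsf{s}_{\leq 4}(C_2^d)\leq 2^{(d+1)/2}+1$ are from \cite{FS10} and \cite{L69}, proved through the Cohen--Zemor correspondence: a length-$\ell$ sequence over $C_2^r$ is the parity-check matrix of a binary code and a zero-sum subsequence of length $w$ is a codeword of weight $w$, so the question becomes one about minimum distance. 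The plateau $\mathsf{s}_{\leq r-k}(C_2^r)=r+2$ on the stated range, and the estimate $\mathsf{s}_{\leq 2m}(C_2^r)\leq(m+1)+(m!\,2^r)^{1/m}$, are from \cite{WZ17}: the plateau descends from $\mathsf{s}_{\leq\mathsf{D}(G)-1}(G)=\mathsf{D}(G)+1$, keeping the value $r+2$ so long as $r-k$ stays above $\lceil(2r+2)/3\rceil$, and the $2m$-estimate is a counting bound on the $m$-element subsets of a long sequence that sum to a fixed group element.

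Finally, item (iv) needs no new argument: since $r(G)=2$ forces $\mathsf{D}(G)=\mathsf{D}^*(G)$, it is exactly Lemma~\ref{r(G)=2} restated. Item (v) is quoted from \cite{Z23}: there $\mathsf{s}_{\leq\mathsf{D}(G)-2}(C_p^r)=\mathsf{D}(G)+1$ for $3\leq r<p$ says that near the top of the range one may drop one further step in the length bound without changing the value once $p$ is large relative to the rank, while $\mathsf{s}_{\leq\mathsf{D}(G)-p^n}(C_{p^n}^3)=\mathsf{D}(G)+p^n$ realizes with equality the general inequality $\mathsf{s}_{\leq\mathsf{D}(G)-d}(G)\leq\mathsf{D}(G)+d$ at $d=\exp(G)$, by matching an upper-bound estimate against an explicit extremal sequence of the type above. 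Thus the main obstacle is not conceptual but the genuinely different toolkits required for items (i)--(ii) versus (iii).
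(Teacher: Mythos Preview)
Your proposal is essentially correct and matches the paper's treatment: Lemma~\ref{col} is stated in the paper purely as a compilation of results from the cited references \cite{BS07,S10,FS10,L69,WZ17,Z23}, with no proof given beyond the citations themselves. Your sketches of the underlying arguments (the coding-theoretic correspondence for $C_2^r$, the explicit extremal sequences for lower bounds, the computer-assisted case analysis for $C_3^3$ and $C_5^3$) are accurate summaries of what happens in those references, and go somewhat beyond what the paper itself provides.
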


\begin{lem}[\cite{RT18}, Theorem 1.1] \label{colp}
Let $H$ be a finite abelian $p$-group with exponent $\exp(H) = p^m$ for some
integer $m\geq 1$ and for a prime number $p > 2r(H)$. Suppose
that $\mathsf{D}(H)-1 = k_0p^m + t$ for some integers $k_0\geq 1$
and $0\leq t\leq p^m-1$. Let $G = C_{p^n} \oplus H$ be a finite abelian $p$-group for some integer $n$
satisfying $p^n\geq 2(\mathsf{D}(H)-1)$. Let $k$ be any integer satisfying $\mathsf{D}(H)-1-k= ap^m +t'$ for some integer
$a$ satisfying $0\leq a\leq k_0-1$ and for some integer $t'$ satisfying $0\leq t'\leq t$. Then, we have
$$\mathsf{s}_{\leq \mathsf{D}(G)-k}(G)\leq\mathsf{D}(G)+k.$$
\end{lem}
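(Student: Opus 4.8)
\emph{Proof proposal.} Write $G=C_{p^n}\oplus H$, fix a decomposition so that the $C_{p^n}$-factor is $\langle e\rangle$ with $\mathrm{ord}(e)=p^n$, and let $\psi\colon G\to\langle e\rangle$, $\varphi\colon G\to H$ be the two coordinate projections. Since $G$ and $H$ are $p$-groups, Olson's theorem gives $\mathsf D(H)=\mathsf D^*(H)$ and $\mathsf D(G)=\mathsf D^*(G)=(p^n-1)+\mathsf D(H)$; also $p^n\ge 2(\mathsf D(H)-1)>\mathsf D(H)>p^m$, so $\exp(G)=p^n$. Setting $\ell:=\mathsf D(H)-1-k=ap^m+t'$, the hypotheses $0\le a\le k_0-1$, $0\le t'\le t$ translate to $0\le\ell\le\mathsf D(H)-1-p^m$, and one computes $\mathsf D(G)-k=p^n+\ell$ and $\mathsf D(G)+k=p^n+2(\mathsf D(H)-1)-\ell$. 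So the claim to prove is: every sequence $S$ over $G$ with $|S|=\mathsf D(G)+k$ has a nonempty zero-sum subsequence of length at most $p^n+\ell$. I also record that, writing $\mathsf D(H)-1=k_0p^m+t$ with $0\le t<p^m$, one has $k_0p^m\le\mathsf D(H)-1=\mathsf D^*(H)-1\le r(H)(p^m-1)$, hence $k_0\le r(H)-1<p/2$; this is the point where $p>2r(H)$ is spent, and it makes $\mathsf D(H)\le(k_0+1)p^m$ small relative to $p^n\ge2k_0p^m$.

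The plan is a contradiction argument. Assume $S$ has no nonempty zero-sum subsequence of length $\le p^n+\ell$. \textbf{Step 1 (multiplicities).} If some $g\in G$ occurs in $S$ at least $\mathrm{ord}(g)$ times, then $g^{\mathrm{ord}(g)}$ is a zero-sum subsequence of length $\mathrm{ord}(g)\le\exp(G)=p^n\le p^n+\ell$, a contradiction; so every element occurs fewer than $\mathrm{ord}(g)\le p^n$ times. \textbf{Step 2 (a shortest zero-sum).} Since $|S|=\mathsf D(G)+k>\mathsf D(G)$, pick a shortest nonempty zero-sum subsequence $U\mid S$; then $U$ is a minimal zero-sum sequence, $p^n<p^n+\ell+1\le|U|\le\mathsf D(G)=p^n+\mathsf D(H)-1<2p^n$, and $|SU^{-1}|\le\mathsf D(G)+k-(p^n+\ell+1)=2k-1=2(\mathsf D(H)-1-\ell)-1$.

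\textbf{Step 3 (two factorizations of $U$).} Factor $\varphi(U)=B_1\cdots B_q$ into minimal zero-sum sequences over $H$ and let $U=V_1\cdots V_q$ be the induced factorization, so $\varphi(V_i)=B_i$, $|V_i|\le\mathsf D(H)$, and $\sigma(V_i)=a_ie$ for some $a_i\in\Z/p^n\Z$. Minimality of $U$ forces $(a_i)_{i=1}^q$ to be a minimal zero-sum sequence over $\Z/p^n\Z$ (if $\sum_{i\in I}a_i\equiv0$ for a proper nonempty $I$, then $\prod_{i\in I}V_i$ has zero $\varphi$-image and zero $\psi$-image, hence is a proper zero-sum subsequence of $U$); consequently $q\le\mathsf D(\Z/p^n\Z)=p^n$, while $p^n<|U|\le q\,\mathsf D(H)$ together with $\mathsf D(H)<p^n$ forces $q\ge2$, and $q$ grows as $|U|$ approaches $\mathsf D(G)$. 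Dually, $\psi(U)$ is zero-sum over $\Z/p^n\Z$ of length $|U|>p^n=\mathsf D(\Z/p^n\Z)$, so it has a minimal zero-sum subsequence $P$, $|P|\le p^n$, and $\psi(U)P^{-1}$ is again zero-sum; writing $U=WW'$ accordingly with $|W|\le p^n$ gives $\sigma(W)=-\sigma(W')=:h\neq0$ (else $U$ is not minimal), and in the extremal case $|W|=p^n$ the classification of minimal zero-sum sequences of maximal length over a cyclic group forces $P=g^{p^n}$ for a generator $g$, i.e.\ after an automorphism $W=\prod_{j=1}^{p^n}(e+h_j)$ with $\sum_j h_j=h$. \textbf{Step 4 (manufacturing a shorter zero-sum).} The sequence $\varphi(SW^{-1})$ over $H$ has length $|S|-|W|\ge\mathsf D(H)-1+k\ge\mathsf D(H)+p^m-1$, so it has a minimal zero-sum subsequence $\varphi(T)$, $T\mid SW^{-1}$, and then $\sigma(T)=ce$ for some $c\in\Z/p^n\Z$; if $c=0$ then $T$ is a zero-sum subsequence of $S$ of length $\le\mathsf D(H)\le p^n+\ell$, a contradiction, so $c\neq0$. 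One then completes $T$ to a zero-sum subsequence by appending a sub-multiset $I$ of the summands $e+h_j$ of $W$ with $|I|\equiv-c\pmod{p^n}$ and $\sum_{j\in I}h_j=0$ in $H$: here one uses that, since $p^n\ge 2(\mathsf D(H)-1)$ and all minimal zero-sum sequences over $H$ have length $\le\mathsf D(H)<(k_0+1)p^m$, the sequence $(h_j)_{j=1}^{p^n}$ itself splits into many disjoint minimal $H$-zero-sum blocks of bounded length, so the set of sizes $|I|$ realizable with $\sum_{j\in I}h_j=0$ is dense enough to also arrange $|T|+|I|<|U|$; this contradicts the minimality in Step 2. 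The case $|W|<p^n$ (where then both $|W|$ and $|W'|$ are substantial, and where the factorization $U=V_1\cdots V_q$ with its minimal zero-sum sequence $(a_i)$ over $\Z/p^n\Z$ and the $\le2k-1$ extra terms of $SU^{-1}$ must be brought in) is handled by a parallel but more elaborate argument.

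\textbf{Expected main obstacle.} The difficulty lies entirely in Step 4: one must recombine while keeping strict control on \emph{lengths}. A naive ``extract short $H$-zero-sum subsequences of $S$ and recombine their sums over $C_{p^n}$'' argument is hopeless, because $|S|$ exceeds $\mathsf D(G)$ only by $k<\mathsf D(G)-\exp(G)$, so one cannot produce $\mathsf D(C_{p^n})=p^n$ disjoint blocks; the recombination has to be carried out inside the single long block $W$ (or inside $W$, $W'$ and the short remainder $SU^{-1}$ jointly). Making the subset-sum bookkeeping inside $W$ flexible enough is exactly where the bound $k_0<p/2$ from $p>2r(H)$ does the work, and organizing the case distinction according to how $|W|+|W'|=|U|$ splits and whether $|W|$ attains $p^n$ is the bulk of the proof. (Alternatively one could try to induct on $r(H)$, peeling off one $C_{p^m}$-factor and using the known rank-two results of Lemma~\ref{r(G)=2} together with, at the base $r(H)=2$, the rank-three results of Lemma~\ref{col}(v); but the same length-control obstruction reappears when one tries to recombine over the peeled-off factor, so the work is essentially the same.)
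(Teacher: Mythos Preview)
This lemma is quoted verbatim from \cite{RT18} (it is Theorem~1.1 there) and is \emph{not} proved in the present paper; it is imported as a known result. So there is no in-paper proof to compare your proposal against.

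On the substance of your outline: the setup and Steps~1--3 are fine, but Step~4 does not go through as written, and you yourself flag it as ``the bulk of the proof.'' In your extremal case $|W|=p^n$ you need a subset $I\subset[1,p^n]$ satisfying simultaneously (a) $|I|\equiv -c\pmod{p^n}$, (b) $\sum_{j\in I}h_j=0$ in $H$, and (c) $|T|+|I|<|U|$. Since $1\le|I|\le p^n$ and $c\in[1,p^n-1]$, condition (a) pins $|I|$ to the single value $p^n-c$; condition (c) then reads $|T|-c<|U|-p^n$, and as $|U|$ may be as small as $p^n+\ell+1$ while $|T|$ may be as large as $\mathsf D(H)$, this forces $c>\mathsf D(H)-\ell-1=k$, which you have no mechanism to arrange for a generic choice of $T$. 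Even waiving (c), the claim that among all subsets of the \emph{prescribed} size $p^n-c$ of $(h_j)_{j=1}^{p^n}$ there is one with zero $H$-sum is precisely the hard subset-sum problem; the phrase ``the set of sizes $|I|$ realizable with $\sum_{j\in I}h_j=0$ is dense enough'' asserts the conclusion without supplying a reason. The non-extremal case $|W|<p^n$ is the generic one, and you dispose of it with ``a parallel but more elaborate argument,'' which is no argument at all.

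In short, your ``Expected main obstacle'' paragraph is accurate: Step~4 is essentially the entire content of the lemma, and the sketch does not make progress on it. A structural approach of this kind is not obviously hopeless, but it is substantially harder than you indicate; the actual proof in \cite{RT18} goes a different route, closer in spirit to the counting-mod-$p$ machinery (the congruence $\mathsf N_0^+(S)\equiv\mathsf N_0^-(S)\pmod p$ together with Lucas' theorem) that this paper itself develops in Section~4, with the hypothesis $p>2r(H)$ used to control the relevant binomial coefficients modulo $p$.
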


From the above results, we can find that there may exist some $k_G\in [\exp(G),\mathsf{D}(G)-1]$ such that $\mathsf{s}_{\leq \mathsf{D}(G)-k}(G)\leq \mathsf{D}(G)+k$ for every finite abelian groups $G$, where $\mathsf{D}(G)-k\geq k_G$.
For this, we obtain the following:

We firstly consider the lower bounds of $\mathsf{s}_{\leq k}(G)$. The structures of sequences are similar with Lemma \ref{inv2}.

\begin{thm} \label{lowerCnr}
Let $G=C_n^r$ with $n,r\in \mathbb{N}_{\geq 2}$ and $k\in [0,n-1]$ be an integer.
Then $\mathsf{s}_{\leq 2n-1-k}(G)\geq 2^{r-1}(n-1)+1+k$.
\end{thm}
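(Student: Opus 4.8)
The plan is to construct, for each $k \in [0, n-1]$, an explicit sequence $S$ over $G = C_n^r$ of length $2^{r-1}(n-1) + k$ that has no nonempty zero-sum subsequence of length at most $2n-1-k$; this forces $\mathsf{s}_{\leq 2n-1-k}(G) \geq 2^{r-1}(n-1) + 1 + k$. The construction should mimic the extremal sequences of Lemma \ref{inv2}, but lifted from $C_n^2$ to $C_n^r$ by "spreading out'' the copies of $e_1$. Concretely, fix a basis $(e_1, \ldots, e_r)$ of $G$. The core idea is to take a long block built from $e_2, \ldots, e_r$ and the $2^{r-2}$ sums of subsets of $\{e_2,\ldots,e_r\}$ appearing in the standard lower-bound construction for $\mathsf{D}(C_n^{r-1}) $ and $\eta$-type problems, each with multiplicity $n-1$, contributing $2^{r-2}(n-1)$ terms that generate a subgroup isomorphic to $C_n^{r-1}$ with no short zero-sum subsequence; then adjoin a second such block twisted by $e_1$, and finally insert $k$ extra terms of the form $(x\cdot e_1 + (\text{something}))$ exactly as in cases (2)--(4) of Lemma \ref{inv2}. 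The multiplicities are chosen so the total length is $2^{r-1}(n-1) + k$.

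The key steps, in order, are: (i) write down the sequence $S$ precisely — I expect something like $S = \big(\prod_{\emptyset \neq I \subseteq \{2,\ldots,r\}} (\textstyle\sum_{i\in I} e_i)^{n-1}\big)$ is not quite right dimensionally, so instead use the recursive doubling: let $T_1$ be the length-$(n-1)$ sequence $e_r^{n-1}$ over $C_n$, and recursively set $T_{j+1} = T_j \cdot (T_j + e_{r-j}^{?})$-style products so that $|T_r| = 2^{r-1}(n-1)$ and $T_r$ has no nonempty zero-sum subsequence of length $\leq 2n-2$ (this is the $k=0$ skeleton), then perturb by $k$ terms involving a coset shift to kill zero-sums up to length $2n-1-k$; (ii) verify that $S$ has no nonempty zero-sum subsequence of length $\leq 2n-1-k$, which is the heart of the argument: project onto each coordinate and onto the quotient $G/\langle e_2,\ldots,e_r\rangle \cong C_n$, and use a counting/pigeonhole argument on how many terms a putative short zero-sum subsequence can use from each "layer''; (iii) read off the length $2^{r-1}(n-1)+k$ and conclude.

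The main obstacle will be step (ii): proving the nonexistence of short zero-sum subsequences in the lifted sequence. In the rank-two case this is handled by the delicate combinatorial analysis behind Lemma \ref{inv2}, and lifting it to rank $r$ requires controlling the interaction between the $2^{r-1}$ distinct "group elements'' appearing (each with multiplicity $n-1$, roughly) and the $k$ perturbation terms simultaneously. I would argue by induction on $r$: a zero-sum subsequence $T$ of $S$ of length $\leq 2n-1-k$ projects, modulo $\langle e_r\rangle$ say, to a zero-sum subsequence of the rank-$(r-1)$ construction; the inductive hypothesis bounds how $T$ can meet the two halves, and then the $e_r$-coordinate equation, together with the constraint that the total length is small, yields a contradiction via a congruence argument modulo $n$ (exactly as the condition $x_1 + \cdots + x_n \equiv 1 \bmod n$ functions in Lemma \ref{inv2}(2)). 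The bookkeeping for the congruences when $2 \leq k \leq n-2$ versus the boundary values $k = 0, 1, n-1$ will need to be separated into cases, paralleling Lemma \ref{inv2}, but no single case should be conceptually hard once the recursive construction is pinned down.
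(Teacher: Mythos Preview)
Your approach is essentially the same as the paper's: induction on $r$, with the inductive step given by a doubling construction $S = g_0^k \prod_{i=1}^{m} g_i^{n-1}(g_i+e_r)^{n-1}$ built from the rank-$(r-1)$ extremal sequence $S_1 = g_0^k \prod_i g_i^{n-1}$, and the verification done by projecting a putative short zero-sum $T$ onto the $e_r$-coordinate and then back into $S_1$.

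Two remarks where the paper is simpler than you anticipate. First, the $k$ ``perturbation terms'' are not freshly designed at each step; they are simply the $g_0^k$ already present in the rank-$(r-1)$ sequence and are carried along unchanged (only the $g_i^{n-1}$ part is doubled). Consequently there is \emph{no} case split on $k$ in the inductive step---all the $k$-dependent work is absorbed into the base case $r=2$, which is Lemma~\ref{inv2} verbatim. Second, in the projection step you should watch for a subtlety you do not mention: after replacing each $g_i+e_r$ by $g_i$, the resulting zero-sum sequence $T'$ need not be a subsequence of $S_1$ because multiplicities of the $g_i$ may now exceed $n-1$. The paper handles this by writing $T' = (T'_1)^n \cdot T'_2 g_0^{k_0}$ with $\mathsf v_g(T'_2 g_0^{k_0})<n$, so that $T'_2 g_0^{k_0}\mid S_1$; the leftover case $T'_2 g_0^{k_0}$ empty forces $I_1=I_2$ and $u_i+v_i=n$, whence $|T|=|I|n\geq 2n$ unless $|I|=1$, which is quickly ruled out by the congruence $\sum u_i\equiv 0\pmod n$. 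With these two simplifications your outline becomes exactly the paper's proof.
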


\textbf{Remark:} By Lemma \ref{col} Part (iii) and (iv), one can easily see that the lower bound is sharp for $n=2$ or $r=2$.

\begin{thm}\label{lower}
Let $G$ be a finite abelian group with $r(G)\geq 2$. If
$\exp(G)\leq \mathsf{D}^*(G)-k\leq 2\exp(G)-1$, then $\mathsf{s}_{\leq \mathsf{D}^*(G)-k}(G)\geq \mathsf{D}^*(G)+k$.

In particular, if $G$ is a finite abelian $p$-group with $r(G)\geq 2$ and $\mathsf{D}(G)\leq 2\exp(G)-1$, then $\mathsf{s}_{\leq \mathsf{D}(G)-k}(G)\geq \mathsf{D}(G)+k$  for $\exp(G)\leq \mathsf{D}(G)-k\leq \mathsf{D}(G)$.
\end{thm}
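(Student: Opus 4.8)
The plan is to establish the lower bound by an explicit construction. Since $\mathsf{s}_{\le \ell}(G)\ge N+1$ is equivalent to the existence of a sequence $S$ over $G$ of length $N$ with no non-empty zero-sum subsequence of length at most $\ell$, and since for $r(G)=2$ the assertion is already contained in Lemma \ref{r(G)=2} (in rank two, $\mathsf{D}(G)=\mathsf{D}^*(G)$), I may assume $r=r(G)\ge 3$. Put $m:=\exp(G)$ and $d:=\mathsf{D}^*(G)-k$; it suffices to produce a sequence $S$ over $G$ with $|S|=\mathsf{D}^*(G)+k-1$ whose shortest non-empty zero-sum subsequence has length at least $d+1$. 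Fix a basis $(e_1,\dots,e_r)$ of $G$ with $\ord(e_i)=n_i$ and $1<n_1\mid\cdots\mid n_r=m$; for $w\in G$ write $w=\sum_i v_ie_i$ with $v_i\in[0,n_i-1]$, set $\|w\|:=\sum_iv_i$, and note the identity $\|w\|+\|{-w}\|=\sum_{i\in\operatorname{supp}(w)}n_i$.

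I would search for $S$ in the form $S=S_0\cdot W$, with $S_0=\prod_{i=1}^r e_i^{n_i-1}$ the standard zero-sum-free sequence of length $\mathsf{D}^*(G)-1$ and $W$ a sequence of length $k$. The structural heart of the argument is the following: because $S_0$ is zero-sum-free and coordinate vectors are unique, every non-empty zero-sum subsequence of $S$ is of the form $\bigl(\prod_i e_i^{v_i}\bigr)\cdot J$, where $J$ runs over the non-empty sub-multisets of $W$ and $\sum_i v_ie_i=-\sigma(J)$, and its length equals $|J|+\|{-\sigma(J)}\|$. Thus the problem reduces to choosing $W$ so that $W$ is zero-sum-free and
\[
|J|+\|{-\sigma(J)}\|\ \ge\ d+1\qquad\text{for every non-empty sub-multiset }J\text{ of }W .
\]
By the hypothesis $m\le d\le 2m-1$ I may write $d=m+u$ with $0\le u\le m-1$, and then $k=\bigl(\sum_{i<r}(n_i-1)\bigr)-u$; this last identity, together with $k\le\sum_{i<r}(n_i-1)$, governs how large a sequence $W$ of the required shape must be.

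For $u=0$ I would take $W=\prod_{i=1}^{r-1}(e_i+e_r)^{n_i-1}$; a short computation with the displayed identity and the divisibilities $n_1\mid\cdots\mid n_r$ shows that for a sub-multiset $J$ with multiplicities $(c_i)_{i<r}$ one has $|J|+\|{-\sigma(J)}\|=\sum_{i<r,\,c_i\neq 0}n_i+\bigl((-|J|)\bmod m\bigr)\ge m+1=d+1$, while zero-sum-freeness is immediate. For $u\ge1$ this family no longer suffices (a single direction $(e_i+e_r)^{n_i-1}$ already meets only the value $m+1<d+1$), so I would instead build $W$ out of the larger collection of diagonal directions $e_i+e_j$ with $1\le i<j\le r$, each used with a capped multiplicity (of order $n_i-u-1$ when $n_j=m$, with a modification in the directions where $n_j<m$) and with the multiplicities summing to $k$; the count $k=\sum_{i<r}(n_i-1)-u$ can be distributed in this way. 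The verification of the displayed inequality then splits into two cases: $(i)$ $J$ supported on a single diagonal direction, handled by the multiplicity cap; and $(ii)$ $J$ meeting at least two directions, where $\sigma(J)$ spreads over many coordinates and $\|\sigma(J)\|+\|{-\sigma(J)}\|=\sum_{\operatorname{supp}(\sigma(J))}n_i$ pushes $|J|+\|{-\sigma(J)}\|$ above $d+1$; zero-sum-freeness of $W$ follows from the independence of the chosen directions. The hard part — and the step I expect to be the main obstacle — is precisely case $(ii)$ together with the correct choice of the capped multiplicities for intermediate $u$: one must make sure that no diagonal direction repeated up to its cap, and no combination of two or three of them, "wraps around" so efficiently as to produce a zero-sum subsequence of length $\le d=m+u$.

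For the "in particular" clause, recall that a finite abelian $p$-group satisfies $\mathsf{D}(G)=\mathsf{D}^*(G)$ (Olson); hence the hypotheses $\exp(G)\le\mathsf{D}(G)-k\le\mathsf{D}(G)$ and $\mathsf{D}(G)\le 2\exp(G)-1$ yield $\exp(G)\le\mathsf{D}^*(G)-k\le 2\exp(G)-1$, and the first part of the theorem gives $\mathsf{s}_{\le\mathsf{D}(G)-k}(G)=\mathsf{s}_{\le\mathsf{D}^*(G)-k}(G)\ge\mathsf{D}^*(G)+k=\mathsf{D}(G)+k$.
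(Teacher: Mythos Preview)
Your $u=0$ construction is correct, and up to the automorphism $e_i\mapsto -e_i$ for $i<r$ it coincides with the paper's sequence at $u=0$; the verification via $|J|+\|{-\sigma(J)}\|=\sum_{c_i\ne 0}n_i+\bigl((-|J|)\bmod m\bigr)$ goes through. The deduction of the ``in particular'' clause from the first part is also fine.

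The gap is exactly where you locate it, at $u\ge 1$. Keeping $S_0=\prod_i e_i^{n_i-1}$ intact and trying to assemble $W$ from several diagonal directions $e_i+e_j$ with capped multiplicities is not only unverified in your proposal but genuinely awkward: the single-family caps $a_i\le n_i-u-1$ already give total capacity $\sum_{i<r}(n_i-1)-(r-1)u<k$ for $r\ge 3$, so you are forced to bring in diagonals with $j<r$, and then the ``independence'' you invoke for zero-sum-freeness of $W$ is no longer available (for instance $e_1+e_2$, $e_1+e_3$, $e_2+e_3$ are linearly dependent). Even where this can be patched, your case $(ii)$ would require a case analysis that grows with $r$ and $u$.

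The paper's fix is much simpler: modify $S_0$ rather than $W$. Keep the same $W$ as in your $u=0$ case (up to sign, $\prod_{i<r}(e_r-e_i)^{n_i-1}$) for \emph{every} $u$, and instead lower the multiplicity of $e_r$ from $n_r-1$ to $x:=n_r-1-u=\exp(G)+k-\mathsf D^*(H)$. The resulting sequence
\[
S=e_r^{\,x}\prod_{i=1}^{r-1}e_i^{\,n_i-1}(e_r-e_i)^{\,n_i-1}
\]
has length $\mathsf D^*(G)+k-1$. For any nonempty zero-sum subsequence $T$, the $e_i$-coordinate ($i<r$) forces $e_i$ and $e_r-e_i$ to occur with equal multiplicity $v_i$, so $T=e_r^{x_1}\prod_{i\in I}e_i^{v_i}(e_r-e_i)^{v_i}$ with $\sigma(T)=(x_1+\sum_{i\in I} v_i)e_r$; zero-sum gives $n_r\mid x_1+\sum v_i$, whence
\[
|T|=x_1+2\sum_{i\in I} v_i=2\Bigl(x_1+\sum_{i\in I} v_i\Bigr)-x_1\ge 2n_r-x=n_r+u+1=d+1.
\]
This handles all $u\in[0,m-1]$ uniformly, with no case distinction and no combinatorics beyond one divisibility.
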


\textbf{Remark:} By the above result one can easily see that $\mathsf{s}_{\leq \mathsf{D}(G)-k}(G)=\mathsf{D}(G)+k$ in Lemma \ref{colp}.

For the finite abelian group $G$, we consider the case $k=2$.

\begin{thm}\label{D(G)-2}
Let $G$ be a finite abelian group with $r(G)\geq 2$. If $G\not\cong C_2^3$ or $C_2^4$ and $\mathsf{D}(G)-2\geq \exp(G)$,
then $\mathsf{s}_{\leq \mathsf{D}(G)-2}(G)\leq \mathsf{D}(G)+2$.

In particular, if $\mathsf{D}(G)=\mathsf{D}^*(G)$ and $\exp(G)\geq \frac{\mathsf{D}(G)-1}{2}$,
then $\mathsf{s}_{\leq \mathsf{D}(G)-2}(G)=\mathsf{D}(G)+2$.
\end{thm}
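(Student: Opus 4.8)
The plan is to prove the upper bound $\mathsf{s}_{\leq \mathsf{D}(G)-2}(G)\leq \mathsf{D}(G)+2$ by contradiction: suppose $S$ is a sequence over $G$ of length $|S| = \mathsf{D}(G)+2$ having no nonempty zero-sum subsequence of length at most $\mathsf{D}(G)-2$. Since $|S| > \mathsf{D}(G)$, $S$ certainly has nonempty zero-sum subsequences; by hypothesis every such subsequence has length $\mathsf{D}(G)-1$, $\mathsf{D}(G)$, $\mathsf{D}(G)+1$, or $\mathsf{D}(G)+2$. The idea is to exploit complementation: if $T\mid S$ is zero-sum of length $\ell$, then $ST^{-1}$ need not be zero-sum, but combining this with the known value $\mathsf{s}_{\leq \mathsf{D}(G)-1}(G)=\mathsf{D}(G)+1$ (the lemma of Wang--Zhuang quoted above) applied to suitable subsequences of length $\mathsf{D}(G)+1$, we should force the structure of $S$ to be very rigid. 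Concretely, first I would show that $S$ contains many zero-sum subsequences of length exactly $\mathsf{D}(G)-1$: remove any two terms from $S$ to get a subsequence $S'$ of length $\mathsf{D}(G)$, which by $\mathsf{s}_{\leq\mathsf{D}(G)-1}(G)<\mathsf{D}(G)+1$... actually $\mathsf{s}_{\leq\mathsf{D}(G)-1}(G)=\mathsf{D}(G)+1$ means a sequence of length $\mathsf{D}(G)$ may fail, so instead remove one term to get $S''$ of length $\mathsf{D}(G)+1$, which must have a zero-sum subsequence of length at most $\mathsf{D}(G)-1$, hence of length exactly $\mathsf{D}(G)-1$.

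The next step is a counting/intersection argument on these length-$(\mathsf{D}(G)-1)$ zero-sum subsequences. For each term $g$ of $S$, the sequence $Sg^{-1}$ has length $\mathsf{D}(G)+1$ and thus contains a zero-sum subsequence $T_g$ of length $\mathsf{D}(G)-1$; its complement in $Sg^{-1}$ has length $2$, so $Sg^{-1} = T_g \cdot h_1 h_2$ for some terms $h_1,h_2$. Then $T_g \cdot h_1 h_2 \cdot g = S$ and one analyzes the three two-term "leftover" pairs $\{g,h_1\}$, $\{g,h_2\}$, $\{h_1,h_2\}$: adding $T_g$ to any complementary pair that sums to $0$ would give a zero-sum subsequence of $S$ of length $\leq \mathsf{D}(G)+1$, and in fact of length $\mathsf{D}(G)-1$ or $\mathsf{D}(G)+1$. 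I would push this to conclude that $S$ has the form $W\cdot a b c$ where $W$ is a zero-sum of length $\mathsf{D}(G)-1$ and $a,b,c$ satisfy strong relations (e.g. $a+b+c=0$ forced by $|S|$ and the zero-sum subsequences present, since $S$ itself or $S$ minus a short piece must be handled). At that point the excluded small cases appear: when $\exp(G)$ is small relative to $\mathsf{D}(G)$ the group has large rank and $C_2^3,C_2^4$ are exactly where the rigidity argument can genuinely fail, which is why they are excepted; one checks them (or cites Lemma \ref{col}(iii), which gives $\mathsf{s}_{\leq r-2}(C_2^r)=r+2$ only for $r-2\geq\lceil(2r+2)/3\rceil$, i.e. $r\geq 8$, and smaller $r$ separately) to see the exception is real and that all other groups survive.

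For the "in particular" clause, assume additionally $\mathsf{D}(G)=\mathsf{D}^*(G)$ and $\exp(G)\geq \frac{\mathsf{D}(G)-1}{2}$. The upper bound $\mathsf{s}_{\leq\mathsf{D}(G)-2}(G)\leq\mathsf{D}(G)+2$ is the part just proved (and the hypothesis $\exp(G)\geq\frac{\mathsf{D}(G)-1}{2}>0$ certainly gives $\mathsf{D}(G)-2\geq\exp(G)$ once $\exp(G)\geq 3$, with the tiny residual cases $\exp(G)=2$ ruled out by the excluded groups or checked directly). For the matching lower bound I would invoke Theorem \ref{lower}: the condition $\exp(G)\geq\frac{\mathsf{D}(G)-1}{2}$ rearranges to $\mathsf{D}(G)\leq 2\exp(G)-1$ (using $\mathsf{D}(G)=\mathsf{D}^*(G)$), so with $k=2$ we have $\exp(G)\leq\mathsf{D}^*(G)-2\leq 2\exp(G)-1$ provided $\mathsf{D}(G)-2\geq\exp(G)$, and Theorem \ref{lower} yields $\mathsf{s}_{\leq\mathsf{D}(G)-2}(G)\geq\mathsf{D}(G)+2$. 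Combining the two inequalities gives equality.

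The main obstacle I anticipate is the rigidity step in the second paragraph: turning "every nonempty zero-sum subsequence of $S$ has length in $\{\mathsf{D}(G)-1,\dots,\mathsf{D}(G)+2\}$" into an explicit near-classification of $S$, and in particular controlling the interaction of the many length-$(\mathsf{D}(G)-1)$ zero-sum subsequences $T_g$ across different choices of $g$ — their pairwise "symmetric differences" must be tightly constrained, and ruling out all configurations except the genuinely exceptional $C_2^3,C_2^4$ (and verifying those two by hand or via Lemma \ref{col}(iii) and direct computation) is where the real work lies.
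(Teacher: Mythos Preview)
Your setup and your treatment of the ``in particular'' clause are both correct and match the paper: one assumes a sequence $S$ of length $\mathsf{D}(G)+2$ with no zero-sum subsequence of length $\leq \mathsf{D}(G)-2$, uses $\mathsf{s}_{\leq \mathsf{D}(G)-1}(G)=\mathsf{D}(G)+1$ to write $S=T\boldsymbol{\cdot} g_1g_2g_3$ with $T$ minimal zero-sum of length $\mathsf{D}(G)-1$, and for the equality one combines the upper bound with Theorem~\ref{lower}.

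The gap is exactly where you say it is, and your one concrete guess there points the wrong way. You suggest that perhaps $a+b+c=0$ is forced; in fact $g_1+g_2+g_3=0$ is \emph{impossible}, since then $g_1g_2g_3$ would be a zero-sum subsequence of length $3\leq \mathsf{D}(G)-2$ (the paper first reduces to $\mathsf{D}(G)\geq 6$). The paper's route through the rigidity step is quite different from the ``compare all the $T_g$ and their leftover pairs'' idea you sketch. First one shows (Lemma~\ref{g2}, via an iterated replacement $g_1(2g-g_1)\mapsto g^2$ that eventually collapses the support into $\{g,2g\}$ and contradicts $r(G)\geq 3$ through Lemma~\ref{rank2}) that $g_1,g_2,g_3$ must be pairwise distinct. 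Then (Lemma~\ref{disg}) one proves that for \emph{each} pair $\{i,j\}\subset\{1,2,3\}$ the elements of $T$ split into disjoint pairs $\{a,\,g_i+g_j-a\}$ plus possibly the single element $g_i+g_j$, and that all elements of $S$ are distinct. Comparing the three pairings shows $T+\{g_i-g_j\}=T$ for all $i,j$, so by Lemma~\ref{Stab} the set $T$ is a union of cosets of $H=\langle g_1-g_2,\,g_1-g_3\rangle$. One then ``compresses'' $T$ along these cosets to a very short minimal zero-sum sequence $T'$ supported on $\{2g_1,4g_1\}$ (or $\{2g_1,2g_1+e\}$), reads off $\ord(2g_1)$, and in each remaining case exhibits an explicit zero-sum subsequence of length $<\mathsf{D}(G)-1$. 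None of this structure is visible from the symmetric-difference heuristic you propose, and I do not see how that heuristic alone would produce the coset decomposition that actually drives the contradiction.
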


\textbf{Remark:} For $G=C_2^3$, we have that $\mathsf{D}(G)=4$ and $\mathsf{s}_{\leq 2}(G)=8$.
For $G=C_2^4$, we have that $\mathsf{D}(G)=5$ and $\mathsf{s}_{\leq 3}(G)=2^3+1=9$. It follows since $\mathsf{s}_{\leq 3}(C_2^r)=2^{r-1}+1$.

If $G$ is a finite abelian $p$-group, we can get the better results.

\begin{thm} \label{leqlp}
Let $G$ be a finite abelian $p$-group and  $k=c_1p^{t+1}+d\in [\exp(G)+1,\mathsf{D}(G)]$ with $d\in [0,p-1]$, $t\geq 0$ and $c_1\in [1,p-1]$.
Let $S$ be a sequence over $G$ of length $2\mathsf{D}(G)-k+1$.
Suppose that $2k-\mathsf{D}(G)\geq p+d-v$ and $2\mathsf{D}(G)-2k+1<\frac{p-1}{2}p^{t+1}$.
If $\binom{\mathsf{D}(G)}{k-1}\not\equiv 0 \mod p$, then $S$ has a nonempty zero-sum subsequence $T$ with $|T|\leq k-1$.
\end{thm}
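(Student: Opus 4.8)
The plan is to argue by contradiction: assume $S$ has no nonempty zero-sum subsequence of length at most $k-1$, and derive that the binomial coefficient $\binom{\mathsf{D}(G)}{k-1}$ must be divisible by $p$. The natural tool here is the polynomial/Davenport-type counting of zero-sum subsequences of a fixed length via group algebra or character sums, in the spirit of the Olson--Chevalley--Warning style arguments used by Bhowmik--Schlagenhauf \cite{BS17} and Roy--Thangadurai \cite{RT18}. Concretely, let $\ell = |S| = 2\mathsf{D}(G)-k+1$ and count (modulo $p$) the number $N_j$ of subsequences of $S$ of length $j$ that are zero-sum. Because $\mathsf{D}(G)$ is the Davenport constant, every subsequence of length $\geq \mathsf{D}(G)$ contains a nonempty zero-sum subsequence; combined with the standing assumption that no zero-sum subsequence has length in $[1,k-1]$, every minimal nonempty zero-sum subsequence of $S$ has length in $[k,\mathsf{D}(G)]$.

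The key step is to set up the counting identity. First I would form, over $\mathbb{F}_p$, the generating function $\sum_{T\mid S} x^{|T|}\prod_{g\in T}(\text{indicator that }\sigma(T)=0)$, realized as a coefficient extraction from $\prod_{i=1}^{\ell}(1+x\,[g_i])$ inside the group algebra $\mathbb{F}_p[G]$, and then project onto the coefficient of the identity element of $G$ by averaging, or equivalently use the fact that the number of zero-sum subsequences of each length, read modulo $p$, is governed by the vanishing of certain symmetric functions. The hypothesis $2\mathsf{D}(G)-2k+1 < \frac{p-1}{2}p^{t+1}$ together with $2k-\mathsf{D}(G)\geq p+d-v$ should be exactly what is needed to ensure that the "error terms" — contributions from zero-sum subsequences of length $\geq k$ — either vanish mod $p$ or telescope, so that the total count of zero-sum subsequences of length $0$ (the empty one, contributing $1$) must be balanced mod $p$ by a term proportional to $\binom{\ell - \text{something}}{\,\cdot\,}$. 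Carefully tracking Lucas' theorem with the $p$-adic digit structure $k = c_1 p^{t+1}+d$ should reduce the relevant binomial coefficient to $\binom{\mathsf{D}(G)}{k-1}\bmod p$.

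In more detail, the steps in order: (1) reduce to the extremal case $|S|=\ell$ and record that all minimal zero-sum subsequences have length in $[k,\mathsf{D}(G)]$; (2) write the mod-$p$ count of length-$j$ zero-sum subsequences and show, using $j$ ranging only over $[k,\mathsf{D}(G)]\cup\{0\}$ that can actually occur, that the alternating sum $\sum_j (-1)^j N_j \binom{?}{?}$ collapses; (3) invoke the inequality $2\mathsf{D}(G)-2k+1<\frac{p-1}{2}p^{t+1}$ to bound the number of "levels" that contribute and hence kill cross-terms by Lucas; (4) invoke $2k-\mathsf{D}(G)\geq p+d-v$ to control the remaining digit carries so the surviving coefficient is precisely $\binom{\mathsf{D}(G)}{k-1}$; (5) conclude that if $\binom{\mathsf{D}(G)}{k-1}\not\equiv 0\bmod p$ the count is nonzero, forcing the existence of a zero-sum subsequence of length in $[k,\mathsf{D}(G)]$ — but then, since such a subsequence has length $\le \mathsf{D}(G)$ and $\ell - \mathsf{D}(G) = \mathsf{D}(G)-k+1 \geq 1$, removing it leaves a sequence still long enough to repeat the argument, and iterating (or a direct dimension count) produces a zero-sum subsequence of length $\le k-1$, the desired contradiction.

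The main obstacle I anticipate is step (2)--(4): making the combinatorial identity for the mod-$p$ number of zero-sum subsequences of each length precise enough that the two numerical hypotheses can be seen to do exactly their job. In particular, the role of the quantity $v$ (presumably a valuation or digit-count associated to $\mathsf{D}(G)$ or to $p^{t+1}$) needs to be pinned down so that the carry analysis in Lucas' theorem is watertight; a sloppy version of this step would either need a stronger hypothesis than $2k-\mathsf{D}(G)\geq p+d-v$ or would fail to isolate $\binom{\mathsf{D}(G)}{k-1}$ cleanly. I would handle this by working digit-by-digit in base $p$ from the start, so that the inequality $2\mathsf{D}(G)-2k+1<\frac{p-1}{2}p^{t+1}$ is visibly the statement that a certain digit sum stays below the carry threshold.
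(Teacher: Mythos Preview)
There is a genuine structural gap: you are missing the case split that drives the paper's proof, and your step~(5) is logically broken. The argument must split according to whether $S$ has a zero-sum subsequence of length exceeding $\mathsf{D}(G)$. When it does not, the counting-mod-$p$ argument on $S$ that you sketch in steps (1)--(2) works and the resulting linear system of binomial coefficients is square precisely because $|S|=2\mathsf{D}(G)-k+1$; this is where the hypothesis $\binom{\mathsf{D}(G)}{k-1}\not\equiv 0\pmod p$ alone finishes the job (Lemma~\ref{pleqk-1}), and the other two hypotheses are irrelevant in this case. When $S$ does have a zero-sum $T$ with $|T|\in[\mathsf{D}(G)+1,|S|]$, your linear system on $S$ becomes underdetermined---the unknowns $\mathsf{N}^j(S)$ now run over $j$ up to $|S|$, not just to $\mathsf{D}(G)$---and there is no way to isolate $\binom{\mathsf{D}(G)}{k-1}$. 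The paper instead works inside such a $T$: if $|T|<2k$, split $T$ into two disjoint nonempty zero-sums, one of length $\le k-1$; if $|T|\ge 2k$, run a \emph{separate} counting lemma on the zero-sum sequence $T$ itself (Lemma~\ref{zerosub}). This is where the other two hypotheses enter: $v$ is the bottom $p$-adic digit of $|T|-k$ (not of $\mathsf{D}(G)$ or $|S|$, so your guess about its meaning is off), the condition $2k-\mathsf{D}(G)\ge p+d-v$ guarantees the needed row index lies in range, and $2\mathsf{D}(G)-2k+1<\tfrac{p-1}{2}p^{t+1}$ bounds $|T|-k\le |S|-k$ so that its leading base-$p^{t+1}$ digit $u_1$ satisfies $u_1<(p-1)/2$, whence $u_1+c_1+1<p$ and a key coefficient is nonzero mod $p$ by Lucas (Corollary~\ref{leqlpclcor}).

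Your step~(5) cannot be salvaged as written: exhibiting a zero-sum of length in $[k,\mathsf{D}(G)]$ does not contradict the standing assumption (which only forbids lengths $\le k-1$), and after removing it the residual sequence has length at most $2\mathsf{D}(G)-2k+1$, which under the hypotheses is below $\mathsf{D}(G)$, so it need not contain any further zero-sum and the iteration halts with nothing gained.
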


\begin{thm}\label{leqG}
Let $G$ be a finite abelian $p$-group. Then $\mathsf{s}_{\leq k-1}(G)\leq 2\mathsf{D}(G)-k+1$ provided that one of the following holds
\begin{description}
    \item[(i)] $G=C_2^r$ and $k-1=\frac{r+2}{2}$ with $r=2^{t+1}-2$, $t\geq 1$;
    \item[(ii)] $G=C_p^4$ and $k-1=2p$ with $p\geq 5$;
    \item[(iii)] $G=C_p^d$ and $k-1=(d-1)p\in [p,\mathsf{D}(C_p^d)]$.
\end{description}
\end{thm}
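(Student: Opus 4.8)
The plan is to read off parts (ii) and (iii) from Theorem~\ref{leqlp} by specialising $G$ and the parameter $k$, while treating part (i) — the case $p=2$ — by a separate inductive argument. To invoke Theorem~\ref{leqlp} one fixes the group, records its Davenport constant (for the $p$-groups $C_p^4$ and $C_p^d$ this is $\mathsf{D}=\mathsf{D}^*$, so $\mathsf{D}(C_p^4)=4(p-1)+1$ and $\mathsf{D}(C_p^d)=d(p-1)+1$), writes the prescribed $k$ in the normal form $k=c_1p^{t+1}+d$ required there, checks $k\in[\exp(G)+1,\mathsf{D}(G)]$ together with the two numerical inequalities, and verifies the congruence $\binom{\mathsf{D}(G)}{k-1}\not\equiv 0\pmod p$; the conclusion of Theorem~\ref{leqlp} is then exactly $\mathsf{s}_{\leq k-1}(G)\le 2\mathsf{D}(G)-k+1$, since every sequence of length $2\mathsf{D}(G)-k+1$ is shown to contain a nonempty zero-sum subsequence of length at most $k-1$. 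The only non-routine step is the congruence, which I would settle uniformly by Lucas' theorem; everything else is a short computation.

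For part (iii), $G=C_p^d$ with $\exp(G)=p$ and $\mathsf{D}(G)=d(p-1)+1$. The extreme values of $d$ are trivial: $d=p+1$ gives $k-1=\mathsf{D}(G)$, and $d=p$ gives $\mathsf{s}_{\leq\mathsf{D}(G)-1}(G)=\mathsf{D}(G)+1$ by the equality recalled in the introduction, in both cases with $2\mathsf{D}(G)-k+1$ equal to the asserted value. So assume $2\le d\le p-1$ and take $k=(d-1)p+1$ in the form above with $c_1=d-1\in[1,p-1]$, $t=0$ and units digit $1$. In base $p$, $\mathsf{D}(G)=(d-1)p+(p-d+1)$ has digits $(d-1,\,p-d+1)$ and $k-1=(d-1)p$ has digits $(d-1,\,0)$ — both legitimate since $2\le d\le p$ — so Lucas gives $\binom{\mathsf{D}(G)}{k-1}\equiv\binom{d-1}{d-1}\binom{p-d+1}{0}\equiv 1\pmod p$. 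Part (ii) follows the same pattern with $G=C_p^4$, $\mathsf{D}(G)=4p-3$, $k=2p+1$, normal form $c_1=2$ (allowed because $p\ge5$), $t=0$, units digit $1$; here $\mathsf{D}(G)$ has base-$p$ digits $(3,\,p-3)$ and $k-1=2p$ has $(2,\,0)$, whence $\binom{4p-3}{2p}\equiv\binom{3}{2}\binom{p-3}{0}\equiv 3\pmod p$, which is nonzero since $p\ge5$. In each part the membership $k\in[\exp(G)+1,\mathsf{D}(G)]$ and the two numerical inequalities of Theorem~\ref{leqlp} are quick estimates in $p$ (and $d$); a small prime for which some estimate is too tight should be checked separately against the tabulated values of Lemmas~\ref{col} and~\ref{r(G)=2}.

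Part (i) is where the real work lies. Now $p=2$, $G=C_2^r$ with $r=2^{\tau+1}-2$, so $\mathsf{D}(G)=r+1=2^{\tau+1}-1$ and $k=2^\tau+1$, whose only admissible normal form is $c_1=1$, $t=\tau-1$, units digit $1$. The congruence costs nothing: $\mathsf{D}(G)=2^{\tau+1}-1$ is a block of $1$'s in base $2$, so $\binom{\mathsf{D}(G)}{k-1}$ is odd for every $k-1\le\mathsf{D}(G)$. The genuine obstruction is that the size hypothesis of Theorem~\ref{leqlp} is far more restrictive at $p=2$ and fails once $\tau\ge2$, so for part (i) I would argue by induction on $\tau$, with base case $\mathsf{s}_{\leq2}(C_2^2)=4=3\cdot2-2$ from Lemma~\ref{col}(iii). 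For the inductive step I would split $C_2^{r}\cong H\oplus H'$ with $H\cong H'\cong C_2^{2^\tau-1}$, so that $\mathsf{D}(H)=2^\tau=k-1$; then, starting from a sequence of length $3\cdot2^\tau-2$, I would repeatedly split off minimal zero-sum blocks on the first coordinate — each necessarily of length at most $2^\tau$ — and run a counting argument on the images of these blocks in $H'$, using the rigid structure of minimal zero-sum sequences over $\F_2$ and the freedom in choosing and recombining the blocks, to force a single subsequence that is zero-sum on both coordinates and has total length at most $2^\tau$. The main difficulty is making this two-coordinate extraction quantitatively exact — landing precisely at $3\cdot2^\tau-2$ rather than at some larger multiple of $k-1$ — and this sharp balance is the real $p=2$ content of Theorem~\ref{leqG}.
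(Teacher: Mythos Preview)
Your plan to read off (ii) from Theorem~\ref{leqlp} fails, and not merely at a few small primes. With $G=C_p^4$ and $k=2p+1$ one has $2k-\mathsf{D}(G)=5$, while the hypothesis $2k-\mathsf{D}(G)\ge p+d-v$ (here the units digit is $d=1$, and $v$ in the proof of Theorem~\ref{leqlp} is the residue of $|T|-k$ modulo $p$) becomes $5\ge p+1-v$. For every prime $p\ge 7$ the value $|T|-k=3p$ lies in the admissible range $[2p+1,4p-7]$, so $v=0$ occurs and the inequality reads $5\ge p+1$, which is false. The second hypothesis $4p-7<\tfrac{p-1}{2}\,p$ is also violated at $p\in\{5,7\}$, and Lemmas~\ref{col} and~\ref{r(G)=2} contain no data for $C_5^4$ or $C_7^4$ to fall back on. So Theorem~\ref{leqlp} genuinely does not cover (ii); your ``quick estimates'' claim is incorrect here.

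The paper avoids Theorem~\ref{leqlp} entirely and instead exploits the feature common to all three parts: $k\equiv 1\pmod p$, which is exactly the regime of Lemma~\ref{k1}. The argument is uniform: first dispose of any zero-sum subsequence of length in $[\mathsf{D}(G)+1,2k-1]$ by splitting it in two; if no zero-sum subsequence of length $>\mathsf{D}(G)$ exists, Lemma~\ref{pleqk-1} together with your Lucas check on $\binom{\mathsf{D}(G)}{k-1}$ finishes; otherwise there is a zero-sum $T$ with $|T|\ge 2k$, and one shows $a_2=\binom{|T|-k}{k-2}+\binom{|T|-k+1}{k-1}\not\equiv 0\pmod p$ directly by Lucas over the short range of possible $|T|$ (this is Lemma~\ref{k1} for (i), a bare-hands computation for (ii)), then invokes Lemma~\ref{zerosub}. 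For (iii) a length comparison shows the range $[2k,|S|]$ is empty unless $d=2$, whereupon the rank-two result applies. This settles (i) in a few lines with no induction; your proposed splitting $C_2^r\cong H\oplus H'$ and ``two-coordinate extraction'' has no visible mechanism for landing on the exact bound $3\cdot 2^\tau-2$, and is unnecessary once one sees that the $a_2$ route works at $p=2$ as well.
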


\textbf{Remark:} It is easy to see that Theorem \ref{leqG} improves the results in Lemmas \ref{colup}, \ref{col}.

The paper is organized as follows. In Section $2$, we recall some basic notions, provide
several known results and also give the proofs of Theorems \ref{lowerCnr}, \ref{lower}. In Section $3$,
we present the proof for Theorem \ref{D(G)-2}. In Sections $4$, we consider a problem on zero-sum subsequences of zero-sum sequences over finite abelian $p$-groups, which is useful for the proofs of Theorems \ref{leqlp} and \ref{leqG}.  In Sections $5$, we present the proofs for Theorems \ref{leqlp} and \ref{leqG}.

\section{Preliminaries}


Throughout the paper, for real numbers $a\leq b$, we set
$[a, b] = \{x \in \mathbb{Z}: a\leq x\leq b\}$.

We consider the sequences as elements of the
free abelian monoid $\mathscr {F}(G)$ over $G$ and our notations and
terminologies coincide with \cite{GG06,GH06,G13}.
 Let
$$S=g_1 \boldsymbol{\cdot}\ldots\boldsymbol{\cdot} g_{\ell}= \prod_{g\in G}g^{\mathsf{v}_g(\alpha)}$$
be a sequence over $G$, where $\mathsf v_g(S)\in \N_0$ is the \textit{ multiplicity }
of $g$ in $S$ and $|S|=\ell=\sum_{g\in G}\mathsf{v}_g(S)\in \mathbb{N}_0$ is the \textit{length} of $S$.
The \textit{sum} of elements in $S$ is denoted by $\sigma(S)=\sum^{\ell}_{i=1}g_{i}
=\sum_{g\in G}v_g(S)\cdot g\in G$, and the maximal repetition of a
term in $S$ is denoted by $\mathsf{h}(S)$.
A sequence $S$ is called a \textit{zero-sum sequence} if $\sum^{\ell}_{i=1} g_i = 0\in G$.
We say $T$ is a \textit{subsequence} of $S$ if $\mathsf{v}_g(S) \ge \mathsf{v}_g(T)$
for all $g \in G$ and denote it by $T\mid S$. In particular, if $T$ is a subsequence
of $S$ and $T\neq S$, then we call $T$ the \textit{proper subsequence} of $S$.
If $T$ is \textit{empty}, then we say $T=1$. We say that $S$ is \textit{zero-free}
if it contains no nonempty zero-sum subsequence. If $S$ is a zero-sum sequence
and each proper subsequence is zero-free, then $S$ is called a \textit{minimal zero-sum sequence}.
By $\mathsf{supp}(S)$ we denote the subset of $G$ consisting of all elements
which $\mathsf v_g(S)>0$. We write $\langle S\rangle$ as a subgroup of $G$ generated by $\mathsf{supp}(S)$. If $T$ is a subsequence of $S$, we denote by $ST^{-1}$
the sequence obtained from $S$ by deleting $T$. If $T^n=\prod_{g|T}g^n$ is a subsequence of $S$ for some $n\in \mathbb{N}$, we denote by $ST^{-n}$
the sequence obtained from $S$ by deleting $T^n$.  We also write
$$\sum(S) = \{\sigma(T):T|S,T\neq 1 \}.$$

We collect some results on the groups $G$ with $\mathsf{D}(G)=\mathsf{D}^*(G)$.

\begin{lem}[\cite{GZZ15}]\label{D(G)}
Let $G$ be a finite abelain group.
Then $\mathsf{D}(G)=\mathsf{D}^*(G)$ holds for the following
\begin{enumerate}
\item[(a)] $r(G)\leq 2$;
\item[(b)] $G$ is a finite abelian $p$-group;
\item[(c)] $G= G' \oplus C_{k}$, where $G'$ is a finite abelian $p$-group with $\mathsf{D}(G')\leq 2\exp(G') - 1$ and $k$ is a positive integer not divisible by $p$;
\item[(d)] $G = C_2 \oplus C_{2m} \oplus C_{2n}$ with $m|n$;
\item[(e)] $G = C_3 \oplus C_{6m} \oplus C_{6n}$ with $m|n$;
\item[(f)] $G = C_{2p^a} \oplus C_{2p^b} \oplus C_{2p^c}$ with $a\leq b \leq c$ being nonnegative integers, and $p$ being a prime;
\item[(g)] $G = C^3_2 \oplus C_{2n}$.
\end{enumerate}

\end{lem}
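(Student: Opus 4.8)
The plan is to separate the two inequalities $\mathsf{D}(G)\ge\mathsf{D}^*(G)$ and $\mathsf{D}(G)\le\mathsf{D}^*(G)$, handling the first uniformly and the second case by case with the $p$-group case (b) as the engine. For the lower bound: if $G=C_{n_1}\oplus\cdots\oplus C_{n_r}$ with $e_i$ generating the $i$-th factor, then $\prod_{i=1}^{r}e_i^{n_i-1}$ has length $\mathsf{D}^*(G)-1$ and is zero-free, since a nonempty subsequence has sum $(c_1e_1,\dots,c_re_r)$ with $0\le c_i\le n_i-1$ and not all $c_i=0$. So every case reduces to proving $\mathsf{D}(G)\le\mathsf{D}^*(G)$.

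For (b) I would run Olson's group-algebra argument. In $A=\F_p[G]$ write $[g]\in A$ for the basis element indexed by $g\in G$, so $[g][h]=[g+h]$ and $[0]=1_A$; then $A$ is local with maximal ideal the augmentation ideal $I=\langle [g]-[0]:g\in G\rangle$. Writing $G=C_{p^{a_1}}\oplus\cdots\oplus C_{p^{a_r}}$ with generators $g_i$, the Frobenius identity gives $([g_i]-[0])^{p^{a_i}}=[g_i]^{p^{a_i}}-[0]=[p^{a_i}g_i]-[0]=0$, so $[g_i]\mapsto 1+x_i$ identifies $A$ with $\F_p[x_1,\dots,x_r]/(x_1^{p^{a_1}},\dots,x_r^{p^{a_r}})$; under this $I$ corresponds to $(x_1,\dots,x_r)$, hence $I^{\mathsf{D}^*(G)}=0$ because any monomial of total degree $\ge\sum_i(p^{a_i}-1)+1$ carries some exponent $\ge p^{a_i}$. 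Then for a sequence $S=h_1\cdots h_\ell$ of length $\ell=\mathsf{D}^*(G)$ we get $\prod_{i=1}^{\ell}([h_i]-[0])\in I^{\ell}=0$; expanding yields $\sum_{T\mid S}(-1)^{\ell-|T|}[\sigma(T)]=0$ in $A$, and the coefficient of $[0]$ forces $\sum_{T\mid S,\,\sigma(T)=0}(-1)^{|T|}\equiv 0\pmod p$. Since the empty $T$ contributes, a nonempty zero-sum subsequence exists, so $\mathsf{D}(G)\le\mathsf{D}^*(G)$, giving equality.

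For (a), the case $r(G)\le 1$ is the pigeonhole fact $\mathsf{D}(C_n)=n$, and for $r(G)=2$, $G=C_m\oplus C_n$ with $m\mid n$, I would invoke Olson's inductive argument: analyse a zero-free sequence of maximal length by passing to a cyclic quotient, extract short zero-sum blocks there with the cyclic case, push the sequence of their sums into the complementary cyclic subgroup, and control block lengths carefully enough to reach $m+n-1$. For (c)--(g) I would reduce to a $p$-group. In (c), since $p\nmid k$ one absorbs $C_k$ into the top cyclic factor, writing $G\cong C_{p^{a_1}}\oplus\cdots\oplus C_{p^{a_{r-1}}}\oplus C_{p^{a_r}k}$, then induct on the prime factorisation of $k$, using $\mathsf{D}(G')=\mathsf{D}^*(G')\le 2\exp(G')-1$ to bound the zero-sum blocks coming from the $p$-Sylow part and a Kneser-type addition argument for the inductive step. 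Parts (d)--(g) follow the same template specialised to the listed rank-three families, the sharper input being the exact Davenport constant of the relevant small $2$-groups (resp.\ $2p$-groups) together with the explicit structure of extremal zero-free sequences over $C_2^2$ or $C_p^2$.

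The hard part is exactly (d)--(g). Because the Davenport constant is not multiplicative over coprime direct factors, there is no soft reduction, and one must control by hand the interaction of short zero-sum subsequences of the $p$-Sylow subgroup with the complementary cyclic factor; for $C_2\oplus C_{2m}\oplus C_{2n}$, $C_3\oplus C_{6m}\oplus C_{6n}$ and $C_{2p^a}\oplus C_{2p^b}\oplus C_{2p^c}$ this requires the full rank-three toolkit (Kneser's theorem, Davenport-constant inductions, the classification of extremal sequences). These families were established individually in the literature, so in the write-up I would give (c) via the general reduction and cite the bespoke arguments for (d)--(g) rather than reprove them.
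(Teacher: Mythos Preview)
The paper does not prove this lemma at all: it is stated with a citation to \cite{GZZ15} and no proof is given, because each item is a classical result established elsewhere in the literature (Olson for (a) and (b), van Emde Boas--Kruyswijk and successors for (c)--(g)). So there is no ``paper's own proof'' to compare against; the paper simply quotes the statement.

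Your sketch is therefore doing strictly more than the paper does. The parts you actually wrote out---the lower bound via the standard zero-free sequence $\prod e_i^{n_i-1}$, and Olson's group-algebra/nilpotence argument for the $p$-group case (b)---are correct and are exactly the classical proofs. Your treatment of (a) for rank $2$ is too vague to count as a proof (Olson's actual argument is more delicate than ``extract short blocks and push into the complementary factor''), and your descriptions of (c)--(g) are really just outlines of a strategy, as you yourself acknowledge when you say you would cite the bespoke arguments rather than reprove them. That is fine, and it matches what the paper does: quote the result. If you wanted to include a proof in your own write-up, the honest thing is to give (b) in full as you have, note that (a) is also due to Olson, and cite the remaining items individually, since (d)--(g) genuinely require separate case-by-case work that does not follow from a single reduction.
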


\textit{The proof of Theorem \ref{lowerCnr}:}
By induction on $r$, we can show that there exists a sequence $S=T_0^{n-1}g_0^k$ over $C_n^r$ of length $2^{r-1}(n-1)+k$ such that each zero-sum subsequence of $S$ has length $\geq 2n-k$.
If $r=2$, then Lemma \ref{inv2} implies our result.
Assume that we have proved the result
for $<r$ and that $S'_1={g'}_1^{n-1}\ldots {g'}_m^{n-1}{g'}_0^k$ is the sequence over $C_n^{r-1}$ of length $2^{r-2}(n-1)+k$ satisfying that each zero-sum subsequence of $S'_1$ has length $\geq 2n-k$. Set $C_n^r=C_n^{r-1}\oplus \langle e_r\rangle$. Let $S_1=g_1^{n-1}\ldots g_m^{n-1}g_0^k$ is a sequence over $C_n^{r}$ by attaching $0$ to each element of $S'_1$ as the $r$-th entry. Obviously, each zero-sum subsequence of $S_1$ has length $\geq 2n-k$.
Let $$S=S_1(S_1g_0^{-k}+e_r)=g_0^k\prod_{i=1}^mg_i^{n-1}(g_i+e_r)^{n-1}.$$
It is easy to see that each nonempty zero-sum subsequence $T$ in $S$ must have the following form
$$T=g_0^{k_0}\prod_{i\in I_1}g_i^{v_i}\prod_{i\in I_2}(g_i+e_r)^{u_i},$$ where $k_0\in [0,k]$, $I_1,I_2$ are subsets of $[1,m]$ and $u_i,v_i\in [1,n-1]$.
It suffices to prove that $|T|\geq 2n-k.$

Obviously, $\sum_{i\in I_2}u_i\equiv 0 \mod n$.
In addition, $T':=g_0^{k_0}\prod_{i\in I_1}g_i^{v_i}\prod_{i\in I_2}(g_i)^{u_i}$ is zero-sum and has the form
$${T'}_1^nT'_2g_0^{k_0}$$ with $\mathsf{v}_g(T'_2g_0^{k_0})<n$ for any $g|T'_2$.
It is easy to see that $\sigma({T'}_1^n)=0$ and $T'_2g_0^{k_0}|S_1$.
Hence, $T'_2g_0^{k_0}$ is a zero-sum subsequence of $S_1$ of length $|T'_2g_0^{k_0}|\leq |T'|=|T|$.
Combining the structure of $S_1$ yields that either $|T|\geq|T'_2g_0^{k_0}|\geq 2n-k$ or $T'_2g_0^{k_0}$ is empty.
If the former holds, then we are done.
If the latter holds, then $k_0=0$ and $$T'=\prod_{i\in I_1}g_i^{v_i}\prod_{i\in I_2}(g_i)^{u_i}={T'}_1^n.$$
By $u_i,v_i\in [1,n-1]$ we must have that $I_1=I_2=I$ and $u_i+v_i=n$ for any $i\in I$.
Hence, $$T=\prod_{i\in I}g_i^{n-u_i}\prod_{i\in I}(g_i+e_r)^{u_i}$$  with $I\subset [1,m]$, $u_i\in [1,n-1]$, $\sum_{i\in I}u_i\equiv 0 \mod n$ and $|T|=|I|n$. If $|I|\geq 2$, then we are done. If $|I|=1$, then
$T=g_1^{n-u_1}(g_1+e_r)^{u_1}$ with $u_1\in [1,n-1]$ and $u_1\equiv 0 \mod n$. Obviously, this is impossible.
We complete the proof.
\qed

\textit{The proof of Theorem \ref{lower}:}
Set $G =H\oplus C_{n_r}=C_{n_1} \oplus \cdots \oplus C_{n_{r-1}}
\oplus C_{n_r}=\langle e_1\rangle \oplus \cdots \oplus \langle e_{r-1}\rangle
\oplus \langle e_r\rangle$, where $1 < n_1| \cdots|n_{r-1} |n_r$ and $\ord(e_i)=n_i$.
Let $$S=e_r^x\prod_{i=1}^{r-1}e_i^{n_i-1}(e_r-e_i)^{n_i-1}$$ be a sequence over $G$ of length $\mathsf{D}^*(G)+k-1$.
Since $\exp(G)\leq \mathsf{D}^*(G)-k\leq 2\exp(G)-1$, we have that $\mathsf{D}^*(H)\leq \exp(G)+k\leq \mathsf{D}^*(G)=\mathsf{D}^*(H)+\exp(G)-1$ and $x=|S|-\sum_{i=1}^{r-1}2(n_i-1)=\exp(G)+k-\mathsf{D}^*(H)\in [0,\exp(G)-1]$.
Obviously, each nonempty zero-sum subsequence $T$ of $S$ has the following form
$$T=e_r^{x_1}\prod_{i\in I}e_i^{v_i}(e_r-e_i)^{v_i},$$
where $x_1\in [0,x]$, $v_i\in [1,n_i-1]$ and $I\subset [1,r-1]$.
Since $\sigma(T)=(x_1+\sum_{i\in I}v_i)e_r=0$, we have that $n_r|(x_1+\sum_{i\in I}v_i)$, which implies that $|T|=x_1+\sum_{i\in I}2v_i=2(x_1+\sum_{i\in I}v_i)-x_1\geq 2n_r-x=2n_r-(n_r+k-\mathsf{D}^*(H))=\mathsf{D}^*(G)-k+1$.
Hence, $\mathsf{s}_{\leq \mathsf{D}^*(G)-k}(G)\geq |S|+1=\mathsf{D}^*(G)+k$ and we complete the proof.
\qed

\section{The proofs of Theorem \ref{D(G)-2}}

In this section, we prove Theorem \ref{D(G)-2}. In the proof, we use a technique from \cite{GYZ11}.

\begin{lem} [\cite{GH06}] \label{G}
Let $G$ be a finite abelian group and $S$ be a minimal zero-sum sequence over $G$ of length $\mathsf{D}(G)$. Then $\sum(S)=G$.
\end{lem}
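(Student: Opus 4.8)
The plan is to show that every $g\in G$ lies in $\sum(S)$. For $g=0$ there is nothing to do, since $\sigma(S)=0$ and $S$ is itself a nonempty subsequence of $S$, so $0\in\sum(S)$. Fix now $g\in G^{*}$ and suppose, for contradiction, that $g\notin\sum(S)$.

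The device I would use is a single ``swap''. Pick one term $g_1$ of $S$ and form $S'=(Sg_1^{-1})\boldsymbol{\cdot}(-g)$, i.e. delete one term of $S$ and append $-g$. Then $|S'|=(\mathsf{D}(G)-1)+1=\mathsf{D}(G)$, so by the very definition of the Davenport constant $S'$ has a nonempty zero-sum subsequence $W$. I would then split according to the multiplicity of $-g$ in $W$.

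If $\mathsf{v}_{-g}(W)=0$, then $W\mid Sg_1^{-1}$, and since $\mathsf{v}_{g_1}(W)\le\mathsf{v}_{g_1}(S)-1$ the subsequence $W$ is a \emph{proper} nonempty zero-sum subsequence of $S$, contradicting that $S$ is a minimal zero-sum sequence. If $\mathsf{v}_{-g}(W)\ge 1$, set $W_0=W\boldsymbol{\cdot}(-g)^{-1}$; one checks that $W_0\mid S$, and $0=\sigma(W)=\sigma(W_0)-g$ gives $\sigma(W_0)=g$. Moreover $W_0\neq 1$, for $W_0=1$ would force $g=0$, against $g\in G^{*}$. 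Hence $W_0$ is a nonempty subsequence of $S$ with $\sigma(W_0)=g$, so $g\in\sum(S)$, a contradiction. In both cases we are done, so $g\in\sum(S)$; as $g$ was arbitrary, $\sum(S)=G$.

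The point I expect to matter — and the reason for deleting a term rather than simply appending $-g$ to $S$ — is calibrating the auxiliary sequence to have length \emph{exactly} $\mathsf{D}(G)$: this is what both triggers the Davenport bound and forces the zero-sum subsequence in the first case to be a \emph{proper} subsequence of $S$, so that minimality of $S$ can be invoked. Merely appending $-g$ to $S$ produces a length-$(\mathsf{D}(G)+1)$ sequence whose guaranteed zero-sum subsequence could be all of $S$, yielding no contradiction. Beyond this, the verification that $W_0\mid S$ (even when $-g$ already occurs in $S$) and $W_0\neq 1$ is just bookkeeping with multiplicities in $\mathscr{F}(G)$.
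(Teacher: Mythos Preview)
Your proof is correct. The paper does not give its own proof of this lemma; it is quoted from \cite{GH06} without argument. Your swap argument---delete one term $g_1$ from $S$, append $-g$, and apply the definition of $\mathsf{D}(G)$ to the resulting length-$\mathsf{D}(G)$ sequence---is precisely the standard proof found in that reference, and your case analysis and multiplicity bookkeeping are sound.
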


\begin{lem} [\cite{GH06}]
Let $G$ be a cyclic group of order $n$ and $S$ be a sequence over $G$ of length $n$.  If $S$ is minimal zero-sum,
then $S=g^n$, where $g$ is a generator of $G$.
\end{lem}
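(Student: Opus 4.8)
The plan is to reduce the statement to the following purely ``zero-sum-free'' fact and then feed in minimality: \emph{if $T$ is a zero-sum-free sequence over $C_n$ with $|\sum(T)|=|T|$, then $T=h^{|T|}$ for some $h\in C_n$.} Granting this, the argument runs as follows. Write $S=g_1\boldsymbol{\cdot}\ldots\boldsymbol{\cdot}g_n$ and set $S'=Sg_1^{-1}$, a \emph{proper} subsequence of $S$, hence zero-sum-free of length $n-1$. Ordering $S'=h_1\boldsymbol{\cdot}\ldots\boldsymbol{\cdot}h_{n-1}$ and forming the partial sums $t_i=h_1+\cdots+h_i$ with $t_0=0$: no two of $t_0,\ldots,t_{n-1}$ can agree (an equality $t_i=t_j$ with $i<j$ would give the nonempty zero-sum subsequence $h_{i+1}\boldsymbol{\cdot}\ldots\boldsymbol{\cdot}h_j$), so $t_1,\ldots,t_{n-1}$ are $n-1$ distinct nonzero elements of $\sum(S')$; as also $\sum(S')\subseteq C_n\setminus\{0\}$ (indeed $\sum(S')\subseteq\sum(S)=C_n$ by Lemma \ref{G}), we get $|\sum(S')|=n-1=|S'|$. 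The displayed fact then gives $S'=h^{n-1}$, whence $g_1=\sigma(S)-\sigma(S')=-(n-1)h=h$ and $S=h^n$. Finally minimality forces $jh\neq 0$ for $1\le j\le n-1$ (the proper subsequence $h^j$ is zero-sum-free), i.e.\ $\ord(h)=n$, so $h$ generates $C_n$.

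It remains to prove the displayed fact, and I would do this by induction on $m:=|T|$, the case $m=1$ being trivial. Given $T$ with $|\sum(T)|=m$, remove a term $g'$ and put $T'=Tg'^{-1}$. First, $\sum(T')\neq\sum(T)$: if they agreed, adjoining the removed copy of $g'$ to subsequences of $T'$ would yield $g'+\sum(T')\subseteq\sum(T')$, so $\sum(T')$ would be closed under adding $g'$ and contain $g'$, hence contain all of $\langle g'\rangle$, in particular $0$ — contradicting zero-sum-freeness of $T'$. Since $\sum(T')\subseteq\sum(T)$ has at least $m-1$ elements (partial-sum argument again), we get $|\sum(T')|=m-1=|T'|$, so by induction $T'=h^{m-1}$. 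Now suppose $g'\neq h$. As $h^{m-1}$ is zero-sum-free, $\ord(h)\ge m$, and
$$\sum(T)=\{h,\ldots,(m-1)h\}\ \cup\ \big(g'+\{0,h,\ldots,(m-1)h\}\big),$$
the first set having $m-1$ elements and the second $m$. If $g'\notin\langle h\rangle$ the two sets are disjoint and $|\sum(T)|=2m-1>m$. If $g'=qh$ with $2\le q\le\ord(h)-1$, then either $\ord(h)-q\le m-1$, so $g'\boldsymbol{\cdot}h^{\ord(h)-q}$ is a nonempty zero-sum subsequence of $T$ — impossible; or $\ord(h)-q\ge m$, in which case $h\notin g'+\{0,\ldots,(m-1)h\}$ (the needed index would be $\equiv 1-q\pmod{\ord(h)}$, i.e.\ at least $m+1$), so $|\sum(T)|\ge m+1$. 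Each alternative contradicts $|\sum(T)|=m$ or the zero-sum-freeness of $T$; hence $g'=h$, completing the induction.

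The only place that needs real care is this last paragraph — the elementary but slightly fiddly split according to whether $g'\in\langle h\rangle$, and the congruence count modulo $\ord(h)$ — and I expect it to be the main obstacle; everything else is bookkeeping with partial sums. A natural-looking alternative, namely showing directly that the partial sums $s_0,\ldots,s_{n-1}$ of $S$ itself exhaust $C_n$ and then inducting on $n$ by projecting modulo a maximal proper subgroup, founders on controlling the number of disjoint zero-sum subsequences one can extract from the quotient sequence, which is why I would organize the proof around $\sum(T)$ as above.
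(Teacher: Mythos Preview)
The paper does not supply its own proof of this lemma; it is simply quoted from \cite{GH06}. Your argument is correct and is in fact one of the standard routes to this classical result: reduce to the zero-sum-free sequence $S'=Sg_1^{-1}$ of length $n-1$, observe via distinctness of partial sums that $|\sum(S')|=n-1$, and then invoke the characterization of zero-sum-free sequences $T$ with $|\sum(T)|=|T|$ as constant sequences. Your inductive proof of that characterization is sound; the case split on whether $g'\in\langle h\rangle$ and the congruence count modulo $\ord(h)$ are handled correctly.

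One cosmetic remark: the parenthetical invoking Lemma~\ref{G} (``indeed $\sum(S')\subseteq\sum(S)=C_n$'') is unnecessary and slightly misleading. What you actually need is $\sum(S')\subseteq C_n\setminus\{0\}$, and this follows immediately from the fact that $S'$, being a proper subsequence of the minimal zero-sum sequence $S$, is zero-sum-free; the containment $\sum(S')\subseteq C_n$ is trivial and Lemma~\ref{G} plays no role.
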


By the above we can easily see that the following result holds.

\begin{cor} \label{cyc}
Let $G$ be a cyclic group of order $n$ and $S$ be a sequence over $G$ of length $\geq n$.  If every zero-sum subsequence of $S$ has the length $\geq n$,
then $S=g^{|S|}$, where $g$ is a generator of $G$.
\end{cor}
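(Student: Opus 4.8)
The plan is to reduce to the two preceding lemmas by extracting from $S$ a suitable minimal zero-sum subsequence and then propagating its rigid structure back to all of $S$. Write $n=|G|$. First I would observe that since $S$ has length $\geq n=\mathsf{D}(G)$, it contains a nonempty zero-sum subsequence; by hypothesis any such subsequence has length $\geq n$. Picking a zero-sum subsequence $T\mid S$ of minimal length, I claim $|T|=n$ and $T$ is a minimal zero-sum sequence: if $T$ had a proper nonempty zero-sum subsequence $T'$, then $T'$ would be a zero-sum subsequence of $S$ of length $<|T|$, contradicting minimality of $|T|$; and $|T|\geq n$ forces $|T|=n$ once minimality is combined with the fact that a minimal zero-sum sequence over a cyclic group of order $n$ cannot be longer than $n$ (indeed the second lemma above pins it down exactly). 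Thus by the second cited lemma, $T=g^n$ for some generator $g$ of $G$.

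Next I would show every term of $S$ equals $g$. Suppose $h\mid S$ with $h\neq g$. Since $g$ generates $G$, write $h=cg$ with $c\in[1,n-1]$ (note $h\neq 0$, for otherwise $h$ alone is a zero-sum subsequence of length $1<n$). Now consider the subsequence $g^{n-c}\boldsymbol{\cdot} h$ of $S$: if $\mathsf v_g(S)\geq n-c$ this is available inside $S$, it is zero-sum (its sum is $(n-c)g+cg=0$), and it has length $n-c+1\leq n$, with equality only if $c=1$, i.e. $h=g$, contradiction. So it remains to rule out the case $\mathsf v_g(S)<n-c$ for the offending term $h$. But $S$ contains the full block $g^n$ coming from $T$, so $\mathsf v_g(S)\geq n>n-c$ always holds; hence no term $h\neq g$ can occur. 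Therefore $S=g^{|S|}$ with $g$ a generator of $G$, which is the claim.

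The only delicate point is the length bookkeeping in the second paragraph: one must make sure the auxiliary zero-sum subsequence $g^{n-c}\boldsymbol{\cdot}h$ genuinely sits inside $S$ and has length strictly less than $n$ unless $h=g$. This is immediate here because the minimal zero-sum subsequence $T=g^n$ already supplies $n$ copies of $g$, so $g^{n-c}$ is available for any $c\geq 1$, and $n-c+1<n$ exactly when $c\geq 2$. I expect this to be the main (and essentially only) obstacle, and it is routine; everything else follows directly from Lemma \ref{G}'s companion result that a length-$n$ minimal zero-sum sequence over $C_n$ is of the form $g^n$ with $g$ a generator.
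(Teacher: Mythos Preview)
Your argument is correct and is exactly the natural fleshing out of what the paper intends: the paper gives no detailed proof here, merely stating that the corollary follows immediately from the preceding lemma on minimal zero-sum sequences of length $n$ over $C_n$. Your extraction of a minimal zero-sum subsequence $T=g^n$ and the construction of the short zero-sum $g^{n-c}\boldsymbol{\cdot}h$ for any hypothetical $h\neq g$ is precisely the routine verification the paper leaves to the reader.
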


Let $G$ be a finite abelian group and  let $A$ be a finite non-empty subset of $G$. The stabilizer of $A$ is defined by $Stab (A) = \{g \in G : g + A = A\}$.

\begin{lem} [\cite{GH06}] \label{Stab}
Let $G$ be a finite abelian group and  let $A$ and $B$ be finite non-empty subsets of $G$. If $B\subset Stab(A)$, then $\langle B\rangle$ is a subgroup of $Stab(A)$.

Furthermore, the following statements are equivalent:
\begin{enumerate}
\item[(a)] $B\subset Stab(A)$.
\item[(b)] $A = A + B.$
\item[(c)] $A = (g_1 + \langle B\rangle) \cup \ldots\cup (g_r +\langle B\rangle)$ for some $r\in \mathbb{N}$ and $g_1,\ldots , g_r\in A.$
\end{enumerate}
\end{lem}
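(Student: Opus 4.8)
The plan is to first show that $Stab(A)$ is itself a subgroup of $G$, which yields the opening assertion immediately, and then to prove the equivalence by closing the cycle $(a)\Rightarrow(b)\Rightarrow(c)\Rightarrow(a)$.

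For the subgroup claim, I would check the group axioms directly for $Stab(A)=\{g\in G:g+A=A\}$: we have $0+A=A$; if $g+A=A$ and $h+A=A$, then $(g+h)+A=g+(h+A)=g+A=A$; and $g+A=A$ gives $(-g)+A=(-g)+(g+A)=A$. Hence $Stab(A)$ is a subgroup of $G$. If $B\subset Stab(A)$, then $\langle B\rangle\subset Stab(A)$, and being a subgroup of $G$ contained in $Stab(A)$ it is a subgroup of $Stab(A)$, which is the opening statement.

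Now the equivalences. For $(a)\Rightarrow(b)$: since $B\neq\emptyset$ and $A+b=A$ for every $b\in B$, we get $A+B=\bigcup_{b\in B}(A+b)=\bigcup_{b\in B}A=A$. For $(b)\Rightarrow(c)$: from $A=A+B$ an induction on $k$ gives $A=A+kB$ for all $k\in\mathbb N$, where $kB=B+\cdots+B$ ($k$ summands). Since $G$ is finite, every $b\in B$ has finite order and $-b=(\ord(b)-1)b$, so each element of $\langle B\rangle$ is a nonnegative integer combination of elements of $B$; thus $\langle B\rangle=\bigcup_{k\geq0}kB$ with the convention $0B=\{0\}$, and therefore $A+\langle B\rangle=\bigcup_{k\geq0}(A+kB)=A$. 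In particular $a+\langle B\rangle\subset A$ for every $a\in A$, so $A$ is a union of cosets of $\langle B\rangle$; as $A$ is finite, finitely many of these cosets, say $g_1+\langle B\rangle,\ldots,g_r+\langle B\rangle$ with $g_1,\ldots,g_r\in A$, already cover $A$, which is $(c)$. For $(c)\Rightarrow(a)$: if $A=(g_1+\langle B\rangle)\cup\cdots\cup(g_r+\langle B\rangle)$, then for any $b\in B\subset\langle B\rangle$ we have $b+\langle B\rangle=\langle B\rangle$, hence $b+A=\bigcup_{i=1}^{r}(g_i+b+\langle B\rangle)=\bigcup_{i=1}^{r}(g_i+\langle B\rangle)=A$; thus $B\subset Stab(A)$.

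I do not anticipate a genuine obstacle; the only step requiring a bit of care is $(b)\Rightarrow(c)$, where finiteness of $G$ is used twice — first to express $\langle B\rangle$ as the union of the iterated sumsets $kB$, so that the relation $A+B=A$ propagates to $A+\langle B\rangle=A$, and then to reduce the a priori infinite family of $\langle B\rangle$-cosets covering $A$ to a finite subfamily indexed by elements of $A$.
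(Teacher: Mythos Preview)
Your argument is correct. The paper does not give its own proof of this lemma; it simply cites it as a known result from \cite{GH06}, so there is nothing to compare your approach against, but your direct verification that $Stab(A)$ is a subgroup together with the cycle $(a)\Rightarrow(b)\Rightarrow(c)\Rightarrow(a)$ is the standard elementary proof and is sound.
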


\begin{lem}\label{rank2}
Let $G$ be a finite abelian group with $r(G)\geq 2$ and $G$ does not have the form $C_2\oplus C_{2m}$. Let $S$ be a sequence over $G$ of length $\mathsf{D}(G)+1$. If every zero-sum subsequence of $S$ has the length $\geq \mathsf{D}(G)-1$,
then the rank of $\langle S\rangle$ is at least $2$.
\end{lem}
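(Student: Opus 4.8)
The plan is to argue by contradiction: suppose $\langle S\rangle$ is cyclic, say $\langle S\rangle = \langle g_0\rangle$ with $\ord(g_0)=m$. Since $|S|=\mathsf D(G)+1 \geq 3$ and every nonempty zero-sum subsequence of $S$ has length $\geq \mathsf D(G)-1$, in particular $S$ itself has at least one zero-sum subsequence only if $|S|\geq \mathsf D(\langle S\rangle)=m$; and any such subsequence $T$ satisfies $|T|\geq \mathsf D(G)-1$. First I would observe that, working inside the cyclic group $\langle S\rangle$, Corollary \ref{cyc} applies once we know every zero-sum subsequence of $S$ has length $\geq m$: this forces $S=g^{|S|}$ for a generator $g$ of $\langle S\rangle$. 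So the crux is to compare the threshold $\mathsf D(G)-1$ with $m=|\langle S\rangle|$, and to rule out the degenerate possibility that $S$ is zero-free over $\langle S\rangle$ entirely.

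The key inequality to exploit is $\mathsf D(G)\geq \mathsf D^*(G)=1+\sum_{i=1}^r(n_i-1)$. If $\langle S\rangle$ has rank $1$, then $m\leq \exp(G)=n_r$, hence $\mathsf D(G)-1 \geq \mathsf D^*(G)-1 = \sum_{i=1}^r(n_i-1)\geq (n_1-1)+(n_r-1)$. Since $r(G)\geq 2$ we have $n_1\geq 2$, so $\mathsf D(G)-1\geq n_r \geq m$; and equality $\mathsf D(G)-1=m$ can only occur when $\mathsf D(G)=\mathsf D^*(G)$, $r=2$, $n_1=2$ and $m=n_r=n_2$ — that is, precisely in the excluded case $G=C_2\oplus C_{2m}$ (where $\mathsf D^*=\mathsf D$ by Lemma \ref{D(G)}(a)). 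Outside that case we get the strict inequality $\mathsf D(G)-1 > m$. Now if $S$ had a nonempty zero-sum subsequence $T$, then $|T|\geq \mathsf D(G)-1>m=|\langle S\rangle|$, but $|T|\leq|S|=\mathsf D(G)+1$, and moreover a sequence over a cyclic group of order $m$ of length $>m$ contains a zero-sum subsequence of length $\leq m$ (indeed of length exactly $m$ after removing multiples), contradicting $|T|\geq \mathsf D(G)-1>m$. Hence $S$ is zero-free over $\langle S\rangle$, forcing $|S|\leq \mathsf D(\langle S\rangle)-1 = m-1 < \mathsf D(G)-1 < \mathsf D(G)+1=|S|$, a contradiction.

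I expect the main obstacle to be the careful bookkeeping of the boundary cases: one must check that the only way the chain of inequalities $\mathsf D(G)+1=|S|\leq m-1$ (or the parallel argument with a zero-sum $T$) fails to be contradictory is exactly $G=C_2\oplus C_{2m}$, and this requires knowing $\mathsf D(G)=\mathsf D^*(G)$ in the relevant low-rank situations, which is supplied by Lemma \ref{D(G)}(a) for $r(G)\le 2$. A secondary subtlety: when $r(G)\geq 3$ the bound $\mathsf D^*(G)-1\geq (n_1-1)+(n_2-1)+(n_3-1)\geq n_3+\cdots$ comfortably exceeds $\exp(G)$, so no exceptional group arises there, and it is only the rank-$2$ analysis — in particular the sub-case $n_1=2$ — that produces the single exclusion $C_2\oplus C_{2m}$. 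I would present the rank-$2$ case first and then dispatch $r(G)\geq 3$ with the cruder estimate. Alternatively, if one wants to avoid invoking Corollary \ref{cyc} at all, the pure counting argument in the previous paragraph (zero-free $\Rightarrow |S|\leq m-1$; not zero-free $\Rightarrow$ a short zero-sum subsequence of length $\leq m$) already suffices, so Corollary \ref{cyc} is best viewed as an optional shortcut for pinning down the exact structure $S=g^{|S|}$ rather than as a logically necessary ingredient.
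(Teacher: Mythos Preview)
Your argument is correct and follows essentially the same strategy as the paper: assume $\langle S\rangle\cong C_m$ is cyclic, compare $\mathsf D(G)-1$ with $m$, and pin down the equality case as $G\cong C_2\oplus C_m$ with $2\mid m$. The one organizational difference is that the paper first invokes Corollary~\ref{cyc} to deduce $S=g^{|S|}$, then uses the zero-sum subsequence $g^m$ to force $m\geq \mathsf D(G)-1$ and hence $\mathsf D(G)=m+1$, whereas you front-load the inequality chain $\mathsf D(G)-1\geq \mathsf D^*(G)-1\geq n_r\geq m$ and analyse equality directly, obtaining the strict inequality $\mathsf D(G)-1>m$ in the non-excluded case and finishing without Corollary~\ref{cyc}. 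Both routes are short; yours is marginally more self-contained. One cosmetic point: you use $m$ both for $|\langle S\rangle|$ and in the label $C_2\oplus C_{2m}$, so it would be cleaner to write the conclusion as $G\cong C_2\oplus C_m$ with $m$ even (i.e.\ of the excluded form $C_2\oplus C_{2m'}$).
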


\begin{proof}
Suppose that the rank of $\langle S\rangle$ is $1$. Thus set $\langle S\rangle\cong C_n$. Since $r(G)\geq 2$, we have that $\langle S\rangle$ is a proper subgroup of $G$, which implies that $\mathsf{D}(G)\geq n+1$. It follows that every zero-sum subsequence of $S$ has the length $\geq \mathsf{D}(G)-1\geq n$.
By Corollary \ref{cyc} we have that $S=g^{\mathsf{D}(G)+1}$, where $g$ is a generator of $\langle S\rangle$. Obviously, $S$ has a zero-sum subsequence of length $n$. Hence, $n\geq \mathsf{D}(G)-1\geq n$, i.e., $\mathsf{D}(G)= n+1$. By a simple analysis, we must have that $G\cong C_2\oplus C_{n}$ with $2|n$, where $C_n=\langle S\rangle=\langle g\rangle$. This is a contradiction to our assumed condition.
\end{proof}

\begin{lem}\label{g2}
Let $G$ be a finite abelian group with $r(G)\geq 2$ and $G$ does not have the form $C_2\oplus C_{2m}$.
Let $S=Tg^2$ be a sequence over $G$ of length $\mathsf{D}(G)+1$, where $T$ is a zero-sum subsequence of length $\mathsf{D}(G)-1$.
Then $S$ has a zero-sum subsequence with length at most $\mathsf{D}(G)-2$.
\end{lem}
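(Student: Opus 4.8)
The plan is to argue by contradiction: assume every zero-sum subsequence of $S = Tg^2$ has length at least $\mathsf{D}(G)-1$, and derive a contradiction. First I would apply Lemma \ref{G} to the minimal zero-sum sequence $T$ (of length $\mathsf{D}(G)-1$ — note here we are really using that $T$ being zero-sum of length $\mathsf{D}(G)-1$ with no short zero-sum subsequence forces $T$ to be a minimal zero-sum sequence, though $T$ has length $\mathsf D(G)-1$ rather than $\mathsf D(G)$, so I will instead work with the set $\sum(T)\cup\{0\}$ directly). The key object to study is $A := \sum(T) \cup \{0\}$, the set of subsums of $T$ including $0$. Since $T$ is zero-free of length $\mathsf{D}(G)-1$ and $S$ has no zero-sum subsequence of length $\le \mathsf{D}(G)-2$, I can extract structural information: in particular, for any subsequence $T' \mid T$ with $1 \le |T'| \le \mathsf{D}(G)-2$ one has $\sigma(T') \neq 0$, and moreover $\sigma(T') \neq -g$ and $\sigma(T') \neq -2g$ for $|T'|$ small (otherwise $T'g$ or $T'g^2$ would be a short zero-sum subsequence of $S$).

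The heart of the argument should be a stabilizer/Kneser-type computation. Consider $A = \{0\} \cup \sum(T)$. Because $T$ is zero-free of maximal length $\mathsf{D}(G)-1 = |T|$, a standard fact (cf. the Davydov–Chowla–type bounds, or directly Lemma \ref{Stab}) gives that $A$ is large — specifically, one expects $|A|$ to be close to all of $\langle T\rangle$, and in fact if $T$ has the "long" structure then $\sum(T)$ together with $0$ is a union of cosets of a nontrivial subgroup. I would then examine whether $-g$ or $-2g$ lies in $A$: if $-g \in \sum(T)$ via a subsequence $T'$, then $T'g$ is zero-sum; its length is $|T'|+1$, and we need $|T'|+1 \ge \mathsf{D}(G)-1$, i.e. $|T'| \ge \mathsf{D}(G)-2 = |T|-1$, so $T' \in \{T, Tg_i^{-1}\}$ for a single term $g_i$. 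Similarly $-2g \in \sum(T)$ forces the witnessing subsequence to be all of $T$ or $T$ minus one term. This pins down $g$ very tightly relative to $T$: essentially $g = \sigma(T'')$ for $T''$ of length $0$ or $1$ (taking complements), giving $g = 0$, $g = g_i$, or $2g = g_i$ or $2g = g_i + g_j$ for terms of $T$ — a short list of cases.

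From there I would push each case to a contradiction. If $g = 0$, then $S = T \cdot 0^2$ has $0$ as a zero-sum subsequence of length $1 < \mathsf{D}(G)-1$ (using $\mathsf{D}(G) \ge 3$, which holds since $r(G)\ge 2$), contradiction. If $g$ equals a term $g_i$ of $T$, then $T g_i^{-1} g^2$ rearranges so that replacing considerations show $\langle S\rangle = \langle T\rangle$ has rank $1$ or $2$; invoking Lemma \ref{rank2}, $\langle S\rangle$ has rank $\ge 2$, and then one uses that a rank-$\ge 2$ group cannot have its subsums behave this rigidly — more precisely, in a group of rank $\ge 2$ (and not $C_2 \oplus C_{2m}$) the set $\sum(T)$ for a maximal zero-free $T$ must avoid $-g$ and $-2g$ simultaneously unless $g$ is forced into a degenerate position already excluded. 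The remaining cases $2g \in \{g_i, g_i+g_j\}$ are handled by substituting: $Tg_i^{-1}g_j^{-1}g^2$ (or $Tg_i^{-1}g^2$) becomes zero-sum of length $\mathsf{D}(G)-3$ or $\mathsf{D}(G)-2$, which is $\le \mathsf{D}(G)-2$, the desired contradiction, provided the relevant terms are distinct — and the coincidence cases ($g = g_i$, already treated, or $g_i = g_j$) are folded back in.

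The main obstacle I anticipate is the rank-$1$ exclusion: ruling out that $\langle S\rangle$ is cyclic cleanly, which is exactly why Lemma \ref{rank2} was proved just above and why the hypothesis "$G$ is not $C_2 \oplus C_{2m}$" appears. Threading that lemma through — verifying its hypothesis "every zero-sum subsequence of $S$ has length $\ge \mathsf{D}(G)-1$" is met by our contradiction assumption — and then ensuring the rank-$2$ case genuinely produces a short zero-sum subsequence (rather than merely failing to produce the rigid structure) will require the Kneser/stabilizer machinery of Lemmas \ref{Stab} and \ref{G}; this case analysis on the few positions $g$ can occupy relative to $T$ is where the real work lies, and I expect it to be somewhat delicate but elementary once set up.
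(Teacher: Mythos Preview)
Your proposal has a genuine gap: the case analysis does not close, because the length computations in your final step are wrong. You write that if $2g = g_i + g_j$ then $Tg_i^{-1}g_j^{-1}g^2$ is zero-sum of length $\mathsf{D}(G)-3$, and if $2g = g_i$ then $Tg_i^{-1}g^2$ has length $\mathsf{D}(G)-2$. But $|T| = \mathsf{D}(G)-1$, so removing two terms and adding $g^2$ gives length $(\mathsf{D}(G)-1)-2+2 = \mathsf{D}(G)-1$, and removing one term and adding $g^2$ gives length $\mathsf{D}(G)$. Neither is $\le \mathsf{D}(G)-2$, so no contradiction arises. More broadly, the constraints you extract --- $g \in \{0,g_i\}$ or $2g \in \{0,g_i,g_i+g_j\}$ for some terms $g_i,g_j$ of $T$ --- are correct consequences of the contradiction hypothesis, but they do not by themselves produce a short zero-sum subsequence; the vague appeal to ``Kneser/stabilizer machinery'' for the rank-$2$ case is not a proof. (Minor slip: $T$ is not zero-free; it is zero-sum by hypothesis and, under your contradiction assumption, minimal zero-sum.)

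The paper's argument supplies the missing idea, and it is quite different from your outline. Assuming every zero-sum subsequence of $S$ has length $\ge \mathsf{D}(G)-1$, one shows that for any term $g_1 \mid S$ with $g_1 \notin \{g,2g\}$ the element $2g - g_1$ also occurs in $S$: indeed $Sg_1^{-1}$ has length $\mathsf{D}(G)$, is not zero-sum (its sum is $2g-g_1 \neq 0$), and all its zero-sum subsequences have length $\ge \mathsf{D}(G)-1$, hence exactly $\mathsf{D}(G)-1$; the single leftover term must then equal $\sigma(Sg_1^{-1}) = 2g - g_1$. One next verifies that replacing the pair $g_1 \boldsymbol{\cdot} (2g-g_1)$ by $g^2$ preserves the ``no zero-sum of length $\le \mathsf{D}(G)-2$'' property. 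Iterating this replacement eliminates every term outside $\{g,2g\}$, producing a sequence $S_1 = g^a(2g)^b$ of length $\mathsf{D}(G)+1$ still satisfying the hypothesis --- but $\langle S_1\rangle$ is cyclic, contradicting Lemma~\ref{rank2}. The replacement step, not a direct subsum analysis, is the crux.
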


\begin{proof}
Suppose that every zero-sum subsequence of $S$ has the length $\geq \mathsf{D}(G)-1$. For any $g_1|S$ with $g_1\not\in \{g,2g\}$, we have that $\sigma(Sg_{1}^{-1})=\sigma(T)+2g-g_1=2g-g_1\neq 0$.
Since $|Sg_{1}^{-1}|=|S|-1=\mathsf{D}(G)$ and every zero-sum subsequence of $Sg_{1}^{-1}$ has the length $\geq \mathsf{D}(G)-1$, we must have that every zero-sum subsequence of $Sg_{1}^{-1}$ has the length $\mathsf{D}(G)-1$. It follows that $Sg_{1}^{-1}$ has an element $\sigma(Sg_{1}^{-1})=2g-g_1.$
Hence, $$g_1(2g-g_1)|S.$$

We claim that every zero-sum subsequence of $S':=S(g_1(2g-g_1))^{-1}g^2$ has the length $\geq \mathsf{D}(G)-1$ for any $g_1|S$ with $g_1\not\in \{g,2g\}$.
By $g_1\not\in \{g,2g\}$ we have that $g_1$ and $2g-g_1$ are not equal to $g$. Hence, $S'=S(g_1(2g-g_1))^{-1}g^2=T(g_1(2g-g_1))^{-1}g^4$. It follows that every zero-sum subsequence $T'$ of $S'$ has the form $T'=T'_1g^x$ with $T'_1|T(g_1(2g-g_1))^{-1}$ and $x\leq 4$. If $x\leq 2$, then $T'|S$, which implies that $|T'|\geq \mathsf{D}(G)-1$. If $x=3$ or $4$, then $T'=T'_1g^x=(T'_1g^2)g^{x-2}$ with $1\leq x-2\leq 2$ and $\sigma(T'_1g^2)=\sigma(T'_1g_1(2g-g_1))$. Obviously, $T'_1g_1(2g-g_1)g^{x-2}|Tg^2$ and $\sigma(T'_1g_1(2g-g_1)g^{x-2})=\sigma(T')=0$. Hence, $|T'_1g_1(2g-g_1)g^{x-2}|=|(T'_1g^2)g^{x-2}|=|T'|\geq \mathsf{D}(G)-1$.

For any $g_1|S$ with $g_1\not\in \{g,2g\}$, we may repeat the replacement procedure $S(g_1(2g-g_1))^{-1}g^2$.
Finally, we can obtain a sequence $S_1=g^a(2g)^b$ with $|S_1|=|S|=\mathsf{D}(G)+1$ satisfying that every zero-sum subsequence of $S_1$ has the length $\geq \mathsf{D}(G)-1$. Obviously, $r(\langle S_1\rangle)=1$, a contradiction to Lemma \ref{rank2}. We complete the proof.
\end{proof}

\begin{lem}\label{disg}
Let $G$ be a finite abelian group with $r(G)\geq 3$. Let $S=Tg_1g_2g_3$ be a sequence over $G$ of length $\mathsf{D}(G)+2$, where $g_1,g_2,g_3$ are pairwise distinct and $T$ is a zero-sum subsequence of length $\mathsf{D}(G)-1$.
If every zero-sum subsequence of $S$ has the length $\geq \mathsf{D}(G)-1$,
then $S$ must have the following form
\begin{equation}
\begin{aligned}
\label{S}
S&=\prod_{i=1}^{\ell}a_i(g_1+g_2-a_i)(g_1+g_2)^{\sigma}g_1g_2g_3 \\
&=\prod_{i=1}^{\ell}b_i(g_1+g_3-b_i)(g_1+g_3)^{\sigma}g_1g_2g_3 \\
&=\prod_{i=1}^{\ell}c_i(g_2+g_3-c_i)(g_2+g_3)^{\sigma}g_1g_2g_3
\end{aligned}
\end{equation}
satisfying that all elements of $S$ are pairwise distinct,
where $\sigma=0$ or $1$, $\ell=\frac{\mathsf{D}(G)-1-\sigma}{2}$ and $g_j+g_k\not\in \{g_i,2g_i\}$ for $\{i,j,k\}=\{1,2,3\}$.
\end{lem}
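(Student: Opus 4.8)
The plan is to run the same "replacement procedure" as in Lemma \ref{g2}, now adapted to the three distinct extra terms $g_1,g_2,g_3$, and to extract the rigid structure forced by the hypothesis that every zero-sum subsequence of $S$ has length $\geq \mathsf{D}(G)-1$. First I would record the basic consequence: for any term $h \mid S$ with $h \notin \{g_1,g_2,g_3\}$ (so that $h$ comes from the zero-sum part $T$, up to multiplicity), deleting $h$ from $S$ together with two of the three special elements gives a subsequence of length $\mathsf{D}(G)$ whose sum is (a sum of two of the $g_i$) $-h$; since such a subsequence cannot be zero-sum unless that value vanishes, and since the minimal zero-sum length is $\mathsf{D}(G)-1$, one concludes that $S$ actually contains the term $(g_j+g_k)-h$ for each pair $\{j,k\}$, unless $h = (g_j+g_k)-h$ i.e. $2h = g_j+g_k$. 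In other words $S$ pairs up its terms: $h$ and $(g_j+g_k)-h$ both divide $S$, and the "unpaired" possibility is $h$ with $2h = g_j+g_k$.

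Next I would argue that $S$ is squarefree (all elements pairwise distinct). This is the analogue of the step in Lemma \ref{g2} where one reduces to $g^a(2g)^b$: if some term $h$ occurred with multiplicity $\geq 2$, I would perform a replacement $S\big(h((g_j+g_k)-h)\big)^{-1}(g_j+g_k)^2$-type substitution, using that this does not create short zero-sum subsequences (because any new zero-sum subsequence can be pulled back, via $\sigma(h\cdot((g_j+g_k)-h)) = \sigma((g_j+g_k)^2)$, to a zero-sum subsequence of the original $S$ of the same length, exactly as in the proof of Lemma \ref{g2}), and then invoke Lemma \ref{rank2} to get a contradiction with $r(G)\geq 3$ once the support has collapsed to rank $\leq 1$. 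Here one must be a little careful that the special terms $g_1,g_2,g_3$ survive the reductions; I would handle the terms of $T$ first, and treat collisions involving the $g_i$ separately using $g_j+g_k\notin\{g_i,2g_i\}$, which is itself something to be verified: if, say, $g_j+g_k = 2g_i$ then $S$ would contain a zero-sum subsequence $g_i^{-2}(g_j+g_k)\cdots$ of too-short length, or $g_j g_k (2g_i)^{-1}$ arguments give a contradiction; similarly $g_j+g_k = g_i$ is ruled out by exhibiting a short zero-sum subsequence. With squarefreeness in hand, and writing $T = \prod_{i=1}^{\ell'} a_i\cdot\prod a_i'$ where $a_i' = (g_1+g_2)-a_i$, the pairing forces $T$ to consist of $\ell$ such pairs together with at most one "central" term equal to $g_1+g_2$ when $\mathsf{D}(G)-1$ is odd (the $2a = g_1+g_2$ case); this gives $\ell = \frac{\mathsf{D}(G)-1-\sigma}{2}$ with $\sigma\in\{0,1\}$ and the first displayed form of $S$.

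Finally, the three displayed forms are obtained by symmetry: the argument applied with the pair $\{1,3\}$ in place of $\{1,2\}$ produces the second line, and $\{2,3\}$ the third; since these are all equal to $S$ as sequences, I would just assert the common refinement. I expect the main obstacle to be the bookkeeping in the squarefree-reduction step — ensuring that every replacement strictly decreases some well-founded quantity (e.g. $\mathsf{h}(S)$ or the number of distinct non-special elements) while provably never introducing a zero-sum subsequence of length $< \mathsf{D}(G)-1$, and while keeping $g_1,g_2,g_3$ and the inequalities $g_j+g_k\notin\{g_i,2g_i\}$ intact throughout — together with disentangling the edge cases where a replacement would merge a term of $T$ with one of the $g_i$. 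The rank-$\geq 3$ hypothesis (rather than $\geq 2$) is what leaves enough room to avoid these degeneracies, and I would use it precisely at the point where Lemma \ref{rank2} is invoked and where the three pairings are shown to be simultaneously nontrivial.
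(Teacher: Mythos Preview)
Your outline tracks the right ideas --- the pairing $h\mapsto (g_j+g_k)-h$ on $T$ (the paper's Claim~1), the exclusion $g_j+g_k\notin\{g_i,2g_i\}$ (Claim~2), and the need for squarefreeness --- but the squarefreeness step has a real gap. The substitution you propose, $S\big(h\cdot((g_j+g_k)-h)\big)^{-1}(g_j+g_k)^2$, does not preserve sums: the pair $h,\,(g_j+g_k)-h$ sums to $g_j+g_k$, while $(g_j+g_k)^2$ sums to $2(g_j+g_k)$, so zero-sum subsequences of the new sequence do not pull back to zero-sums of $S$ of the same length as you claim. Even with a sum-preserving repair (e.g.\ replacing the pair by $g_j\cdot g_k$), the terminal configuration is supported on $\{g_1,g_2,g_3,g_1+g_2,g_1+g_3,g_2+g_3\}$ and hence generates a subgroup of rank up to three; since Lemma~\ref{rank2} only forces rank $\geq 2$, no contradiction arises. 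The rank-collapse device of Lemma~\ref{g2} works there because a \emph{single} repeated element $g$ absorbs everything into $\langle g\rangle$; with three special elements there is no single cyclic target.

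The paper closes this gap by a different mechanism. It first proves (Claim~3) that $g_1,g_2,g_3\nmid T$ --- valid for \emph{any} decomposition $S=T'g_1'g_2'g_3'$ of the required shape --- by pairing a hypothetical $g_i\mid T$ with $(g_j+g_k)-g_i$ and then invoking Lemma~\ref{g2}. Squarefreeness (Claim~4) is then a bootstrap: if $\mathsf v_g(T)\geq 2$, rewrite $S=(T''g_jg_k)\cdot g\cdot(g_j+g_k-g)\cdot g_i$ with $T''=T\big(g\cdot(g_j+g_k-g)\big)^{-1}$, so that $T''g_jg_k$ is a new zero-sum block of length $\mathsf D(G)-1$ and $g$ has become one of the three special elements. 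After checking that $g,\,g_j+g_k-g,\,g_i$ are pairwise distinct (else a subsequence of the form $(\text{zero-sum block})\cdot h^2$ appears and Lemma~\ref{g2} finishes), Claim~3 applied to this new decomposition yields $g\nmid T''g_jg_k$, contradicting $g\mid T''$. This re-decomposition idea, together with the explicit Claim~3, is what your plan is missing. (A small side note: the unpaired term in $T$, when $\sigma=1$, is $g_j+g_k$ itself --- the element whose ``partner'' would be $0$ --- not a fixed point $2h=g_j+g_k$; the paper's Claim~1 in fact rules out such fixed points in $T$.)
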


\begin{proof}
Since $r(G)\geq 3$, we have that $G$ does not have the form $C_2\oplus C_{2m}$. By the assumed condition, it is also easy to see that $T$ is minimal zero-sum.

Claim $1:$ Let $i,j\in\{1,2,3\}$ and $i\neq j$. For any $g|T$ with $g\not\in \{g_i,g_j,g_i+g_j\}$ we have that $g(g_i+g_j-g)|T$ and $g\neq g_i+g_j-g$.

Obviously, every zero-sum subsequence of $Tg^{-1}g_ig_j$ has the length $\geq \mathsf{D}(G)-1$, $|Tg^{-1}g_ig_j|=\mathsf{D}(G)$ and $\sigma(Tg^{-1}g_ig_j)=g_i+g_j-g\neq 0$. Thus every zero-sum subsequence of $Tg^{-1}g_ig_j$ must have the length $\mathsf{D}(G)-1$, which implies that $\sigma(Tg^{-1}g_ig_j)=g_i+g_j-g|Tg^{-1}g_ig_j$.
Since $g\not\in \{g_i,g_j,g_i+g_j\}$, we have that $g_i+g_j-g\not\in \{g_i,g_j\}$. Hence, $g_i+g_j-g|Tg^{-1}$, i.e., $g(g_i+g_j-g)|T$.

If $g=g_i+g_j-g$, then $S=Tg_1g_2g_3=T_1g(g_i+g_j-g)g_1g_2g_3=(T_1g_ig_jg^2)g_k$, where $\{i,j,k\}=\{1,2,3\}$. Obviously, $T_1g_ig_j$ is a zero-sum subsequence of $S$ of length $\mathsf{D}(G)-1$ and every zero-sum subsequence of $T_1g_ig_jg^2$ has the length $\geq \mathsf{D}(G)-1$. By Lemma \ref{g2} we can get a contradiction.
The claim is true.

Claim $2:$ $g_j+g_k\not\in \{g_i,2g_i\}$ for $\{i,j,k\}=\{1,2,3\}$.

W.o.l.g., let $i=1$, $j=2$ and $k=3$.
If $2g_1=g_2+g_3$, then consider the sequence $Tg_1^2$. Note that $T$ is minimal zero-sum. Obviously, any zero-sum subsequence $T'$ of $Tg_1^2$ has one of the following forms: (1) $T$; (2) $T_1g_1$, where $T_1|T$ with $\sigma(T_1)=-g_1$; (3) $T_2g_1^2$, where $T_2|T$ with $\sigma(T_2)=-2g_1$. If (1) holds, then $|T'|=|T|=\mathsf{D}(G)-1$. If (2) holds, then obviously $T_1g_1|S$, which implies that $|T'|=|T_1g_1|\geq \mathsf{D}(G)-1$. If (3) holds, then $2g_1=g_2+g_3$ implies that $\sigma(T_2g_1^2)=\sigma(T_2g_2g_3)=0$. It follows that $T_2g_2g_3$ is a zero-sum subsequence of $S$. Thus $|T'|=|T_2g_1^2|=|T_2g_2g_3|\geq \mathsf{D}(G)-1$.
Hence, every zero-sum subsequence of $Tg_1^2$ has the length $\geq \mathsf{D}(G)-1$. This is impossible by Lemma \ref{g2}.

If $g_1=g_2+g_3$, then there must exist $g|T$ with $g\not\in \{g_1,g_2,g_3\}$. Otherwise, $S=g_1^ag_2^bg_3^c$ with $a,b,c$ being positive integers and $T=g_1^{a-1}g_2^{b-1}g_3^{c-1}$.  If $a-1>0$, then since $g_1=g_2+g_3$, we have that $Tg_1^{-1}g_2g_3$ is a zero-sum subsequence of $S$ of length $|T|+1=\mathsf{D}(G)$. By the assumed condition, it is easy to see that $Tg_1^{-1}g_2g_3$ is minimal zero-sum. It follows from Lemma \ref{G} that $\sum(Tg_1^{-1}g_2g_3)=G$. Thus $r(\langle S\rangle)=r(G)\geq 3$. This is a contradiction to that $r(\langle S\rangle)=r(\langle g_1,g_2,g_3\rangle)=r(\langle g_2+g_3,g_2,g_3\rangle)\leq 2$.
If $b-1>0$ and $c-1>0$, then $T(g_2g_3)^{-1}g_1$ is a zero-sum subsequence of $S$ of length $|T|-1=\mathsf{D}(G)-2$.  This is a contradiction to the assumed condition.
If either $a-1=0$, $b-1=0$ or $a-1=0$, $c-1=0$, then $T=g_2^{b-1}$ or $g_3^{c-1}$ is a minimal zero-sum subsequence of length $\mathsf{D}(G)-1$. Thus $\mathsf{D}(G)-1=\ord(g_2)$ or $\ord(g_3)$. It follows that $G$ must have the form $C_2\oplus C_{2m}$ with $C_{2m}=\langle g_2\rangle$ or $\langle g_3\rangle$. This is contradiction to $r(G)\geq 3$.
Combining $g\not\in \{g_2+g_3,g_2,g_3\}$ with Claim $1$ yields that $g(g_2+g_3-g)|T.$ Thus $S=Tg_1g_2g_3=g(g_2+g_3-g)T'g_2g_3(g_2+g_3)$. Obviously, $T'(g_2+g_3)$  is a zero-sum subsequence of $S$ of length $|T|-1=\mathsf{D}(G)-2$.  This is a contradiction to the assumed condition.
The claim is true.

Claim $3:$ $g_1,g_2,g_3\nmid T$.

If $g_3|T$, then Claim $2$ implies that $g_1+g_2\neq g_3$. Since $g_1,g_2,g_3$ are pairwise distinct, we have that $g_3\not\in \{g_1,g_2,g_1+g_2\}$. It follows from Claim $1$ that $g_3(g_1+g_2-g_3)|T$.
Thus we can set $S=Tg_1g_2g_3=T_1g_3(g_1+g_2-g_3)g_1g_2g_3=(T_1g_1g_2g_3^2)(g_1+g_2-g_3)$. Obviously, $T_1g_1g_2$ is a zero-sum subsequence of $S$ of length $\mathsf{D}(G)-1$ and every zero-sum subsequence of $T_1g_1g_2g_3^2$ has the length $\geq \mathsf{D}(G)-1$. By Lemma \ref{g2} we can get a contradiction. Similarly, $g_1,g_2\nmid T$.

Claim $4:$ For any $g|T$ we have that $\mathsf{v}_g(T)=1$.

If there exists some $g|T$ with $\mathsf{v}_g(T)\geq 2$, then Claim $3$ implies that $g\not\in \{g_1,g_2,g_3\}$.
If $g\neq g_1+g_2$, then it follows from Claim $1$ that $g(g_1+g_2-g)|T$ and $g\neq g_1+g_2-g$. Thus we can set $S=Tg_1g_2g_3=g(g_1+g_2-g)T'g_1g_2g_3=(T'g_1g_2)g(g_1+g_2-g)g_3$ with $g|T'$. Obviously, $T'g_1g_2$  is a zero-sum subsequence of $S$ of length $\mathsf{D}(G)-1$. In addition, $g,g_1+g_2-g,g_3$ must be pairwise distinct. Otherwise, $S$ has a subsequence with the form $(T'g_1g_2)g_x^2$. This together with $r(G)\geq 3$ and Lemma \ref{g2} yields that $(T'g_1g_2)g_x^2$ has a zero-sum subsequence with length at most $\mathsf{D}(G)-2$, a contradiction to the lemma’s assumed condition. By Claim $3$ we have that $g,g_1+g_2-g,g_3\nmid T'g_1g_2$. Hence, $g
\nmid T'$, a contradiction. Hence, $\mathsf{v}_g(T)=1$ for any $g|T$ with $g\neq g_1+g_2$.
Similarly, $\mathsf{v}_g(T)=1$ for any $g|T$ with $g\not\in \{g_1+g_3,g_2+g_3\}$.
Since $g_1,g_2,g_3$ are pairwise distinct, we have that $g_1+g_2\not\in \{g_1+g_3,g_2+g_3\}$. Hence, $\mathsf{v}_{g_1+g_2}(T)=1$ if $g_1+g_2|T$.
The claim is true.

\

By the above claims, we can easily see that $S$ must have the following form
\begin{equation}
\begin{aligned}
\label{S1}
S&=\prod_{i=1}^{\ell_1}a_i(g_1+g_2-a_i)(g_1+g_2)^{\sigma_1}g_1g_2g_3 \\
&=\prod_{i=1}^{\ell_2}b_i(g_1+g_3-b_i)(g_1+g_3)^{\sigma_2}g_1g_2g_3 \\
&=\prod_{i=1}^{\ell_3}c_i(g_2+g_3-c_i)(g_2+g_3)^{\sigma_3}g_1g_2g_3
\end{aligned}
\end{equation}
satisfying that all elements of $S$ are pairwise distinct,
where each $\sigma_i=0$ or $1$.
Since $|S|=2\ell_1+\sigma_1+3=2\ell_2+\sigma_2+3=2\ell_3+\sigma_3+3=\mathsf{D}(G)+2$ and $\sigma_i=0$ or $1$, one can easily see that $\sigma:=\sigma_1=\sigma_2=\sigma_3=0$ or $1$ and $\ell:=\ell_1=\ell_2=\ell_3=\frac{\mathsf{D}(G)-1-\sigma}{2}$.
We complete the proof.
\end{proof}

\textit{The proof of Theorem \ref{D(G)-2}:}
If $r(G)=2$, then by Lemma \ref{col} Part (iv)  we are done. Hence, we can suppose $r(G)\geq 3$.
Note that $\mathsf{D}(G)-2\geq \exp(G)\geq 2$. It is easy to see that $\mathsf{D}(G)-2=2$ and $r(G)\geq 2$ iff $G\cong C_2^3$. Hence,  $\mathsf{D}(G)-2\geq 3$, i.e., $\mathsf{D}(G)\geq 5$. If $\mathsf{D}(G)=5$, then by a simple analysis, we have that either $r(G)\leq 2$ or $G\cong C_2^4$, a contradiction to $r(G)\geq 3$ and $G\not\cong C_2^4$. Hence, $$\mathsf{D}(G)\geq 6.$$

Let $S$ be a sequence over $G$ of length $\mathsf{D}(G)+2$.
Suppose that every zero-sum subsequence of $S$ has the length $\geq \mathsf{D}(G)-1$.
Since $\mathsf{s}_{\leq \mathsf{D}(G)-1}(G)=\mathsf{D}(G)+1<|S|$, there must exist a zero-sum subsequence in $S$ of length $\mathsf{D}(G)-1$. Thus we can set $$S=Tg_1g_2g_3,$$ where $T$ is a minimal zero-sum subsequence of $S$ of length $\mathsf{D}(G)-1$. By Lemma \ref{g2} we must have that $g_1,g_2,g_3$ are pairwise distinct.
It follows from Lemma \ref{disg} that $S$ must have the form (\ref{S}) with $\sigma=0$ or $1$ and $|S|=2\ell+\sigma+3=\mathsf{D}(G)+2\geq 8$, which implies that $\ell\geq 2$.

If $\sigma=1$, then since $g_1,g_2,g_3$ are pairwise distinct, we have that $g_1+g_2,g_1+g_3,g_2+g_3$ are also pairwise distinct. By (\ref{S}) we can suppose that $g_1+g_3=a_i, g_2+g_3=b_j, g_1+g_2=c_k$ for some $i,j,k\in [1,\ell]$, which implies that $T$ contains $g_1+g_2-a_i=g_2-g_3, g_1+g_3-b_j=g_1-g_2, g_2+g_3-c_k=g_3-g_1$. By Claim $2$ of Lemma \ref{disg} we have that $g_j+g_k\neq 2g_i$ for $\{i,j,k\}=\{1,2,3\}$.
Thus we must have that $g_1-g_2,g_2-g_3,g_3-g_1$ are pairwise distinct. Hence, $(g_1-g_2)(g_2-g_3)(g_3-g_1)$ is a zero-sum subsequence of $S$ of length $3\leq \mathsf{D}(G)-2$. This is a contradiction to the hypothesis.

If $\sigma=0$, then by (\ref{S}) $T$ can be viewed as a subset of $G$. In addition, we have that $g_j+g_k-g\in T$ and $g\neq g_j+g_k-g$ for any $g\in T$ and  $j,k\in\{1,2,3\}$ with $j\neq k$. Thus
$$\{g_1+g_2,g_1+g_3,g_2+g_3\}-T=T.$$
It follows that $T=g_{i_1}+g_{j_1}-T=g_{i_1}+g_{j_1}-(g_{i_2}+g_{j_2}-T)=(g_{i_1}+g_{j_1})-(g_{i_2}+g_{j_2})+T$ for any $\{i_1,j_1\},\{i_2,j_2\}\in \{\{1,2\},\{1,3\},\{2,3\}\}$. Obviously,
\begin{align*}
&\{(g_{i_1}+g_{j_1})-(g_{i_2}+g_{j_2}):\{i_1,j_1\},\{i_2,j_2\}\in \{\{1,2\},\{1,3\},\{2,3\}\}\}\\
&=\{g_i-g_j:i,j\in\{1,2,3\}\}.
\end{align*}
Hence, $$T=g_i-g_j+T \ \text{for any}\  i,j\in\{1,2,3\}.$$
Set $H=\langle g_1-g_2,g_1-g_3\rangle$.
It follows from Lemma \ref{Stab} that
$$T=T+H=\bigcup_{i=1}^{t'}(w_i+g_1+H).$$
Note that $g_1+H=g_2+H=g_3+H$.
For any $h\in H$ and $i\in [1,t']$, we have that
\begin{align*}
&g_j+g_k-(w_i+g_1+h)=-w_i+g_j+(g_k-g_1-h)\\
&\in -w_i+g_j+(g_k-g_1+H)=-w_i+g_j+H=-w_i+g_1+H,
\end{align*}
where $j,k\in\{1,2,3\}$ with $j\neq k$.
Since $g_j+g_k-g\in T$ for any $g\in T$ and  $j,k\in\{1,2,3\}$ with $j\neq k$, we can set
\begin{equation*}
\begin{aligned}
T=\bigcup_{x=1}^{t}\{v_{x}+g_1+H,-v_{x}+g_1+H\}\bigcup_{y=1}^{s}(u_{y}+g_1+H),
\end{aligned}
\end{equation*}
where each $2v_x\not\in H$ and $2u_y\in H$.
It is easy to see that
$$|T|=2t|H|+s|H|=\mathsf{D}(G)-1,$$
and for any $h\in H$ and $i,j\in\{1,2,3\}$ with $i\neq j$
\begin{align*}
&g_i+g_j-(v_x+g_1+h)\in -v_x+g_1+H,\\
&g_i+g_j-(u_y+g_1+h)\in u_y+g_1+H.
\end{align*}
In addition, if  $s>0$, then since $g\neq g_j+g_k-g$ for any $g\in T$, we must have that $|H|$ is even.
Denote by $H_1$ the set of elements of order $\leq 2$ in $H$. Thus we can write $$H=H_1\uplus A\uplus (-A).$$
Since $r(H)=r(\langle g_1-g_2,g_1-g_3\rangle)\leq 2$, we must have that $H_1\cong C_2$ or $C_2^2$ if $|H_1|>1$.
Since each $2u_y\in H$, there must exist some $e_y\in G$ of order $\leq 2$ such that $u_y+H=e_y+H$.
Thus
\begin{equation}
\label{T}
\begin{aligned}
T=&\bigcup_{x=1}^{t}\{v_{x}+g_1+H,-v_{x}+g_1+H\}\bigcup_{y=1}^{s}(u_{y}+g_1+H)\\
=&\bigcup_{x=1}^{t}\{v_{x}+g_1+H,-v_{x}+g_1+H\}\bigcup_{y=1}^{s}(u_{y}+g_1+(H_1\uplus A\uplus (-A)))\\
=&\bigcup_{x=1}^{t}\bigcup_{h\in H}\{v_{x}+g_1+h,-v_{x}+g_1-h\} \\
&\bigcup_{y=1}^{s}\bigcup_{h\in A}\{e_y+g_1+h,e_y+g_1-h\}\bigcup_{y=1}^{s}\bigcup_{h\in H_1}(e_y+g_1+h).
\end{aligned}
\end{equation}
Obviously, $\sum_{h\in H_1}(e_y+g_1+h)=2g_1+e$ if $H_1=\langle e\rangle\cong C_2$; $\sum_{h\in H_1}(e_y+g_1+h)=4g_1$ if $H_1\cong C_2^2$.
Note that $T$ is minimal zero-sum. Thus, it follows from (\ref{T}) that
\begin{equation}
\label{T'}
\begin{aligned}
T':&=\prod_{x=1}^{t}(2g_1)^{|H|}\prod_{y=1}^{s}(2g_1)^{|A|}\prod_{y=1}^{s}\sum_{h\in H_1}(e_y+g_1+h)\\
&=\left\{
   \begin{aligned}
&(2g_1)^{t|H|}, \ &&\text{ if $s=0$}\,,\\
   &(2g_1)^{t|H|+s|A|}(2g_1+e)^s, \ &&\text{ if $s> 0$ and $H_1=\langle e\rangle\cong C_2$}\,,\\
  &(2g_1)^{t|H|+s|A|}(4g_1)^s, &&\text{ if $s> 0$ and $H_1\cong C_2^2$}\,,
   \end{aligned}
   \right.
\end{aligned}
\end{equation}
is also minimal zero-sum.

In the following, we will distinguish two cases to complete our proof.

Case $1:$ $t|H|+s|A|>0$.

If $H_1=\langle e\rangle\cong C_2$ and $2\nmid s$, then $|A|=\frac{|H|-|H_1|}{2}=\frac{|H|-2}{2}$ and $\sigma(T')=(t|H|+s|A|+s)(2g_1)+e=(t+\frac{s}{2})|H|(2g_1)+e=0$, i.e., $e=(t+\frac{s}{2})|H|(2g_1)=(2t+s)|H|g_1\in \langle 2g_1\rangle$.
This together with $2t|H|+s|H|=\mathsf{D}(G)-1$ yields that $2\Bigm|\ord(2g_1) \Bigm| 2(t+\frac{s}{2})|H|=\mathsf{D}(G)-1$, which implies that $\ord(g_1)=2\ord(2g_1)$.
If $\ord(2g_1)\geq \frac{\mathsf{D}(G)-1}{2}$, then combining $r(G)\geq 3$ yields that $\exp(G)\geq\ord(g_1)=2\ord(2g_1)\geq\mathsf{D}(G)-1\geq \exp(G)+1.$ This is a contradiction.
Thus we can suppose that $\ord(2g_1)\leq\frac{\mathsf{D}(G)-3}{2}$ (since $2\mid \mathsf{D}(G)-1$).
By (\ref{T'}) we have that
$$T'':=(2g_1)^{t|H|+s|A|}(2g_1+e)^{s-1}=(2g_1)^{\frac{\mathsf{D}(G)-1}{2}-s}(2g_1+e)^{s-1}$$ is zero-free with $2|s-1$. Since  $t|H|+s|A|>0$, we have that $$\sum(T'')\supset \{i(2g_1):i\in [1,\frac{\mathsf{D}(G)-3}{2}]\}.$$
It follows that $\ord(2g_1)>\frac{\mathsf{D}(G)-3}{2}$. This is a contradiction to $\ord(2g_1)\leq\frac{\mathsf{D}(G)-3}{2}$.

If $s=0$ or $H_1=\langle e\rangle\cong C_2$ with $2|s$  or $H_1\cong C_2^2$, then it follows from (\ref{T'}) that
\begin{equation}
\label{T''}
\begin{aligned}
T'':=\left\{
   \begin{aligned}
&(2g_1)^{t|H|+s|A|}, \ &&\text{ if $s=0$}\,,\\
   &(2g_1)^{t|H|+s|A|}(4g_1)^{\frac{s}{2}}, \ &&\text{ if $s> 0$ and $H_1=\langle e\rangle\cong C_2$ with $2|s$}\,,\\
  &(2g_1)^{t|H|+s|A|}(4g_1)^s, &&\text{ if $s> 0$ and $H_1\cong C_2^2$}\,,
   \end{aligned}
   \right.
\end{aligned}
\end{equation}
is minimal zero-sum.
Since $t|H|+s|A|>0$ and $2t|H|+s|H|=\mathsf{D}(G)-1$, by a simple caculation we have that $$\sum(T'')=\{i(2g_1):i\in [1,\frac{\mathsf{D}(G)-1}{2}]\}.$$ Combining the minimality of $T''$ yields that $$\ord(2g_1)=\frac{\mathsf{D}(G)-1}{2}.$$
It follows that $$\ord(g_1)=\mathsf{D}(G)-1\ \text{or}\ \frac{\mathsf{D}(G)-1}{2}.$$
If $\ord(g_1)=\mathsf{D}(G)-1$, then we must have that $G$ has the form $C_2\oplus C_{2m}$, a contradiction to $r(G)\geq 3$.
If $\ord(g_1)=\frac{\mathsf{D}(G)-1}{2}$, then $\ord(g_1)=\ord(2g_1)$, which implies that $\frac{\mathsf{D}(G)-1}{2}$ must be odd.
It follows that $\sigma((2g_1)^{\frac{\mathsf{D}(G)-3}{4}}g_1)=0$.
Since $\sum(T)\supset\sum(T'')=\{i(2g_1):i\in [1,\frac{\mathsf{D}(G)-1}{2}]\}$, by the structure (\ref{T}) of $T$, there must exist a subsequence $T_1|T$ of length $\frac{\mathsf{D}(G)-3}{2}$ such that $\sigma(T_1)=\sigma((2g_1)^{\frac{\mathsf{D}(G)-3}{4}})=\frac{\mathsf{D}(G)-3}{2}g_1$. It follows from $S=Tg_1g_2g_3$ that $T_1g_1$ is a zero-sum subsequence of $S$ with $|T_1g_1|=\frac{\mathsf{D}(G)-3}{2}+1<\mathsf{D}(G)-1$. This is a contradiction to the
hypothesis of $S$.

Case $2:$ $t|H|+s|A|=0$.

If $s=0$, then by imitating the proof of Case 1 we can get a contradiction.
Hence, we must have that $s>0$ and $t=|A|=0$, which implies that $H=H_1$ and $|T|=2t|H|+s|H|=s|H_1|=\mathsf{D}(G)-1$.
If $H_1=\langle e\rangle\cong C_2$, then $H=\langle g_1-g_2,g_1-g_3\rangle=\langle e\rangle$. Since $g_1,g_2,g_3$ are pairwise distinct, it follows that $g_1-g_2,g_1-g_3$ are distinct and both nonzero. This is impossible.
Hence, $H=H_1\cong C_2^2$ and $s=\frac{\mathsf{D}(G)-1}{4}$. It follows from \ref{T'} that
$$T'=(4g_1)^{\frac{\mathsf{D}(G)-1}{4}}$$ is minimal zero-sum, i.e., $\ord (4g_1)=\frac{\mathsf{D}(G)-1}{4}=s$.
It is easy to see that $$\ord (4g_1)=\frac{\mathsf{D}(G)-1}{4}=s\ \text{must be odd},$$ otherwise, $\ord(g_1)=2\ord(2g_1)=4\ord(4g_1)=\mathsf{D}(G)-1$, a contradiction to $r(G)\geq 3$.
Since $H=\langle g_1-g_2,g_1-g_3\rangle\cong C_2^2$, and $g_1-g_2,g_1-g_3$ are distinct and both nonzero, we have that $g_1-g_2=e_1$ and $g_1-g_3=e_2$ are of order $2$ and constitute a basis of $H$.
It follows from (\ref{T}) that
$$S=g_1g_2g_3T=g_1(g_1+e_1)(g_1+e_2)\prod_{y=1}^{s}(e_y+g_1)(e_y+g_1+e_1)(e_y+g_1+e_2)(e_y+g_1+e_1+e_2)$$ with $s=\frac{\mathsf{D}(G)-1}{4}\geq 2$ odd (since $\mathsf{D}(G)\geq 6$).
Obviously, $\ord (g_1)=\ord (4g_1)=\frac{\mathsf{D}(G)-1}{4}$ or $\ord (g_1)=2\ord (4g_1)=\frac{\mathsf{D}(G)-1}{2}$ or $\ord (g_1)=4\ord (4g_1)=\mathsf{D}(G)-1$.
Since $r(G)\geq 3$, we have that $\ord (g_1)\neq\mathsf{D}(G)-1$.
Hence, $$\ord (g_1)=\frac{\mathsf{D}(G)-1}{4}\ \text{or} \ \frac{\mathsf{D}(G)-1}{2}.$$

If $\ord (g_1)=\frac{\mathsf{D}(G)-1}{2}$, then since $s\geq 2$ is odd, we have that $0\leq \frac{s-3}{2}<s-1$ and
\begin{align*}
&(g_1+e_1)(g_1+e_2)(e_s+g_1)(e_s+g_1+e_1)(e_{s-1}+g_1)(e_{s-1}+g_1+e_2)\\
&\prod_{y=1}^{\frac{s-3}{2}}(e_y+g_1)(e_y+g_1+e_1)(e_y+g_1+e_2)(e_y+g_1+e_1+e_2)
\end{align*}
is a zero-sum subsequence of $S$ of length $2s=\frac{\mathsf{D}(G)-1}{2}<\mathsf{D}(G)-1$. This is a contradiction to the
hypothesis of $S$.

If $\ord (g_1)=\frac{\mathsf{D}(G)-1}{4}$ and $s=\frac{\mathsf{D}(G)-1}{4}\equiv 1 \mod 4$, then $s\geq 5$ and
\begin{align*}
&g_1(g_1+e_1)(g_1+e_2)(e_s+g_1+e_1)(e_s+g_1+e_2)\\
&\prod_{y=1}^{\frac{s-5}{4}}(e_y+g_1)(e_y+g_1+e_1)(e_y+g_1+e_2)(e_y+g_1+e_1+e_2)
\end{align*}
is a zero-sum subsequence of $S$ of length $s=\frac{\mathsf{D}(G)-1}{4}<\mathsf{D}(G)-1$. This is a contradiction to the
hypothesis of $S$.

If $\ord (g_1)=\frac{\mathsf{D}(G)-1}{4}$ and $s=\frac{\mathsf{D}(G)-1}{4}\equiv 3 \mod 4$, then $s\geq 3$ and
$$(g_1+e_1)(e_s+g_1)(e_s+g_1+e_1)
\prod_{y=1}^{\frac{s-3}{4}}(e_y+g_1)(e_y+g_1+e_1)(e_y+g_1+e_2)(e_y+g_1+e_1+e_2)$$
is a zero-sum subsequence of $S$ of length $s=\frac{\mathsf{D}(G)-1}{4}<\mathsf{D}(G)-1$. This is a contradiction to the
hypothesis of $S$.
We complete the proof of the first assertion.

The second assertion immediately follows from Theorem \ref{lower} and \ref{D(G)-2}.
\qed

\section{Zero-sum subsequences of zero-sum sequences}

In \cite{FWe11},  Gao et.al. considered a problem on short zero-sum subsequences of zero-sum sequences over finite abelian groups. For more, one can see \cite{L17}. In this section, we study a problem on zero-sum subsequences of length $\leq k$ in zero-sum sequences over finite abelian $p$-groups. This problem is useful for the determination of $\mathsf{s}_{\leq k}(G)$ in next section.

We will use the following lemmas repeatly.
\begin{lem}[Lucas’ Theorem, \cite{L78}] \label{luc}
Let $a$, $b$ be positive integers with $a = a_np^n +\ldots + a_1p + a_0$ and $b = b_np^n +\ldots + b_1p + b_0$ be the $p$-adic expansions, where $p$ is a prime. Then
$$\binom{a}{b}\equiv \binom{a_n}{b_n}\binom{a_{n-1}}{b_{n-1}}\cdots\binom{a_0}{b_0}\ \mod \ p.$$
\end{lem}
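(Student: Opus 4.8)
\textit{Proof proposal.}
The plan is to prove this classical congruence by a generating-function argument in the polynomial ring $\mathbb{F}_p[x]$. The starting observation is the "freshman's dream": since $p \mid \binom{p}{j}$ for every $j \in [1, p-1]$, we have $(1+x)^p \equiv 1 + x^p \pmod p$ in $\mathbb{F}_p[x]$. Raising this to the $p$-th power repeatedly (and using that the Frobenius endomorphism $y \mapsto y^p$ is a ring homomorphism on $\mathbb{F}_p[x]$), I would obtain $(1+x)^{p^i} \equiv 1 + x^{p^i} \pmod p$ for every $i \geq 0$.

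Next, using the $p$-adic expansion $a = a_n p^n + \ldots + a_1 p + a_0$, I would write
$$(1+x)^a = \prod_{i=0}^{n}\bigl((1+x)^{p^i}\bigr)^{a_i} \equiv \prod_{i=0}^{n}\bigl(1 + x^{p^i}\bigr)^{a_i} = \prod_{i=0}^{n}\sum_{j_i=0}^{a_i}\binom{a_i}{j_i}x^{j_i p^i} \pmod p.$$
The coefficient of $x^b$ on the left-hand side is, by definition, $\binom{a}{b} \bmod p$ (taken as $0$ when $b > a$). On the right-hand side, a monomial $x^b$ arises from choices $0 \leq j_i \leq a_i \leq p-1$ with $\sum_i j_i p^i = b$; by the uniqueness of the base-$p$ representation of $b$, the only such choice is $j_i = b_i$ for all $i$, which contributes the product $\prod_i \binom{a_i}{b_i}$. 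Comparing coefficients of $x^b$ on both sides then yields $\binom{a}{b} \equiv \prod_{i=0}^{n}\binom{a_i}{b_i} \pmod p$.

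The only points that need a little care — and they are the closest thing here to an obstacle — are the bookkeeping at the boundary: if $b$ has a digit $b_i > a_i$, or if $b > a$ outright, then no admissible choice of the $j_i$ produces $x^b$, so the right-hand coefficient is $0$, and this matches the convention $\binom{a_i}{b_i} = 0$ (resp.\ $\binom{a}{b}=0$), so the identity still holds. Since the statement only claims a congruence mod $p$, one may alternatively invoke the cited reference \cite{L78} and skip the argument entirely; I include the sketch above for completeness. \qed
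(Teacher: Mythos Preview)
Your proof is correct: the generating-function argument via $(1+x)^{p^i}\equiv 1+x^{p^i}\pmod p$ and coefficient comparison is the standard modern proof of Lucas' theorem, and your handling of the boundary cases (digits $b_i>a_i$, or $b>a$) is fine.

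There is nothing to compare against in the paper itself: the lemma is stated with a citation to \cite{L78} and no proof is given, as is customary for a classical result of this type. Your inclusion of a short self-contained argument is therefore strictly more than what the paper does; if matching the paper is the goal, you could simply cite the reference and omit the sketch.
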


For any integer $k\in \mathbb{Z}_{>0}$, $g\in G$, and a sequence $S$ over $G$, we define
$$\mathsf{N}_{g}^{k}(S) =\big |\big\{I\subset [1, |S|] :\sum_{i\in I}
g_i = g, |I| = k\big\}\big |,$$
which denotes the number of subsequences $T$ of $S$ having sum $\sigma(T) = g$ and length
$|T| = k$ (counted with the multiplicity of their appearance in $S$).
Denote by $\mathsf{N}_{g}^{+}(S)$ ($\mathsf{N}_{g}^{-}(S)$) the number of subsequences $T$ of $S$ of even (odd) length having sum $\sigma(T) = g$ (each counted with
the multiplicity of its appearance in $S$).
If $g=0$, we also write $\mathsf{N}_{0}^{k}(S)$ as $\mathsf{N}^{k}(S)$.

\begin{lem}[\cite{GH06}, Proposition 5.5.8] \label{lem1}
Let $p$ be a prime, let $G$ be an abelian $p$-group, and let  $S=g_1 \boldsymbol{\cdot}\ldots\boldsymbol{\cdot} g_\ell \in \mathscr {F}(G)$.
If $\ell\geq \mathsf{D}(G)$, then $\mathsf{N}_{g}^{+}(S)\equiv \mathsf{N}_{g}^{-}(S) \mod p$
for all $g\in G$. In particular, $\mathsf{N}_{0}^{+}(S)\equiv \mathsf{N}_{0}^{-}(S) \mod p$.
\end{lem}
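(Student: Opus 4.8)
The plan is to translate the claimed congruences into a single vanishing statement inside the group algebra $R=\mathbb{F}_p[G]$, and then exploit that the augmentation ideal of $R$ is nilpotent of index at most $\mathsf{D}(G)$. Writing $[g]\in R$ for the basis element attached to $g\in G$ (so $[g][h]=[g+h]$), I would first expand
$$\prod_{i=1}^{\ell}\bigl(1-[g_i]\bigr)=\sum_{I\subseteq[1,\ell]}(-1)^{|I|}\Bigl[\sum_{i\in I}g_i\Bigr]=\sum_{g\in G}\bigl(\mathsf{N}_{g}^{+}(S)-\mathsf{N}_{g}^{-}(S)\bigr)[g]\qquad\text{in }R,$$
grouping the subsequences of $S$ by their sum and noting that an index set of even (resp.\ odd) size contributes with sign $+1$ (resp.\ $-1$). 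Thus the assertion ``$\mathsf{N}_{g}^{+}(S)\equiv\mathsf{N}_{g}^{-}(S)\bmod p$ for all $g$'' is \emph{equivalent} to $\prod_{i=1}^{\ell}(1-[g_i])=0$ in $R$, and the ``in particular'' statement is the $g=0$ case.

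Next I would bound the nilpotency index of the augmentation ideal $\mathcal{I}=\ker\bigl(\varepsilon\colon R\to\mathbb{F}_p,\ [g]\mapsto 1\bigr)$. Each factor $1-[g_i]$ lies in $\mathcal{I}$, so $\prod_{i=1}^{\ell}(1-[g_i])\in\mathcal{I}^{\ell}$, and it suffices to prove $\mathcal{I}^{\mathsf{D}(G)}=0$. Writing $G=C_{p^{a_1}}\oplus\cdots\oplus C_{p^{a_r}}$ with generators $e_1,\dots,e_r$, the ``freshman's dream'' gives $([e_i]-1)^{p^{a_i}}=[e_i]^{p^{a_i}}-1=[0]-1=0$ in characteristic $p$, so the assignment $X_i\mapsto[e_i]-1$ defines a surjective $\mathbb{F}_p$-algebra map $\mathbb{F}_p[X_1,\dots,X_r]/(X_1^{p^{a_1}},\dots,X_r^{p^{a_r}})\twoheadrightarrow R$; comparing $\mathbb{F}_p$-dimensions (both sides equal $\prod_i p^{a_i}=|G|$) shows it is an isomorphism. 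Under this identification $\mathcal{I}=(X_1,\dots,X_r)$, and every monomial $X_1^{b_1}\cdots X_r^{b_r}$ with $\sum_i b_i\geq 1+\sum_i(p^{a_i}-1)=\mathsf{D}^{*}(G)$ must, by pigeonhole, have some $b_i\geq p^{a_i}$ and therefore vanish; hence $\mathcal{I}^{\mathsf{D}^{*}(G)}=0$.

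To conclude, since $\ell\geq\mathsf{D}(G)\geq\mathsf{D}^{*}(G)$ (the last inequality being trivial), one obtains $\prod_{i=1}^{\ell}(1-[g_i])\in\mathcal{I}^{\ell}\subseteq\mathcal{I}^{\mathsf{D}^{*}(G)}=0$, which is exactly the desired congruence for every $g\in G$; specializing to $g=0$ gives the final claim. I expect the only genuinely technical point to be the verification that $X_i\mapsto[e_i]-1$ induces an isomorphism onto $R$ — well-definedness coming from the characteristic-$p$ identity and bijectivity from the dimension count; everything else is bookkeeping. It is worth remarking that this route avoids Olson's theorem (Lemma \ref{D(G)}(b)) altogether, using only $\mathsf{D}(G)\geq\mathsf{D}^{*}(G)$; an alternative purely combinatorial argument, by a double induction on $\ell$ (peeling off one term to reduce to length exactly $\mathsf{D}(G)$) and on $|G|$, would also work but is considerably messier.
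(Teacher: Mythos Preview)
Your proof is correct. The paper does not supply its own argument for this lemma; it simply quotes it as Proposition~5.5.8 of \cite{GH06}. What you have written is essentially the classical group-algebra proof found there (originating with Olson): expand $\prod_{i=1}^{\ell}(1-[g_i])$ in $\mathbb{F}_p[G]$, identify the $[g]$-coefficient as $\mathsf{N}_g^{+}(S)-\mathsf{N}_g^{-}(S)$, and use that the augmentation ideal has nilpotency index $\mathsf{D}^*(G)$ via the isomorphism $\mathbb{F}_p[G]\cong\mathbb{F}_p[X_1,\dots,X_r]/(X_i^{p^{a_i}})$. Your observation that only the trivial bound $\mathsf{D}(G)\geq\mathsf{D}^*(G)$ is needed (rather than the equality $\mathsf{D}(G)=\mathsf{D}^*(G)$ for $p$-groups) is also accurate and worth keeping.
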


\begin{lem} \label{matrix0}
Let $x$ be an integer and $c,k,u_1,u_2,\lambda$ be positive integers. Let
\begin{equation*}
A=\left(
  \begin{array}{cccccc}
    1+x & 1 & 1 & \cdots & 1\\
    {{c+u_1 \choose 1}} & {{c+u_2 \choose 1}} & {{c+k \choose 1}} & \cdots & {{c \choose 1}}\\
    {\vdots} & {\vdots} & {\vdots} & {\vdots} & {\vdots}\\
    {{c+u_1 \choose \lambda}} & {{c+u_2 \choose \lambda}} & {{c+k \choose \lambda}} & \cdots & {{c \choose \lambda}}\\
  \end{array}
\right)_{(\lambda+1)\times (k+3)}.
\end{equation*}
Then by some row transformations, $A$ can be brought to the following form
\begin{equation*}
A'=\left(
  \begin{array}{ccccccc}
    {{u_1 \choose 0}}+(-1)^0x\times {{c \choose 0}} & {{u_2 \choose 0}} & {{k \choose 0}} & \cdots & {{0 \choose 0}}\\
    {{u_1 \choose 1}}+(-1)^1x\times {{c \choose 1}} & {{u_2 \choose 1}} & {{k \choose 1}} & \cdots & {{0 \choose 1}}\\
    {{u_1 \choose 2}}+(-1)^2x\times {{c+1 \choose 2}}& {{u_2 \choose 2}} & {{k \choose 2}} & \cdots & {{0 \choose 2}}\\
    {\vdots} & {\vdots} & {\vdots} & {\vdots} & {\vdots}\\
    {{u_1 \choose \lambda}}+(-1)^{\lambda}x\times {{\lambda+c-1 \choose \lambda}} & {{u_2 \choose \lambda}} & {{k \choose \lambda}} & \cdots & {{0 \choose \lambda}}\\
  \end{array}
\right).
\end{equation*}
\end{lem}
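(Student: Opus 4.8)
The plan is to realize the asserted row operations as left multiplication by the inverse of a unipotent lower‑triangular Pascal matrix, so that the lemma reduces to one classical binomial identity. To set this up, I would first describe the columns of $A$: apart from the single extra summand $x$ sitting in its top entry, the first column is the vector $\bigl(\binom{c+u_1}{j}\bigr)_{0\le j\le\lambda}$ (using $\binom{c+u_1}{0}=1$); the second column is $\bigl(\binom{c+u_2}{j}\bigr)_{0\le j\le\lambda}$; and the remaining $k+1$ columns are $\bigl(\binom{c+m}{j}\bigr)_{0\le j\le\lambda}$ for $m=k,k-1,\dots,1,0$. Thus $A$ has $2+(k+1)=k+3$ columns, as stated, and $A'$ is obtained from $A$ by replacing every $\binom{c+m}{j}$ with $\binom{m}{j}$ and the lone corner entry $x$ with the column of entries $(-1)^{j}x\binom{c+j-1}{j}$.

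Next I would record the identity
$$\binom{n}{j}=\sum_{i=0}^{j}(-1)^{j-i}\binom{c+j-i-1}{j-i}\binom{c+n}{i}\qquad(0\le j\le\lambda),$$
valid for every integer $n$. One clean proof: the matrices $P=\bigl(\binom{c}{j-i}\bigr)_{0\le i,j\le\lambda}$ and $L=\bigl((-1)^{j-i}\binom{c+j-i-1}{j-i}\bigr)_{0\le i,j\le\lambda}$ represent multiplication by $(1+t)^{c}$ and by $(1+t)^{-c}$ on $\Z[t]/(t^{\lambda+1})$ in the basis $1,t,\dots,t^{\lambda}$, so $LP=I$, and the displayed identity is just $L$ applied to the Vandermonde expansion $\binom{c+n}{i}=\sum_{\ell}\binom{c}{\ell}\binom{n}{i-\ell}$. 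Equivalently, after reindexing it follows from $\sum_{a+b=s}\binom{-c}{a}\binom{c}{b}=\binom{0}{s}$ together with $(-1)^{a}\binom{c+a-1}{a}=\binom{-c}{a}$.

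Finally I would carry out, for $j=\lambda,\lambda-1,\dots,1$ (equivalently: left multiply $A$ by $L$, which is unipotent lower triangular and hence a product of transvections), the operation that replaces row $j$ by $\sum_{i=0}^{j}(-1)^{j-i}\binom{c+j-i-1}{j-i}\,(\text{row }i)$. Because the coefficient of row $j$ itself is $1$ and only rows of strictly smaller index are invoked, this is a composition of moves of the form ``add a scalar multiple of one row to another'' — in particular no row is rescaled, which is what ``row transformations'' should mean here. By the identity above, in each column $\bigl(\binom{c+m}{j}\bigr)_j$ the entry of row $j$ becomes $\binom{m}{j}$; this handles columns $2,\dots,k+3$ and produces exactly $\binom{u_2}{j}$ and $\binom{k}{j},\dots,\binom{0}{j}$. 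In the first column the same computation gives $\binom{u_1}{j}$ plus the contribution of the stray $x$, and since $x$ occurs only in row $0$ it is multiplied only by the $i=0$ coefficient $(-1)^{j}\binom{c+j-1}{j}$; so the row‑$j$ entry of the first column becomes $\binom{u_1}{j}+(-1)^{j}x\binom{c+j-1}{j}$, matching $A'$ (note $\binom{c+j-1}{j}=\binom{c}{j}$ when $j\in\{0,1\}$, in agreement with its first two displayed rows). This is precisely $A'$. I do not expect a real obstacle: the whole content is the Pascal‑matrix inversion of the second paragraph, and the only care‑point is tracking the single entry $x$ correctly through the triangular reduction and confirming the reduction uses no row rescaling, the latter being automatic since $L$ is unipotent.
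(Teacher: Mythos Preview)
Your argument is correct. The key identity
\[
\sum_{i=0}^{j}(-1)^{j-i}\binom{c+j-i-1}{j-i}\binom{c+m}{i}
=\sum_{a=0}^{j}\binom{-c}{a}\binom{c+m}{j-a}=\binom{m}{j}
\]
is exactly Vandermonde with a negative upper index, and your tracking of the stray $x$ through the $i=0$ coefficient is accurate. The observation that $L$ is unipotent lower triangular guarantees that the transformation decomposes into genuine transvections, so no rescaling is hidden.

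The paper takes a more computational route: it performs $c$ outer passes, in each of which it sweeps through the rows top to bottom applying Pascal's rule $\binom{n}{i}-\binom{n-1}{i-1}=\binom{n-1}{i}$, producing an explicit chain of intermediate matrices $A_{l,i}$. In effect the paper is factoring your matrix $L$ as a product of $c\lambda$ elementary transvections and verifying the result step by step. Your approach is cleaner and more conceptual: recognising the whole transformation as left multiplication by the inverse Pascal matrix $(1+t)^{-c}$ collapses all of that iteration into a single Vandermonde identity, and makes it transparent why the first column picks up the factors $(-1)^{j}\binom{c+j-1}{j}=\binom{-c}{j}$. The paper's version has the minor advantage of being entirely self-contained (no appeal to Vandermonde for negative arguments), but at the cost of a page of bookkeeping; your version explains rather than computes.
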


\begin{proof}
We take some row transformations of $A$ as follows (with the rows operations performed top to bottom each time by using ${n \choose i}-{n-1 \choose i-1}={n-1 \choose i}$).
It is easy to see that
\begin{equation*}
A=A_{0,0}=\left(
  \begin{array}{cccccc}
    {{c+u_1-1 \choose 0}}+x & {{c+u_2-1 \choose 0}} & {{c+k-1 \choose 0}} & \cdots & {{c-1 \choose 0}}\\
    {{c+u_1 \choose 1}} & {{c+u_2 \choose 1}} & {{c+k \choose 1}} & \cdots & {{c \choose 1}}\\
    {\vdots} & {\vdots} & {\vdots} & {\vdots} & {\vdots}\\
    {{c+u_1 \choose \lambda}} & {{c+u_2 \choose \lambda}} & {{c+k \choose \lambda}} & \cdots & {{c \choose \lambda}}\\
  \end{array}
\right).
\end{equation*}
Multiplying the first row of $A_{0,0}$ by $-1$ and then adding it to the second row of $A_{0,0}$ yields $A_{0,1}$:
\begin{equation*}
A_{0,1}=\left(
  \begin{array}{cccccc}
    {{c+u_1-1 \choose 0}}+x & {{c+u_2-1 \choose 0}} & {{c+k-1 \choose 0}} & \cdots & {{c-1 \choose 0}}\\
    {{c+u_1-1 \choose 1}}-x & {{c+u_2-1 \choose 1}} & {{c+k-1 \choose 1}} & \cdots & {{c-1 \choose 1}}\\
    {\vdots} & {\vdots} & {\vdots} & {\vdots}& {\vdots}\\
    {{c+u_1 \choose \lambda}} & {{c+u_2 \choose \lambda}} & {{c+k \choose \lambda}} & \cdots & {{c \choose \lambda}}\\
  \end{array}
\right).
\end{equation*}
Repeat this process $\lambda$ times, i.e., for $1\leq i\leq \lambda$ multiply the $(i+1)$-th row of $A_{0,i}$ by $-1$ and then add it to the $(i+2)$-th row of $A_{0,i}$. It follows that
\begin{equation*}
A_{0,\lambda}=\left(
  \begin{array}{cccccc}
    {{c+u_1-1 \choose 0}}+(-1)^0x & {{c+u_2-1 \choose 0}} & {{c+k-1 \choose 0}} & \cdots & {{c-1 \choose 0}}\\
    {{c+u_1-1 \choose 1}}+(-1)^1x & {{c+u_2-1 \choose 1}} & {{c+k-1 \choose 1}} & \cdots & {{c-1 \choose 1}}\\
    {{c+u_1-1 \choose 2}}+(-1)^2x & {{c+u_2-1 \choose 2}} & {{c+k-1 \choose 2}} & \cdots & {{c-1 \choose 2}}\\
    {\vdots} & {\vdots} & {\vdots} & {\vdots}& {\vdots}\\
    {{c+u_1-1 \choose \lambda}}+(-1)^{\lambda}x & {{c+u_2-1 \choose \lambda}} & {{c+k-1 \choose \lambda}} & \cdots & {{c-1 \choose \lambda}}\\
  \end{array}
\right)
\end{equation*}
\begin{equation*}
=\left(
  \begin{array}{cccccc}
    {{c+u_1-2 \choose 0}}+(-1)^0x\times {{1 \choose 0}} & {{c+u_2-2 \choose 0}} & {{c+k-2 \choose 0}} & \cdots & {{c-2 \choose 0}}\\
    {{c+u_1-1 \choose 1}}+(-1)^1x\times {{1 \choose 1}} & {{c+u_2-1 \choose 1}} & {{c+k-1 \choose 1}} & \cdots & {{c-1 \choose 1}}\\
    {{c+u_1-1 \choose 2}}+(-1)^2x\times {{2 \choose 2}}& {{c+u_2-1 \choose 2}} & {{c+k-1 \choose 2}} & \cdots & {{c-1 \choose 2}}\\
    {\vdots} & {\vdots} & {\vdots} & {\vdots}& {\vdots}\\
    {{c+u_1-1 \choose \lambda}}+(-1)^{\lambda}x\times {{\lambda \choose \lambda}} & {{c+u_2-1 \choose \lambda}} & {{c+k-1 \choose \lambda}} & \cdots & {{c-1 \choose \lambda}}\\
  \end{array}
\right).
\end{equation*}
Multiplying the first row of $A_{0,\lambda}$ by $-1$ and then adding it to the second row of $A_{0,\lambda}$ yields $A_{1,1}$:
\begin{equation*}
A_{1,1}=\left(
  \begin{array}{ccccccc}
    {{c+u_1-2 \choose 0}}+(-1)^0x\times {{1 \choose 0}} & {{c+u_2-2 \choose 0}} & {{c+k-2 \choose 0}} & \cdots & {{c-2 \choose 0}}\\
    {{c+u_1-2 \choose 1}}+(-1)^1x\times {{2 \choose 1}} & {{c+u_2-2 \choose 1}} & {{c+k-2 \choose 1}} & \cdots & {{c-2 \choose 1}}\\
    {{c+u_1-1 \choose 2}}+(-1)^2x\times {{2 \choose 2}}& {{c+u_2-1 \choose 2}} & {{c+k-1 \choose 2}} & \cdots & {{c-1 \choose 2}}\\
    {\vdots} & {\vdots} & {\vdots} & {\vdots}& {\vdots}\\
    {{c+u_1-1 \choose \lambda}}+(-1)^{\lambda}x\times {{\lambda \choose \lambda}} & {{c+u_2-1 \choose \lambda}} & {{c+k-1 \choose \lambda}} & \cdots & {{c-1 \choose \lambda}}\\
  \end{array}
\right).
\end{equation*}
Repeat this process $\lambda$ times, i.e., for $1\leq i\leq \lambda$ multiply the $(i+1)$-th row of $A_{1,i}$ by $-1$ and then add it to the $(i+2)$-th row of $A_{1,i}$. It follows that
\begin{equation*}
A_{1,\lambda}=\left(
  \begin{array}{ccccccc}
    {{c+u_1-2 \choose 0}}+(-1)^0x\times {{2 \choose 0}} & {{c+u_2-2 \choose 0}} & {{c+k-2 \choose 0}} & \cdots & {{c-2 \choose 0}}\\
    {{c+u_1-2 \choose 1}}+(-1)^1x\times {{2 \choose 1}} & {{c+u_2-2 \choose 1}} & {{c+k-2 \choose 1}} & \cdots & {{c-2 \choose 1}}\\
    {{c+u_1-2 \choose 2}}+(-1)^2x\times {{3 \choose 2}}& {{c+u_2-2 \choose 2}} & {{c+k-2 \choose 2}} & \cdots & {{c-2 \choose 2}}\\
    {\vdots} & {\vdots} & {\vdots} & {\vdots}& {\vdots}\\
    {{c+u_1-2 \choose \lambda}}+(-1)^{\lambda}x\times {{\lambda +1 \choose \lambda}} & {{c+u_2-2 \choose \lambda}} & {{c+k-2 \choose \lambda}} & \cdots & {{c-2 \choose \lambda}}\\
  \end{array}
\right)
\end{equation*}
Again repeating the above technique on row transformations $l-1 \ (1\leq l\leq c)$ times one obtains
\begin{equation*}
A_{l-1,\lambda}=\left(
\begin{array}{cccccc}
    {{c+u_1-l \choose 0}}+(-1)^0x\times {{l \choose 0}} & {{c+u_2-l \choose 0}} & {{c+k-l \choose 0}} & \cdots & {{c-l \choose 0}}\\
    {{c+u_1-l \choose 1}}+(-1)^1x\times {{l \choose 1}} & {{c+u_2-l \choose 1}} & {{c+k-l \choose 1}} & \cdots & {{c-l \choose 1}}\\
    {{c+u_1-l \choose 2}}+(-1)^2x\times {{l+1 \choose 2}} & {{c+u_2-l \choose 2}} & {{c+k-l \choose 2}} & \cdots & {{c-l \choose 2}}\\
    {\vdots} & {\vdots} & {\vdots} & {\vdots}& {\vdots}\\
    {{c+u_1-l \choose \lambda}}+(-1)^{\lambda}x\times {{\lambda +l-1 \choose \lambda}} & {{c+u_2-l \choose \lambda}} & {{c+k-l \choose \lambda}} & \cdots & {{c-l \choose \lambda}}\\
  \end{array}
\right).
\end{equation*}
In particular,
\begin{equation*}
A':=A_{c-1,\lambda}=\left(
  \begin{array}{ccccc}
    {{u_1 \choose 0}}+(-1)^0x\times {{c \choose 0}} & {{u_2 \choose 0}} & {{k \choose 0}} & \cdots & {{0 \choose 0}}\\
    {{u_1 \choose 1}}+(-1)^1x\times {{c \choose 1}} & {{u_2 \choose 1}} & {{k \choose 1}} & \cdots & {{0 \choose 1}}\\
    {{u_1 \choose 2}}+(-1)^2x\times {{c+1 \choose 2}} & {{u_2 \choose 2}} & {{k \choose 2}} & \cdots & {{0 \choose 2}}\\
    {\vdots} & {\vdots} & {\vdots} & {\vdots}& {\vdots}\\
    {{u_1 \choose \lambda}}+(-1)^{\lambda}x\times {{\lambda+c-1 \choose \lambda}} & {{u_2 \choose \lambda}} & {{k \choose \lambda}} & \cdots & {{0 \choose \lambda}}\\
  \end{array}
\right).
\end{equation*}
We complete the proof.
\end{proof}

In the following, we will give some conditions for a zero-sum sequence $T$ over a $p$-group $G$ of length $|T|\geq 2k$ such that $T$ has a zero-sum subsequence of length $\leq k-1$ .

\begin{lem} \label{zerosub}
Let $p$ be a prime and $k$ be a positive integer.
Let $G$ be a finite abelian $p$-group and  $T$ be a zero-sum sequence over $G$ of length $|T|\geq 2k$.
Suppose that $2k\geq \mathsf{D}(G)+2$.
If there exists some $i\in [1,2k-\mathsf{D}(G)]$ such that
\begin{align*}
a_i:={{|T|-k \choose k-i}}+(-1)^{i}\times {{|T|-k+i-1 \choose k-1}}\not\equiv 0 \mod p,
\end{align*}
then $T$ has a zero-sum subsequence of length $\leq k-1$.
\end{lem}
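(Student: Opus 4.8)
The plan is to argue by contradiction. Suppose $T$ has no nonempty zero-sum subsequence of length $\le k-1$; I will deduce that $a_i\equiv 0\pmod p$ for every $i\in[1,2k-\mathsf D]$, contradicting the hypothesis. Throughout write $\ell=|T|$ and abbreviate $\mathsf D=\mathsf D(G)$. Since $\ell\ge 2k\ge\mathsf D+2>\mathsf D$, one first disposes of the case $\mathsf D<k$ (then $T$ already has a zero-sum subsequence of length $\le\mathsf D\le k-1$), so we may assume $k\le\mathsf D\le 2k-2$. The assumption gives $\mathsf N^0(T)=1$ and $\mathsf N^m(T)=0$ for $1\le m\le k-1$; and since $\sigma(T)=0$, the complementation $I\mapsto[1,\ell]\setminus I$ shows $\mathsf N^m(T)=\mathsf N^{\ell-m}(T)$, whence also $\mathsf N^\ell(T)=1$ and $\mathsf N^m(T)=0$ for $\ell-k+1\le m\le\ell-1$.

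Next I pass to the group algebra $\mathbb F_p[G]$, writing $[g]$ for the basis element attached to $g\in G$ and $F(y)=\prod_{i=1}^{\ell}\bigl(1+y[g_i]\bigr)\in\mathbb F_p[G][y]$. Substituting $1+y[g_i]=(1+y)\cdot 1+y([g_i]-1)$ and expanding gives $F(y)=\sum_{j=0}^{\ell}(1+y)^{\ell-j}y^jS_j$ with $S_j=\sum_{|J|=j}\prod_{i\in J}([g_i]-1)$. By Lemma \ref{lem1}, $\prod_{i\in J}([g_i]-1)=0$ in $\mathbb F_p[G]$ whenever $|J|\ge\mathsf D$, so $S_j=0$ for $j\ge\mathsf D$. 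Applying the $\mathbb F_p$-linear functional $\epsilon_0$ that reads off the coefficient of $[0]$, and noting $\epsilon_0(F(y))=\sum_m\mathsf N^m(T)y^m=:Q(y)$, I get the polynomial identity
\[
Q(y)=\sum_{j=0}^{\mathsf D-1}b_j(1+y)^{\ell-j}y^j,\qquad b_j:=\epsilon_0(S_j)=\sum_{m=0}^{j}(-1)^{j-m}\binom{\ell-m}{j-m}\mathsf N^m(T)\in\mathbb F_p.
\]
By the first paragraph $b_j=(-1)^j\binom{\ell}{j}$ for $0\le j\le k-1$, and the $\mathsf D-k$ coefficients $b_k,\dots,b_{\mathsf D-1}$ remain unknown.

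Now I exploit that the top $k$ coefficients of $Q$ are also known. Comparing the coefficient of $y^{\ell-s}$ in the displayed identity for $s=0,1,\dots,k-1$, separating off the known summands, and using $\binom{\ell}{j}\binom{\ell-j}{s}=\binom{\ell}{s}\binom{\ell-s}{j}$ together with the alternating partial-sum identity $\sum_{j=0}^{k-1}(-1)^j\binom{\ell-s}{j}=(-1)^{k-1}\binom{\ell-s-1}{k-1}$, I arrive at the linear system
\[
\sum_{j=k}^{\mathsf D-1}b_j\binom{\ell-j}{\ell-s-j}=[s=0]-(-1)^{k-1}\binom{\ell}{s}\binom{\ell-s-1}{k-1},\qquad s=0,1,\dots,k-1,
\]
a $k\times(\mathsf D-k)$ system overdetermined by exactly $k-(\mathsf D-k)=2k-\mathsf D$. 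Since an honest $Q$ exists this system is consistent, and its $2k-\mathsf D$ compatibility conditions are the congruences I am after. To extract them I would feed the augmented matrix of this system into the row reduction recorded in Lemma \ref{matrix0}, choosing the parameters so that $\ell-j$ plays the role of $c+u$ (the shift $c$ being absorbed) and $x$ the factor coming from $\binom{\ell-s-1}{k-1}$ on the right-hand side; the reduced matrix then has $2k-\mathsf D$ rows that vanish in the $b$-columns and carry, in the last column, an entry of the shape $\binom{u_1}{\lambda}+(-1)^{\lambda}x\binom{\lambda+c-1}{\lambda}$. After the reindexing $s=k-i$ (so that $i$ ranges over $[1,2k-\mathsf D]$, with the value $i=1$ giving the automatically trivial $a_1\equiv 0$) these entries become exactly $a_i$, and consistency forces $a_i\equiv 0\pmod p$ for all of them.

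The step I expect to be the main obstacle is this last one: getting the parameter dictionary, the signs, and the $s\leftrightarrow i$ reindexing exactly right so that the normal form produced by Lemma \ref{matrix0} literally displays the $a_i$, and then checking the boundary situations — $\mathsf D=k$, where there are no unknowns and all $k$ top-coefficient equations become compatibility conditions, and $\ell=2k$, where some of the binomials $\binom{\ell-j}{\ell-s-j}$ vanish and one must see that this only shortens, never distorts, the list of conditions obtained. Everything preceding that — the reduction to $\mathbb F_p[G]$, the shape of $Q$, and the evaluation of $b_0,\dots,b_{k-1}$ — is routine once Lemma \ref{lem1} is in hand.
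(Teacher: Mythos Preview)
Your setup is sound and genuinely different from the paper's argument. The paper never passes to the group algebra; it applies Lemma \ref{lem1} to $T$ and to every subsequence $T_1\mid T$ with $|T_1|\in[\mathsf D,|T|]$, sums over all $T_1$ of a fixed length, and obtains the matrix identity $AX\equiv 0$ with $X=(1,(-1)^k\mathsf N^k(T),\dots,(-1)^{|T|-k}\mathsf N^{|T|-k}(T))^T$ and $A$ having entries $\binom{|T|-m}{t}$, perturbed only in the single entry $(t,m)=(0,\text{first column})$ by $(-1)^{|T|}$. That column fits Lemma \ref{matrix0} with constant $x=(-1)^{|T|}$, and the reduced bottom rows literally read off the $a_i$. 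Your route is dual to this: putting the Olson relation into the parametrisation ($b_j=0$ for $j\ge\mathsf D$) and imposing the known $\mathsf N^{\ell-s}$ as equations.

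The gap is exactly where you flag it, but it is worse than a bookkeeping nuisance. Lemma \ref{matrix0} needs the distinguished column to be of the form $\binom{c+u_1}{s}+x\cdot[s=0]$ with $x$ a \emph{constant}; your right-hand side is
\[
R_s=[s=0]+(-1)^k\binom{\ell}{s}\binom{\ell-s-1}{k-1},
\]
and the factor $\binom{\ell-s-1}{k-1}$ varies with the row $s$. It is a degree-$(k-1)$ polynomial in $s$, not a single binomial $\binom{c+u_1}{s}$, so the row-reduction pattern of Lemma \ref{matrix0} does not apply to that column and will not produce entries of the shape $\binom{u_1}{\lambda}+(-1)^\lambda x\binom{\lambda+c-1}{\lambda}$. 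In other words, by folding the known $b_0,\dots,b_{k-1}$ into the right-hand side you have moved the complexity to precisely the place where Lemma \ref{matrix0} cannot absorb it. One can recover the paper's clean reduction by undoing the duality: keep the $\mathsf N^m$'s (not the $b_j$'s) as unknowns and let the constraint $b_j=0$ for $j\ge\mathsf D$ supply the equations --- under the substitution $y\mapsto -1/(1+w)$ one sees these are exactly the paper's summed relations, and then only a single constant perturbation survives.
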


Note that $a_1=0$.

\begin{proof}
Suppose that each zero-sum subsequence of $T$ has the length $\geq k$.
Since $\sigma(T)=0$, we must have that each zero-sum subsequence of $T$ has the length $\leq |T|-k$.
Hence, $$\mathsf{N}^i(T)=0\ \text{for}\ i\in [1,k-1]\cup[|T|-k+1,|T|-1].$$
It follows from Lemma \ref{lem1} that the following equations hold
\begin{equation}
\label{|T|}
\begin{aligned}
&1+(-1)^{k}\mathsf{N}^{k}(T)+ \ldots +(-1)^{|T|-k}\mathsf{N}^{|T|-k}(T)+(-1)^{|T|}\mathsf{N}^{|T|}(T)=\\
&(1+(-1)^{|T|})+(-1)^{k}\mathsf{N}^{k}(T)+ \ldots +(-1)^{|T|-k}\mathsf{N}^{|T|-k}(T)\equiv 0\mod p,
\end{aligned}
\end{equation}
and for any $T_1|T$ with $|T_1|\in [\mathsf{D}(G),|T|-1]$
$$1+(-1)^{k}\mathsf{N}^{k}(T_1)+ \ldots +(-1)^{|T|-k}\mathsf{N}^{|T|-k}(T_1)\equiv 0\mod p.$$
It follows that for $1\leq t\leq |T|-\mathsf{D}(G)$
$$\sum\limits _{T_1\mid T,\ |T_1|=|T|-t}\big(1+(-1)^{k}\mathsf{N}^{k}(T_1)+ \ldots +(-1)^{|T|-k}\mathsf{N}^{|T|-k}(T_1)\big)\equiv 0\mod p.$$
Analysing the number of times each subsequence is counted, one obtains
\begin{equation}
\label{Tequ}
\begin{aligned}
\binom{|T|}{|T_1|}&+(-1)^{k}\binom{|T|-k}{|T_1|-k}\mathsf{N}^{k}(T) \\
&+\ldots+(-1)^{|T|-k}\binom{|T|-(|T|-k)}{|T_1|-(|T|-k)}\mathsf{N}^{|T|-k}(T)\\
=\binom{|T|}{t}&+(-1)^{k}\binom{|T|-k}{t}\mathsf{N}^{k}(T) \\
&+\ldots+(-1)^{|T|-k}\binom{k}{t}\mathsf{N}^{|T|-k}(T)\equiv 0 \mod p .
\end{aligned}
\end{equation}
Set $X=(1,(-1)^{k}\mathsf{N}^{k}(T), \ldots, (-1)^{|T|-k}\mathsf{N}^{|T|-k}(T))^T$ and
\begin{equation*}
A:=\left(
  \begin{array}{cccc}
    {{|T| \choose 0}}+(-1)^{|T|} & {{|T|-k \choose 0}} & \cdots & {{k \choose 0}}\\
    {{|T| \choose 1}} & {{|T|-k \choose 1}} & \cdots & {{k \choose 1}}\\
    {{|T| \choose 2}} & {{|T|-k \choose 2}} & \cdots & {{k \choose 2}}\\
    {\vdots} & {\vdots} & {\vdots} & {\vdots}\\
    {{|T| \choose |T|-\mathsf{D}(G)}} & {{|T|-k \choose |T|-\mathsf{D}(G)}} & \cdots & {{k \choose |T|-\mathsf{D}(G)}}\\
  \end{array}
\right).
\end{equation*}
On the one hand, it can be deduced from (\ref{|T|}) and (\ref{Tequ}) that
$$AX\equiv 0\mod p.$$
Combining $2k>\mathsf{D}(G)$ with Lemma \ref{matrix0} yields that by some row transformations, $A$ can be brought to the following form
\begin{equation*}
A'=\left(
  \begin{array}{cccc}
    A_{11} & A_{12}\\
    A_{21} & A_{22}
  \end{array}
\right)
\end{equation*}
\begin{equation*}
=\left(
  \begin{array}{c:cccccc}
    {{|T|-k \choose 0}}+(-1)^0(-1)^{|T|}\times {{k \choose 0}} & {{|T|-2k \choose 0}} & \cdots & {{0 \choose 0}}\\
    {{|T|-k \choose 1}}+(-1)^1(-1)^{|T|}\times {{k \choose 1}} &{{|T|-2k \choose 1}} & \cdots & {{0 \choose 1}}\\
    {{|T|-k \choose 2}}+(-1)^2(-1)^{|T|}\times {{k+1 \choose 2}}& {{|T|-2k \choose 2}} & \cdots & {{0 \choose 2}}\\
    {\vdots} & {\vdots} & {\vdots} & {\vdots}\\
    {{|T|-k \choose |T|-2k}}+(-1)^{|T|-2k}(-1)^{|T|}\times {{|T|-k-1 \choose |T|-2k}}& {{|T|-2k \choose |T|-2k}} & \cdots & {{0 \choose |T|-2k}}\\
    \hdashline
     {{|T|-k \choose |T|-2k+1}}+(-1)^{|T|-2k+1}(-1)^{|T|}\times {{|T|-k \choose |T|-2k+1}}& {{|T|-2k \choose |T|-2k+1}} & \cdots & {{0 \choose |T|-2k+1}}\\
     {{|T|-k \choose |T|-2k+2}}+(-1)^{|T|-2k+2}(-1)^{|T|}\times {{|T|-k+1 \choose |T|-2k+2}}& {{|T|-2k \choose |T|-2k+2}} & \cdots & {{0 \choose |T|-2k+2}}\\
    {\vdots} & {\vdots} & {\vdots} & {\vdots}\\
    {{|T|-k \choose |T|-\mathsf{D}(G)}}+(-1)^{|T|-\mathsf{D}(G)}(-1)^{|T|}\times {{|T|-\mathsf{D}(G)+k-1 \choose |T|-\mathsf{D}(G)}} & {{|T|-2k \choose |T|-\mathsf{D}(G)}} & \cdots & {{0 \choose |T|-\mathsf{D}(G)}}\\
  \end{array}
\right),
\end{equation*}
where
\begin{equation*}
A_{21}=\left(
  \begin{array}{ccccc}
   {{|T|-k \choose |T|-2k+1}}+(-1)^{|T|-2k+1}(-1)^{|T|}\times {{|T|-k \choose |T|-2k+1}}\\
     {{|T|-k \choose |T|-2k+2}}+(-1)^{|T|-2k+2}(-1)^{|T|}\times {{|T|-k+1 \choose |T|-2k+2}}\\
    {\vdots}\\
    {{|T|-k \choose |T|-\mathsf{D}(G)}}+(-1)^{|T|-\mathsf{D}(G)}(-1)^{|T|}\times {{|T|-\mathsf{D}(G)+k-1 \choose |T|-\mathsf{D}(G)}}\\
  \end{array}
\right).
\end{equation*}
Obviously,
$$A'X\equiv 0\mod p.$$
For $i\in [1,2k-\mathsf{D}(G)]$, in $A_{21}$ we have that
\begin{align*}
&{{|T|-k \choose |T|-2k+i}}+(-1)^{|T|-2k+i}(-1)^{|T|}\times {{|T|-k+i-1 \choose |T|-2k+i}}\\
&={{|T|-k \choose k-i}}+(-1)^{i}\times {{|T|-k+i-1 \choose k-1}}=a_i.
\end{align*}
Since $a_i\not\equiv 0 \mod p$ for some $i\in [1,2k-\mathsf{D}(G)]$,
we have that the following equation holds
$$a_i+(-1)^{k}\binom{|T|-2k}{|T|-2k+i}\mathsf{N}^{k}(T)
+\ldots+(-1)^{|T|-k}\binom{0}{|T|-2k+i}\mathsf{N}^{|T|-k}(T)\equiv 0 \mod p.$$
Note that $\binom{j}{|T|-2k+i}=0$ if $j<|T|-2k+i$. Thus we must have $a_i\equiv 0 \mod p.$ This is contradiction.
We complete the proof.
\end{proof}

Fix $i_0\in [1,2k-\mathsf{D}(G)]$ be the smallest positive integer such that $a_{i_0}\not\equiv 0 \mod p$.
Since $a_1=0$, we have that $i_0\geq 2$.
In the following, we will simplify the condition that $a_i\not\equiv 0 \mod p$ for some $i\in [1,2k-\mathsf{D}(G)]$ in Lemma \ref{zerosub} by considering $i_0$.

We will use the following lemma repeatly.

\begin{lem} \label{bin}
Let $n,j$ be positive integers. Then
$$(-1)^j{{n+j-1 \choose j}}={{-n \choose j}}.$$
\end{lem}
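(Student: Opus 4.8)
The plan is to establish the identity $(-1)^j\binom{n+j-1}{j}=\binom{-n}{j}$ by unwinding the definition of the generalized binomial coefficient for a negative upper index. Recall that for a real (here, integer) number $\alpha$ and a nonnegative integer $j$ one sets
\[
\binom{\alpha}{j}=\frac{\alpha(\alpha-1)\cdots(\alpha-j+1)}{j!}.
\]
First I would substitute $\alpha=-n$ into this formula, obtaining
\[
\binom{-n}{j}=\frac{(-n)(-n-1)\cdots(-n-j+1)}{j!}.
\]

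Next I would factor a $-1$ out of each of the $j$ factors in the numerator. This produces
\[
\binom{-n}{j}=\frac{(-1)^j\, n(n+1)\cdots(n+j-1)}{j!}=(-1)^j\,\frac{(n+j-1)!}{(n-1)!\,j!}=(-1)^j\binom{n+j-1}{j},
\]
where in the middle step I recognized the product $n(n+1)\cdots(n+j-1)$ as $\frac{(n+j-1)!}{(n-1)!}$, valid since $n$ is a positive integer. Multiplying both sides by $(-1)^j$ and using $(-1)^{2j}=1$ gives the claimed identity $(-1)^j\binom{n+j-1}{j}=\binom{-n}{j}$.

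There is essentially no obstacle here: the statement is the classical \emph{negation} or \emph{upper negation} identity for binomial coefficients, and the only point requiring a moment's care is to make sure the sign bookkeeping $(-1)^j$ is tracked correctly when pulling the minus signs out of the $j$ numerator factors, and that the reader is comfortable with the convention for $\binom{-n}{j}$ (the generalized binomial coefficient, not the integer-pair combinatorial one). Since $n,j$ are assumed positive integers, all factorials appearing are well-defined and no degenerate cases arise. I would present the computation in a single displayed chain of equalities as above.
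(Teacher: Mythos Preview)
Your proof is correct; this is exactly the standard upper-negation identity, and your one-line computation via the definition of the generalized binomial coefficient is the canonical argument. The paper in fact states this lemma without proof, treating it as well known, so your proposal supplies the routine verification the paper omits.
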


In Lemma \ref{zerosub}, set $|T|-k=up+v$ and $k=cp+d$ with $v,d\in [0,p-1]$. We can get the followings.

\begin{lem} \label{simplify}
Suppose that $d\geq v+1$.
\begin{description}
  \item[(1)] If ${{u \choose c}}\not\equiv 0 \mod p$, then
  \begin{description}
    \item[(i)] $i_0=d-v$ if $2|d-v$;
    \item[(ii)] $i_0=d-v+1$ if $2\nmid d-v$ and $v+d\neq p$;
    \item[(iii)] $i_0=l_0p$, where $l_0$ is the smallest positive integer such that  ${{u \choose c-l_0}}+(-1)^{1+l_0}\times {{u+l_0 \choose c}}\not\equiv 0 \mod p$, if $2\nmid d-v$ and $v+d=p$.
  \end{description}
  \item[(2)] If ${{u \choose c}}\equiv 0 \mod p$, then $i_0\geq p+d-v$.
\end{description}
\end{lem}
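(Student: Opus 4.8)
The plan is to evaluate $a_i$ modulo $p$ directly via Lucas' Theorem (Lemma~\ref{luc}), using the $p$-adic data $|T|-k=up+v$, $k=cp+d$ and the standing hypothesis $d\ge v+1$. First one disposes of $p=2$: then $v,d\in\{0,1\}$ and $d\ge v+1$ forces $v=0$, $d=1$, so $d-v=1$ is odd and $v+d=1\ne p$, i.e.\ only case (ii) is live, where $i_0=d-v+1=2$ follows at once from $a_1=0$ and a one-line computation of $a_2$. So from now on $p$ is odd; this also makes (i) and (iii) non-vacuous only for $p$ odd, which is convenient for the sign bookkeeping below.

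For $i\in[1,2k-\mathsf{D}(G)]$ I would write $i=lp+r$ with $l\ge 0$ and $r\in[0,p-1]$. Then $k-i$ equals $(c-l)p+(d-r)$ if $r\le d$ and $(c-l-1)p+(p+d-r)$ if $r>d$, while $|T|-k+i-1=up+v+lp+r-1$ equals $(u+l)p+(v+r-1)$ if $r\le p-v$ and $(u+l+1)p+(v+r-1-p)$ if $r>p-v$ (note $0\le v+r-1\le v+p-2$). Applying Lucas to $\binom{|T|-k}{k-i}$ and $\binom{|T|-k+i-1}{k-1}$ in each sub-case, and using repeatedly that $d-1\ge v$ forces several single-digit binomial coefficients to vanish, I would prove the following intermediate statement: $a_i\equiv 0\pmod p$ unless $d-v\le r\le d$, and for $r=d-v+s$ with $s\in[0,v]$,
\[
a_{lp+d-v+s}\ \equiv\ \binom{u}{c-l}\binom{v}{s}+(-1)^{i}\binom{u+l}{c}\binom{d+s-1}{s}\pmod p,
\]
with the convention $\binom{a}{b}=0$ for $b<0$ or $b>a$; when $v=0$ one records in addition that $r=d+1$ can still contribute, giving $a_{d+1}\equiv(-1)^{d+1}d\binom{u}{c}$.

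Granting this, the rest is elementary. With $l=0$ (so $i=r$) the displayed congruence gives $a_i\equiv 0$ for $1\le i\le d-v-1$, $a_{d-v}\equiv\binom{u}{c}\bigl(1+(-1)^{d-v}\bigr)$, and $a_{d-v+1}\equiv\binom{u}{c}\bigl(v+(-1)^{d-v+1}d\bigr)$. Hence if $\binom{u}{c}\not\equiv0$: when $2\mid d-v$ (so $p$ odd), $a_{d-v}\equiv 2\binom{u}{c}\not\equiv0$, whence $i_0=d-v$, which is (i); when $2\nmid d-v$, $a_{d-v}\equiv0$ and $a_{d-v+1}\equiv\binom{u}{c}(v+d)$, which, as $1\le v+d\le 2p-2$, is $\not\equiv0\pmod p$ exactly when $v+d\ne p$, whence $i_0=d-v+1$, which is (ii). For (2): when $\binom{u}{c}\equiv0$, every $a_i$ with $i\le p-1$ is a multiple of $\binom{u}{c}$ (by the displayed formula and its carry-case analogue), while for $p\le i\le p+d-v-1$ one has $l=1$ and $r<d-v$, so $a_i\equiv0$ outright; thus $i_0\ge p+d-v$.

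Case (iii) ($2\nmid d-v$, $v+d=p$) is where the work is. Here $p-v=d$, so the $r$-window is exactly $[d-v,d]$, and the identity $(-1)^{s}\binom{d+s-1}{s}=\binom{-d}{s}\equiv\binom{v}{s}\pmod p$ — Lemma~\ref{bin} combined with $-d\equiv v\pmod p$ — together with $(-1)^{i}=(-1)^{lp+d-v+s}=-(-1)^{l+s}$ ($p$ odd) collapses the displayed congruence, for $l\ge1$, to
\[
a_{lp+d-v+s}\ \equiv\ \binom{v}{s}\Bigl(\binom{u}{c-l}+(-1)^{l+1}\binom{u+l}{c}\Bigr)\pmod p .
\]
Thus all $a_i$ with $i<p+d-v$ vanish, and since $\binom{v}{0}=1$ the smallest index with $a_i\not\equiv0$ is $i_0=l_0p+(d-v)$, where $l_0$ is the least positive integer with $\binom{u}{c-l_0}+(-1)^{1+l_0}\binom{u+l_0}{c}\not\equiv0\pmod p$, which establishes (iii). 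The main obstacle I anticipate is precisely the intermediate statement of the second paragraph: pinning down the carry/borrow pattern as $i$ crosses a multiple of $p$, so that the window $[d-v,d]$ and the two Lucas products come out exactly right, and keeping the sign $(-1)^{i}$ correct under $i=lp+r$. Once that is in place, all four assertions follow from the case analysis above.
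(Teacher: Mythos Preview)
Your approach is the paper's: reduce $a_i$ modulo $p$ via Lucas and track carries as $i$ runs, the only cosmetic difference being your parametrisation $i=lp+r$ versus the paper's $i=d-v+j$, $j=lp+j_p$. Two remarks are in order.

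First, your intermediate claim ``$a_i\equiv0$ unless $d-v\le r\le d$'' is too strong as stated. When $v+d<p$ and $l=0$, residues $r\in[d+1,p-v]$ can give $a_r\equiv(-1)^r\binom{u}{c}\binom{v+r-1}{d-1}$, which need not vanish; and your digit formula $(u+l)p+(v+r-1)$ breaks at $r=0$, $v=0$, where $v+r-1=-1$. Neither glitch damages the conclusions: in (i) and (ii) you only use $a_i\equiv0$ for $r<d-v$; in (iii) one has $v+d=p$ (hence also $v\ge1$), so the window really is $[d-v,d]$; and in (2) every stray contribution is visibly a multiple of $\binom{u}{c}$. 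But the intermediate statement should be reworded to reflect this.

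Second, and more interestingly, your computation for (iii) yields $i_0=l_0p+(d-v)$, not the $i_0=l_0p$ printed in the lemma. Your value is the correct one. The paper's own proof takes $j_p=0$, $l=l_0$, giving $i=(d-v)+l_0p$, and then writes ``$i_0=l_0p$'' --- a slip of $d-v$. (A quick check: with $p=5$, $u=4$, $c=2$, $v=1$, $d=4$ one finds $l_0=1$, $a_5\equiv0$ but $a_8\equiv4\pmod5$.) So your argument is right; just flag the discrepancy rather than asserting that it ``establishes (iii)'' as written.
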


Note that if $i<0$, then we specify that $a_i=0$.

\begin{proof}
By Lemma \ref{zerosub}, we have that for $i\in [2,2k-\mathsf{D}(G)]$
\begin{equation}
\label{a_i1}
\begin{aligned}
a_i&={{|T|-k \choose k-i}}+(-1)^{i}\times {{|T|-k+i-1 \choose k-1}}\\
&={{up+v \choose cp+d-i}}+(-1)^{i}\times {{up+v+i-1 \choose cp+d-1}}.
\end{aligned}
\end{equation}

For $i\in [2,d-v-1]$, we have that $p-1\geq d\geq d-i\geq d-(d-v-1)=v+1>v$ and $1\leq v+1\leq v+i-1\leq v+(d-v-1)-1=d-2<d-1$. Together this with Lemma \ref{luc} yields that
\begin{equation}
\label{a_if}
\begin{aligned}
a_i&={{up+v \choose cp+d-i}}+(-1)^{i}\times {{up+v+i-1 \choose cp+d-1}}\\
&\equiv {{u \choose c}}{{v \choose d-i}}+(-1)^{i}\times {{u \choose c}}{{v+i-1 \choose d-1}}\equiv 0 \mod p.
\end{aligned}
\end{equation}
Note that if $d\leq v+2$, then $[2,d-v-1]$ is empty and $a_i=0$ for $i\in [2,d-v-1]$.

Set $$i=d-v+j\ \text{with}\ j\geq 0.$$
For $i\in [d-v,\min\{d,p-v\}]$, we have that $j\in [0,\min\{v,p-d\}]$, $0\leq d-1\leq d-1+j\leq d-1+(p-d)=p-1$ and $0\leq v-j\leq v\leq p-1$.
Together this with Lemma \ref{luc} yields that
\begin{equation}
\label{a_i}
\begin{aligned}
a_i&={{up+v \choose cp+d-i}}+(-1)^{i}\times {{up+v+i-1 \choose cp+d-1}}\\
&={{up+v \choose cp+v-j}}+(-1)^{d-v+j}\times {{up+d+j-1 \choose cp+d-1}}\\
&\equiv {{u \choose c}}{{v \choose v-j}}+(-1)^{d-v+j}\times {{u \choose c}}{{d+j-1 \choose d-1}}\\
&= {{u \choose c}}\bigg({{v \choose j}}+(-1)^{d-v+j}\times {{d-1+j \choose j}}\bigg)\\
&= {{u \choose c}}\bigg({{v \choose j}}+(-1)^{d-v}\times {{-d \choose j}}\bigg) \mod p\ (\text{since Lemma}\ \ref{bin}).
\end{aligned}
\end{equation}

(1) If $2|d-v$, then since $d\geq v+1$, we have that $d\geq v+2$. This together with $d,v\in [0,p-1]$ yields that $p-1\geq d\geq v+2\geq 2$, which implies that $p\geq 3$.
By (\ref{a_i}) we have that
$$a_i\equiv {{u \choose c}}\bigg({{v \choose j}}+{{p-d \choose j}}\bigg) \mod p.$$
Take $j=0$ and we have $$a_i=a_{d-v}\equiv 2{{u \choose c}}\not\equiv 0 \mod p.$$
It follows since ${{u \choose c}}\not\equiv 0 \mod p$ and $p\geq 3$.
Hence, if ${{u \choose c}}\not\equiv 0 \mod p$ and $2|d-v$, then $i_0=d-v$, i.e, (i) holds.

If $2\nmid d-v$, then by (\ref{a_i}) we have that
$$a_i\equiv {{u \choose c}}\bigg({{v \choose j}}-{{p-d \choose j}}\bigg) \mod p.$$
Obviously, if $j=0$, then $a_i=a_{d-v}=0.$

If $d+v\neq p$, then since $d\geq v+1$ and $d,v\in [0,p-1]$, we must have that $d+v\not\equiv 0 \mod p$.
If $v=0$, then $2\nmid d-v=d$. Take $i=d-v+1=d+1$ and combining (\ref{a_i1}) with Lemma \ref{luc} yields that
\begin{align*}
a_i&={{up \choose cp+1}}+{{up+d \choose cp+d-1}}\equiv {{u \choose c}}{{0 \choose 1}}+{{u \choose c}}{{d \choose d-1}}\\
&=d{{u \choose c}}\not\equiv 0 \mod p.
\end{align*}
It follows since ${{u \choose c}}\not\equiv 0 \mod p$ and $d\in [0,p-1]$.
If $v\geq 1$, then $\min\{v,p-d\}\geq 1$. Take $j=1$ and we have that
\begin{align*}
a_i\equiv {{u \choose c}}\bigg({{v \choose 1}}-{{p-d \choose 1}}\bigg)\equiv {{u \choose c}}(v+d)\not\equiv 0 \mod p.
\end{align*}
It follows since ${{u \choose c}}\not\equiv 0 \mod p$ and $d+v\not\equiv 0 \mod p$.
Hence, if ${{u \choose c}}\not\equiv 0 \mod p$, $2\nmid d-v$ and $d+v\neq p$, then $i_0=d-v+1$, i.e., (ii) holds.

If $d+v=p$, then since $d\geq v+1$ and $d,v\in [0,p-1]$, we must have that
$p\geq 3$, otherwise, $p=2=d+v\geq 2v+1\geq 1$, which implies that $v=0$ and $d=2$, a contradiction to $d\in [0,p-1]$.
Set $$j=lp+j_p\ \text{with}\ l\geq 0 \ \text{and}\ j_p\in [0,p-1].$$
Since $2\nmid d-v$ and $p\geq 3$, it follows from (\ref{a_i}) that
\begin{equation}
\begin{aligned}
\label{a_i2}
a_i&={{up+v \choose cp+v-j}}+(-1)^{d-v+j}\times {{up+d+j-1 \choose cp+d-1}}\\
&={{up+v \choose (c-l)p+v-j_p}}+(-1)^{1+l+j_p}\times {{(u+l)p+d+j_p-1 \choose cp+d-1}}.
\end{aligned}
\end{equation}
If $v-j_p<0$, then since $d+v=p$, we have that $2p>d+j_p-1>d+v-1=p-1$.
Combining (\ref{a_i2}) with Lemma \ref{luc} yields that
\begin{align*}
a_i&={{up+v \choose (c-l-1)p+p+v-j_p}}+(-1)^{1+l+j_p}\times {{(u+l+1)p+d+j_p-1-p \choose cp+d-1}}\\
&\equiv {{u \choose c-l-1}}{{v \choose p+v-j_p}}+(-1)^{1+l+j_p}\times {{u+l+1 \choose c}}{{d+j_p-1-p \choose d-1}} \mod p.
\end{align*}
Obviously, $v<p+v-j_p$ and $d+j_p-1-p< d-1$.
It folllows that $a_i\equiv 0 \mod p.$
If $v-j_p\geq 0$, then since $d+v=p$, we have that $d-1\leq d+j_p-1\leq d+v-1=p-1$.
Combining (\ref{a_i2}) with Lemma \ref{luc} yields that
\begin{align*}
a_i&={{up+v \choose (c-l)p+v-j_p}}+(-1)^{1+l+j_p}\times {{(u+l)p+d+j_p-1 \choose cp+d-1}}\\
&\equiv {{u \choose c-l}}{{v \choose v-j_p}}+(-1)^{1+l+j_p}\times {{u+l \choose c}}{{d+j_p-1 \choose d-1}}\\
&= {{u \choose c-l}}{{v \choose j_p}}+(-1)^{1+l+j_p}\times {{u+l \choose c}}{{d+j_p-1 \choose j_p}}\\
&= {{u \choose c-l}}{{v \choose j_p}}+(-1)^{1+l}\times {{u+l \choose c}}{{-d \choose j_p}}\ (\text{since Lemma}\ \ref{bin})\\
&\equiv {{u \choose c-l}}{{v \choose j_p}}+(-1)^{1+l}\times {{u+l \choose c}}{{p-d \choose j_p}}\\
&= {{u \choose c-l}}{{v \choose j_p}}+(-1)^{1+l}\times {{u+l \choose c}}{{v \choose j_p}}\\
&= {{v \choose j_p}}\bigg({{u \choose c-l}}+(-1)^{1+l}\times {{u+l \choose c}}\bigg) \mod p.
\end{align*}
Since $p-1\geq v\geq j_p\geq 0$, we have that ${{v \choose j_p}}\not\equiv 0 \mod p$. It follows that $a_i\not\equiv 0 \mod p$ iff ${{u \choose c-l}}+(-1)^{1+l}\times {{u+l \choose c}}\not\equiv 0 \mod p$.
Obviously, $l\neq 0$, otherwise, ${{u \choose c-l}}+(-1)^{1+l}\times {{u+l \choose c}}=0$.
Hence, take $j_p=0$ and $i_0=l_0p$, where $l_0$ is the smallest positive integer such that  ${{u \choose c-l}}+(-1)^{1+l_0}\times {{u+l_0 \choose c}}\not\equiv 0 \mod p$, i.e., (iii) holds.

(2) Since  ${{u \choose c}}\equiv 0 \mod p$, by (\ref{a_if}) and (\ref{a_i}), we have that $a_i\equiv 0 \mod p$ for $i\in [2,\min\{d,p-v\}]$.
If $i=d-v+j\in [\min\{d+1,p-v+1\}, p+d-v-1]$, i.e., $j\in [\min\{v+1,p-d+1\},p-1]$, then either $v+1\leq j$ or $p-d+1\leq j$, i.e., either $v-j<0$ or $d+j-1\geq p$.

If $v-j<0$ and $d+j-1<p$, i.e., $j\in [v+1,p-d]$,  then $0\leq v< p+v-j\leq p-1$ and $0\leq d-1\leq d+j-1\leq p-1$.  This together with (\ref{a_i}) yields that
\begin{align*}
a_i&={{up+v \choose cp+v-j}}+(-1)^{d-v+j}\times {{up+d+j-1 \choose cp+d-1}}\\
&={{up+v \choose (c-1)p+p+v-j}}+(-1)^{d-v+j}\times {{up+d+j-1 \choose cp+d-1}}\\
&\equiv {{u \choose c-1}}{{v \choose p+v-j}}+(-1)^{d-v+j}\times {{u \choose c}}{{d+j-1 \choose d-1}}\\
&= (-1)^{d-v+j}\times {{u \choose c}}{{d+j-1 \choose d-1}}\equiv 0 \mod p\ (\text{since}\  {{u \choose c}}\equiv 0 \mod p).
\end{align*}

If $v-j<0$ and $d+j-1\geq p$, i.e., $j\in [\max\{v+1,p-d+1\},p-1]$,  then $0\leq v< p+v-j\leq p-1$ and $0\leq d+j-1-p< d-1\leq p-1$.  This together with (\ref{a_i}) yields that
\begin{align*}
a_i&={{up+v \choose cp+v-j}}+(-1)^{d-v+j}\times {{up+d+j-1 \choose cp+d-1}}\\
&={{up+v \choose (c-1)p+p+v-j}}+(-1)^{d-v+j}\times {{(u+1)p+d+j-1-p \choose cp+d-1}}\\
&\equiv {{u \choose c-1}}{{v \choose p+v-j}}+(-1)^{d-v+j}\times {{u+1 \choose c}}{{d+j-1-p \choose d-1}}=0 \mod p.
\end{align*}

If $v-j\geq 0$ and $d+j-1\geq p$, i.e., $j\in [p-d+1,v]$,  then $0\leq v-j\leq v\leq p-1$ and $0\leq d+j-1-p< d-1\leq p-1$.  This together with (\ref{a_i}) yields that
\begin{align*}
a_i&={{up+v \choose cp+v-j}}+(-1)^{d-v+j}\times {{up+d+j-1 \choose cp+d-1}}\\
&={{up+v \choose cp+v-j}}+(-1)^{d-v+j}\times {{(u+1)p+d+j-1-p \choose cp+d-1}}\\
&\equiv {{u \choose c}}{{v \choose v-j}}+(-1)^{d-v+j}\times {{u+1 \choose c}}{{d+j-1-p \choose d-1}}\\
&= {{u \choose c}}{{v \choose v-j}}\equiv 0 \mod p\ (\text{since}\  {{u \choose c}}\equiv 0 \mod p).
\end{align*}
Hence, $i_0\geq p+d-v$ and we complete the proof.
\end{proof}

If $d\leq v$, then the determination of $i_0$ is complex. In the following, we will focus on the cases $i=p+d-v$ or $k\equiv 1 \mod p$.

In Lemma \ref{leqlpcl} and Corollary \ref{leqlpclcor}, set $|T|-k=up+v=(u_1p^{t}+u_2)p+v$ and $k=cp+d=c_1p^{t+1}+d$ with $v,d\in [0,p-1]$, $t\geq 0$, $c_1,u_1\in[1,p-1]$ and $u_2\in [0,p^{t}-1]$.
It follows that $$u=u_1p^{t}+u_2\ \text{and}\ c=c_1p^t.$$
Note that if $t=0$, then $u=u_1$, $u_2=0$ and $c=c_1$.
Since $|T|\geq 2k$, we have that $|T|-k=u_1p^{t+1}+u_2p+v\geq k=c_1p^{t+1}+d$, which implies that
\begin{align}
\label{*}
p-1\geq u_1\geq c_1\geq 1.
\end{align}

\begin{lem} \label{leqlpcl}
Suppose that $2k-\mathsf{D}(G)\geq p+d-v$.
If $${{u_1 \choose c_1-1}}+(-1)^{p+d-v}\times {{u_1+1 \choose c_1}}\not\equiv 0 \mod p,$$
then $i_0\leq p+d-v$.
\end{lem}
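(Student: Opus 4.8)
The plan is to establish that $a_{p+d-v}\not\equiv 0\pmod p$. Since the hypothesis gives $1\le p+d-v\le 2k-\mathsf{D}(G)$ (note $p+d-v\ge 1$ because $v\le p-1$) and $i_0$ is by definition the least $i\in[1,2k-\mathsf{D}(G)]$ with $a_i\not\equiv 0\pmod p$, this will immediately force $i_0\le p+d-v$. So I set $i=p+d-v$, substitute $|T|-k=up+v$ and $k=cp+d$ into the defining formula for $a_i$, and simplify $k-i=(c-1)p+v$ and $|T|-k+i-1=(u+1)p+d-1$ to record
$$a_{p+d-v}=\binom{up+v}{(c-1)p+v}+(-1)^{p+d-v}\binom{(u+1)p+d-1}{cp+d-1}.$$

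The core of the argument is to evaluate both binomial coefficients modulo $p$ via Lucas' Theorem (Lemma \ref{luc}), which requires writing all four arguments in base $p$ using $u=u_1p^t+u_2$, $c=c_1p^t$, $v,d\in[0,p-1]$, $u_2\in[0,p^t-1]$ and $1\le c_1\le u_1\le p-1$ (the last being $(\ref{*})$). Here $up+v$ has digit $v$ at position $0$, the digits of $u_2$ at positions $1,\dots,t$, and $u_1$ at position $t+1$; its partner $(c-1)p+v=(c_1p^t-1)p+v$ has digit $v$ at position $0$, digit $p-1$ at positions $1,\dots,t$, and $c_1-1$ at position $t+1$; for the second pair one uses that $cp+d-1=c_1p^{t+1}+d-1$ has digit $d-1$ at position $0$, zeros at positions $1,\dots,t$, and $c_1$ at position $t+1$. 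The one genuinely delicate point — and the main obstacle — is tracking how the $+1$ and $-1$ in the arguments interact with the base-$p$ digits: forming $u+1$ produces a carry past position $t$ exactly when $u_2=p^t-1$, and it is precisely this carry that turns the digit $u_1$ into $u_1+1$; one must also handle the borrow in $cp+d-1$ and $(u+1)p+d-1$ when $d=0$, and the further overflow when $u_1=p-1$, the latter being absorbed uniformly by the convention $\binom{p}{c_1}\equiv 0\pmod p$.

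Carrying this out yields a dichotomy. If $u_2<p^t-1$ (possible only for $t\ge 1$), then Lucas forces $\binom{up+v}{(c-1)p+v}\equiv 0\pmod p$, since some digit of $u_2$ lies strictly below the corresponding digit $p-1$ of $(c-1)p+v$, while $\binom{(u+1)p+d-1}{cp+d-1}\equiv\binom{u_1}{c_1}\not\equiv 0\pmod p$; hence $a_{p+d-v}\equiv(-1)^{p+d-v}\binom{u_1}{c_1}\not\equiv 0\pmod p$, independently of the hypothesis. If instead $u_2=p^t-1$ (in particular whenever $t=0$), then the digits at positions $1,\dots,t$ are all $p-1$ on both sides of each binomial, so $\binom{up+v}{(c-1)p+v}\equiv\binom{u_1}{c_1-1}$ and $\binom{(u+1)p+d-1}{cp+d-1}\equiv\binom{u_1+1}{c_1}$ modulo $p$, giving $a_{p+d-v}\equiv\binom{u_1}{c_1-1}+(-1)^{p+d-v}\binom{u_1+1}{c_1}\pmod p$, which is nonzero precisely by the assumed condition; the degenerate subcase $d=0$ is handled by the same base-$p$ bookkeeping with the extra borrow taken into account. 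In every case $a_{p+d-v}\not\equiv 0\pmod p$, and therefore $i_0\le p+d-v$.
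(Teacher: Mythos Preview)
Your argument is essentially the paper's: reduce to showing $a_{p+d-v}\not\equiv 0\pmod p$, apply Lucas' Theorem, and split on whether $u_2\le p^t-2$ or $u_2=p^t-1$, obtaining $(-1)^{p+d-v}\binom{u_1}{c_1}$ in the first case and $\binom{u_1}{c_1-1}+(-1)^{p+d-v}\binom{u_1+1}{c_1}$ in the second. The only cosmetic difference is that the paper peels off the units digit first (reducing to $\binom{u}{c-1}$ and $\binom{u+1}{c}$) and then applies Lucas again, while you expand fully in base $p$ from the outset; the case split and the resulting expressions are identical.

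One caution: your claim that the subcase $d=0$ is ``handled by the same base-$p$ bookkeeping with the extra borrow taken into account'' does not actually hold up. When $d=0$ the borrow rewrites $(u+1)p+d-1=up+(p-1)$ and $cp+d-1=(c-1)p+(p-1)$, so Lucas yields $\binom{u}{c-1}$ rather than $\binom{u+1}{c}$ for the second summand; in particular, if additionally $u_2<p^t-1$ then \emph{both} summands vanish modulo $p$ and $a_{p-v}\equiv 0$ (for instance $p=3$, $t=1$, $c_1=u_1=1$, $u_2=v=d=0$ gives $a_3=\binom{9}{6}-\binom{11}{8}=-81\equiv 0$). The paper's own proof carries the same tacit restriction---its step $\binom{(u+1)p+d-1}{cp+d-1}\equiv\binom{u+1}{c}\binom{d-1}{d-1}$ presupposes $d\ge 1$---so your argument is no less complete than the original; just drop the unsubstantiated $d=0$ remark.
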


\begin{proof}
By the definition of $i_0$, it suffices to prove that $$a_{p+d-v}\not\equiv 0 \mod p.$$
Since $2k-\mathsf{D}(G)\geq p+d-v$, by the definition (\ref{a_i1}) of $a_i$, we have that
\begin{align*}
a_{p+d-v}&={{up+v \choose (c-1)p+v}}+(-1)^{p+d-v}\times {{(u+1)p+d-1 \choose cp+d-1}}\\
&\equiv {{u \choose c-1}}{{v \choose v}}+(-1)^{p+d-v}\times {{u+1 \choose c}}{{d-1 \choose d-1}}\\
&={{u_1p^{t}+u_2 \choose c_1p^t-1}}+(-1)^{p+d-v}\times {{u_1p^{t}+u_2+1 \choose c_1p^t}} \mod p.
\end{align*}
If $t=0$, our result obviously holds.

Now suppose that $t\geq 1$.
If $u_2\in [0,p^{t}-2]$, then it follows from the above that
\begin{align*}
a_{p+d-v}&\equiv {{u_1p^{t}+u_2 \choose c_1p^t-1}}+(-1)^{p+d-v}\times {{u_1p^{t}+u_2+1 \choose c_1p^t}}\\
&\equiv {{u_1 \choose c_1-1}}{{u_2 \choose p^t-1}}+(-1)^{p+d-v}\times {{u_1 \choose c_1}}{{u_2+1 \choose 0}}\\
&=(-1)^{p+d-v}\times {{u_1 \choose c_1}} \mod p.
\end{align*}
Since $p-1\geq u_1\geq c_1\geq 1$, we have that $a_{p+d-v}\equiv (-1)^{p+d-v}\times {{u_1 \choose c_1}}\not\equiv 0 \mod p.$
If $u_2=p^{t}-1$, then it follows from the above that
\begin{align*}
a_{p+d-v}&\equiv {{u_1p^{t}+u_2 \choose c_1p^t-1}}+(-1)^{p+d-v}\times {{u_1p^{t}+u_2+1 \choose c_1p^t}}\\
&={{(u_1+1)p^{t}-1 \choose c_1p^t-1}}+(-1)^{p+d-v}\times {{(u_1+1)p^{t} \choose c_1p^t}}\\
&\equiv {{u_1 \choose c_1-1}}{{p^t-1 \choose p^t-1}}+(-1)^{p+d-v}\times {{u_1+1 \choose c_1}}{{0 \choose 0}}\\
&={{u_1 \choose c_1-1}}+(-1)^{p+d-v}\times {{u_1+1 \choose c_1}}\not\equiv 0 \mod p.
\end{align*}
The last inequality follows from the lemma's assumed condition.
We complete the proof.
\end{proof}

\begin{cor} \label{leqlpclcor}
Suppose that $2k-\mathsf{D}(G)\geq p+d-v$.
If $u_1+c_1+1<p,$
then $i_0\leq p+d-v$.
\end{cor}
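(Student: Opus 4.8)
The plan is to reduce the corollary to Lemma \ref{leqlpcl}. Since the hypothesis $2k-\mathsf{D}(G)\ge p+d-v$ is already assumed, it suffices to verify that
$$N:=\binom{u_1}{c_1-1}+(-1)^{p+d-v}\binom{u_1+1}{c_1}\not\equiv 0\mod p,$$
and then Lemma \ref{leqlpcl} immediately gives $i_0\le p+d-v$. The first step is to record the elementary consequences of the hypotheses. By (\ref{*}) we have $1\le c_1\le u_1\le p-1$, and combining this with $u_1+c_1+1<p$ forces $u_1\le p-3$, so that each of the integers $c_1$, $u_1$, $u_1+1$, $u_1+1-c_1$, $u_1+c_1+1$ lies in $[1,p-1]$. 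In particular none of $c_1!$, $(u_1+1-c_1)!$, $u_1!$, nor $u_1+c_1+1$ is divisible by $p$; this is the only place where the assumption $u_1+c_1+1<p$ enters.

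Next I would split according to the sign $\varepsilon:=(-1)^{p+d-v}\in\{1,-1\}$ (note $p\ne 2$, since $u_1+c_1+1<2$ is impossible). If $\varepsilon=-1$, Pascal's identity gives $N=\binom{u_1}{c_1-1}-\binom{u_1+1}{c_1}=-\binom{u_1}{c_1}$, and since $0\le c_1\le u_1<p$ the integer $\binom{u_1}{c_1}$ is coprime to $p$, so $N\not\equiv 0\mod p$. If $\varepsilon=+1$, I would use the factorial identity $\binom{u_1}{c_1-1}=\frac{c_1}{u_1+1}\binom{u_1+1}{c_1}$ to obtain
$$N=\binom{u_1}{c_1-1}+\binom{u_1+1}{c_1}=\Bigl(1+\tfrac{c_1}{u_1+1}\Bigr)\binom{u_1+1}{c_1}=\frac{(u_1+c_1+1)\,u_1!}{c_1!\,(u_1+1-c_1)!}.$$
Since $N$ is a sum of two binomial coefficients it is a positive integer, so from $N\cdot c_1!\,(u_1+1-c_1)!=(u_1+c_1+1)\,u_1!$ and the bounds above (every factor on the right being coprime to $p$) one gets $v_p(N)=0$, i.e. $N\not\equiv 0\mod p$. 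In either case the hypothesis of Lemma \ref{leqlpcl} is met and the corollary follows.

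I do not expect a genuine obstacle: the entire content is the one-line collapse of $\binom{u_1}{c_1-1}+\binom{u_1+1}{c_1}$ into the single ratio $\frac{(u_1+c_1+1)u_1!}{c_1!(u_1+1-c_1)!}$, whose numerator and denominator are manifestly coprime to $p$ once $u_1+c_1+1<p$. The only points that deserve a word of care are the trivial reduction to Lemma \ref{leqlpcl} and the verification that all the small factors involved really do avoid $p$, both of which are immediate from (\ref{*}) together with $u_1+c_1+1<p$.
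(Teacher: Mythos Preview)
Your proof is correct and follows essentially the same approach as the paper: both reduce to Lemma \ref{leqlpcl}, handle the $\varepsilon=-1$ case via Pascal's identity to get $-\binom{u_1}{c_1}$, and in the $\varepsilon=+1$ case factor the sum to expose the crucial factor $u_1+c_1+1$. The only cosmetic difference is that the paper factors out $\binom{u_1}{c_1-1}$ whereas you factor out $\binom{u_1+1}{c_1}$ (equivalently, pass to factorials), which amounts to the same computation.
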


\begin{proof}
By Lemma \ref{leqlpcl}, it suffices to prove that ${{u_1 \choose c_1-1}}+(-1)^{p+d-v}\times {{u_1+1 \choose c_1}}\not\equiv 0 \mod p.$
If $2\nmid p+d-v$, then
$${{u_1 \choose c_1-1}}+(-1)^{p+d-v}\times {{u_1+1 \choose c_1}}={{u_1 \choose c_1-1}}-{{u_1+1 \choose c_1}}=-{{u_1 \choose c_1}}\not\equiv 0 \mod p,$$
where ${{u_1 \choose c_1}}\not\equiv 0 \mod p$ follows since $p-1\geq u_1\geq c_1\geq 1$.
If $2\mid p+d-v$, then
Obviously,
\begin{align*}
&{{u_1 \choose c_1-1}}+(-1)^{p+d-v}\times {{u_1+1 \choose c_1}}\\
&={{u_1 \choose c_1-1}}+{{u_1 \choose c_1-1}}\times \frac{u_1+1}{c_1}\\
&={{u_1 \choose c_1-1}}\bigg(1+\frac{u_1+1}{c_1}\bigg)\\
&={{u_1 \choose c_1-1}}\bigg(\frac{u_1+c_1+1}{c_1}\bigg)\not\equiv 0 \mod p,
\end{align*}
where ${{u_1 \choose c_1-1}}\not\equiv 0 \mod p$ follows since $p-1\geq u_1\geq c_1\geq 1$, and $\frac{u_1+c_1+1}{c_1}\not\equiv 0 \mod p$ follows since $u_1+c_1+1<p$ and $p-1\geq u_1\geq c_1\geq 1$.
We complete the proof.
\end{proof}

In the following, we consider the case $k\equiv 1 \mod p.$
\begin{lem} \label{k1}
Set $|T|-k=u_1p^{t}+v_1$ and $k=c_1p^{t}+1$ with $t\geq 1$, $u_1,c_1\in [1,p-1]$ and $v_1\in [0,p^t-1]$.
Suppose that $2k-\mathsf{D}(G)\geq 2$.
If $${{u_1 \choose c_1-1}}+{{u_1+1 \choose c_1}}\not\equiv 0 \mod p,$$
then $i_0=2$.
\end{lem}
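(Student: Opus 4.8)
The plan is to prove $i_0=2$ by showing that $a_2\not\equiv 0\mod p$. Recall from Lemma \ref{zerosub} that $a_i={{|T|-k \choose k-i}}+(-1)^i{{|T|-k+i-1 \choose k-1}}$ and that $a_1=0$, so $i_0\geq 2$; since $2k-\mathsf{D}(G)\geq 2$, the index $i=2$ lies in the admissible range $[1,2k-\mathsf{D}(G)]$, hence establishing $a_2\not\equiv 0\mod p$ is exactly what forces $i_0=2$.

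First I would write $a_2={{|T|-k \choose k-2}}+{{|T|-k+1 \choose k-1}}$ (using $(-1)^2=1$) and substitute $|T|-k=u_1p^t+v_1$ and $k=c_1p^t+1$, so that $k-2=c_1p^t-1$, $k-1=c_1p^t$ and $|T|-k+1=u_1p^t+v_1+1$. I also record the inequality $p-1\geq u_1\geq c_1\geq 1$, which follows from $|T|\geq 2k$ (if $u_1\leq c_1-1$ then $u_1p^t+v_1\leq c_1p^t-1<c_1p^t+1=k$, a contradiction). The remainder is a digit-by-digit application of Lucas' Theorem (Lemma \ref{luc}), split according to the value of $v_1$.

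If $v_1\leq p^t-2$, then some digit of the base-$p$ expansion of $v_1$ among positions $0,\ldots,t-1$ is strictly smaller than $p-1$, whereas $c_1p^t-1$ has digit $p-1$ in each of those positions; Lucas therefore gives ${{u_1p^t+v_1 \choose c_1p^t-1}}\equiv 0$. Since $v_1+1\leq p^t-1$, the second term reduces to ${{u_1p^t+v_1+1 \choose c_1p^t}}\equiv{{u_1 \choose c_1}}$, so $a_2\equiv{{u_1 \choose c_1}}\not\equiv 0$ because $p-1\geq u_1\geq c_1\geq 1$. If instead $v_1=p^t-1$, then $|T|-k=(u_1+1)p^t-1$ and $|T|-k+1=(u_1+1)p^t$, and Lucas yields ${{(u_1+1)p^t-1 \choose c_1p^t-1}}\equiv{{u_1 \choose c_1-1}}$ together with ${{(u_1+1)p^t \choose c_1p^t}}\equiv{{u_1+1 \choose c_1}}$, whence $a_2\equiv{{u_1 \choose c_1-1}}+{{u_1+1 \choose c_1}}\not\equiv 0$ by the lemma's hypothesis. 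In either case $a_2\not\equiv 0\mod p$, so $i_0=2$.

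The only point requiring care is the bookkeeping of carries in these base-$p$ expansions, in particular expanding $c_1p^t-1$ and $(u_1+1)p^t-1$ correctly and treating the boundary value $u_1+1=p$ separately in the second branch — there both sides of ${{(u_1+1)p^t \choose c_1p^t}}\equiv{{u_1+1 \choose c_1}}$ are $\equiv 0\mod p$, so the identity still holds. Beyond this routine verification I do not expect any genuine obstacle.
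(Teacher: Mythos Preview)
Your proposal is correct and follows essentially the same route as the paper's own proof: reduce to showing $a_2\not\equiv 0\pmod p$, derive $p-1\geq u_1\geq c_1\geq 1$ from $|T|\geq 2k$, and split into the two cases $v_1\leq p^t-2$ and $v_1=p^t-1$, applying Lucas' Theorem in each. Your extra remark about the boundary value $u_1+1=p$ is a nice bit of care that the paper leaves implicit.
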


\begin{proof}
By the definition of $i_0$, it suffices to prove that $$a_{2}\not\equiv 0 \mod p.$$
Since $|T|\geq 2k$, we have that $|T|-k=u_1p^{t}+v_1\geq k=c_1p^{t}+1$, which implies that $p-1\geq u_1\geq c_1\geq 1$.
Since
\begin{equation*}
\begin{aligned}
a_2&={{|T|-k \choose k-2}}+(-1)^{2}\times {{|T|-k+1 \choose k-1}}\\
&={{u_1p^{t}+v_1 \choose c_1p^{t}-1}}+{{u_1p^{t}+v_1+1 \choose c_1p^{t}}}.
\end{aligned}
\end{equation*}
If $v_1\in [0,p^{t}-2]$, then $v_1+1\in [1,p^{t}-1]$.
This together with Lemma \ref{luc} yields that
\begin{align*}
a_{2}&={{u_1p^{t}+v_1 \choose c_1p^{t}-1}}+{{u_1p^{t}+v_1+1 \choose c_1p^{t}}}\\
&\equiv {{u_1 \choose c_1-1}}{{v_1 \choose p^t-1}}+{{u_1 \choose c_1}}{{v_1+1 \choose 0}}\\
&={{u_1 \choose c_1}}\not\equiv 0 \mod p,
\end{align*}
where ${{u_1 \choose c_1}}\not\equiv 0 \mod p$ follows from $p-1\geq u_1\geq c_1\geq 1$.
If $v_1=p^{t}-1$, then by Lemma \ref{luc} we have that
\begin{align*}
a_{2}&\equiv {{u_1p^{t}+v_1 \choose c_1p^{t}-1}}+{{u_1p^{t}+v_1+1 \choose c_1p^{t}}}\\
&={{(u_1+1)p^{t}-1 \choose c_1p^t-1}}+{{(u_1+1)p^{t} \choose c_1p^t}}\\
&\equiv {{u_1 \choose c_1-1}}{{p^t-1 \choose p^t-1}}+{{u_1+1 \choose c_1}}{{0 \choose 0}}\\
&={{u_1 \choose c_1-1}}+{{u_1+1 \choose c_1}}\not\equiv 0 \mod p.
\end{align*}
The last inequality follows from the lemma's assumed condition.
We complete the proof.
\end{proof}

\section{The proofs of Theorems \ref{leqlp} and \ref{leqG}}

In this section, we give some conditions such that $\mathsf{s}_{\leq k-1}(G)\leq 2\mathsf{D}(G)-k+1$ holds, and prove Theorems \ref{leqlp} and \ref{leqG}.

\begin{lem} \label{pleqk-1}
Let $G$ be a finite abelian $p$-group and  $k\in [\exp(G)+1,\mathsf{D}(G)]$ be a positive integer.
Let $S$ be a sequence over $G$ of length $2\mathsf{D}(G)-k+1$.
Suppose that $\mathsf{N}^i(S)=0$ for $i\in [\mathsf{D}(G)+1,|S|]$.
If $\binom{\mathsf{D}(G)}{k-1}\not\equiv 0 \mod p$, then $S$ has a nonempty zero-sum subsequence $T$ with $|T|\leq k-1$.
\end{lem}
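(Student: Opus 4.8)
The plan is to argue by contradiction. Suppose every nonempty zero-sum subsequence of $S$ has length $\geq k$. Combined with the hypothesis $\mathsf{N}^i(S)=0$ for $i\in[\mathsf{D}(G)+1,|S|]$, this gives $\mathsf{N}^i(S)=0$ for all $i\in[1,k-1]\cup[\mathsf{D}(G)+1,|S|]$, so that the only possibly nonzero values among $\mathsf{N}^0(S),\ldots,\mathsf{N}^{|S|}(S)$ are $\mathsf{N}^0(S)=1$ and $\mathsf{N}^k(S),\ldots,\mathsf{N}^{\mathsf{D}(G)}(S)$. This is essentially the set-up of the proof of Lemma \ref{zerosub}; the difference is that $S$ itself need not be zero-sum, and in fact $\mathsf{N}^{|S|}(S)=0$ since $|S|=2\mathsf{D}(G)-k+1>\mathsf{D}(G)$, which makes the bookkeeping slightly simpler (there is no extra $(-1)^{|S|}$ term in the matrix below).

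Next I would set up the same linear system as in the proof of Lemma \ref{zerosub}. Since $|S|\geq\mathsf{D}(G)$, Lemma \ref{lem1} applies to $S$ and to every subsequence $S_1\mid S$ with $|S_1|\geq\mathsf{D}(G)$, giving $\sum_j(-1)^j\mathsf{N}^j(S_1)\equiv 0\pmod p$. Summing this over all $S_1\mid S$ with $|S_1|=|S|-t$ for $t\in[0,|S|-\mathsf{D}(G)]=[0,\mathsf{D}(G)-k+1]$, and noting that a zero-sum subsequence of length $j$ is counted with multiplicity $\binom{|S|-j}{t}$, one obtains
\begin{equation*}
\binom{|S|}{t}+\sum_{j=k}^{\mathsf{D}(G)}(-1)^j\binom{|S|-j}{t}\,\mathsf{N}^j(S)\equiv 0\pmod p,\qquad t\in[0,\mathsf{D}(G)-k+1].
\end{equation*}
Setting $X=\big(1,(-1)^k\mathsf{N}^k(S),\ldots,(-1)^{\mathsf{D}(G)}\mathsf{N}^{\mathsf{D}(G)}(S)\big)^{T}$, this is $AX\equiv 0\pmod p$, where $A$ is the $(\mathsf{D}(G)-k+2)\times(\mathsf{D}(G)-k+2)$ matrix whose first column is $\big(\binom{|S|}{t}\big)_t$ and whose remaining columns are $\big(\binom{|S|-j}{t}\big)_t$ for $j=k,\ldots,\mathsf{D}(G)$.

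The key step is then to row-reduce $A$ exactly as in Lemma \ref{matrix0}, applied to our matrix (which carries only one exceptional column, not two, and has $x=0$). Writing $c=\mathsf{D}(G)-k+1$ one has $\binom{|S|-j}{t}=\binom{c+(\mathsf{D}(G)-j)}{t}$ and $\binom{|S|}{t}=\binom{c+\mathsf{D}(G)}{t}$, so the reduction turns the $j$-th column into $\big(\binom{\mathsf{D}(G)-j}{t}\big)_t$ and the first column into $\big(\binom{\mathsf{D}(G)}{t}\big)_t$. In the bottom row $t=\mathsf{D}(G)-k+1$ every entry $\binom{\mathsf{D}(G)-j}{\mathsf{D}(G)-k+1}$ with $j\geq k$ vanishes, since $\mathsf{D}(G)-j\leq\mathsf{D}(G)-k<\mathsf{D}(G)-k+1$; hence the last of the transformed equations reads $\binom{\mathsf{D}(G)}{\mathsf{D}(G)-k+1}\cdot 1\equiv 0\pmod p$, i.e. $\binom{\mathsf{D}(G)}{k-1}\equiv 0\pmod p$, contradicting the hypothesis. (When $k=\mathsf{D}(G)$ there is a single non-exceptional column and Lemma \ref{matrix0} does not literally apply, but then $A$ is the $2\times2$ matrix with rows $(1,1)$ and $(\mathsf{D}(G)+1,1)$, of determinant $-\mathsf{D}(G)$, so $AX\equiv0$ already forces $\mathsf{D}(G)=\binom{\mathsf{D}(G)}{k-1}\equiv0\pmod p$, again a contradiction.)

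The only real obstacle here is organisational: matching our matrix to Lemma \ref{matrix0} with the exceptional column playing the role of its $\binom{c+u_1}{i}$ column, $x=0$ and $u_1=\mathsf{D}(G)$, while keeping the several shifted indices straight ($|S|-\mathsf{D}(G)$, $\mathsf{D}(G)-k$, $\mathsf{D}(G)-j$, and $\mathsf{D}(G)-k+1$ versus $k-1$). Once the reduction is in place, the contradiction falls out of a single entry of the reduced matrix; the remaining ingredients — the multiplicity count and the invocations of Lemmas \ref{lem1} and \ref{matrix0} — are routine and mirror the proof of Lemma \ref{zerosub}.
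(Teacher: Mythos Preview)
Your proof is correct and follows essentially the same approach as the paper: set up the contradiction, invoke Lemma~\ref{lem1} on all subsequences of length $\geq\mathsf{D}(G)$, sum to obtain the matrix equation $AX\equiv0\pmod p$, row-reduce via Lemma~\ref{matrix0} (with $c=|S|-\mathsf{D}(G)$, $u_1=\mathsf{D}(G)$, $x=0$), and read off the contradiction $\binom{\mathsf{D}(G)}{k-1}\equiv0\pmod p$ from the bottom row. Your explicit handling of the edge case $k=\mathsf{D}(G)$ and your remark that $x=0$ here (since $\mathsf{N}^{|S|}(S)=0$) are details the paper leaves implicit.
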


\begin{proof}
Suppose that each zero-sum subsequence of $S$ has the length $\geq k$.
Since $\mathsf{N}^i(S)=0$ for $i\in [\mathsf{D}(G)+1,|S|]$,
by Lemma \ref{lem1} we have that the following equation holds for any $T|S$ with $|T|\in [\mathsf{D}(G),|S|]$
$$1+(-1)^{k}\mathsf{N}^{k}(T)+ \ldots +(-1)^{\mathsf{D}(G)}\mathsf{N}^{\mathsf{D}(G)}(T)\equiv 0\mod p.$$
It follows that for $0\leq t\leq |S|-\mathsf{D}(G)$
$$\sum\limits _{T\mid S,\ |T|=|S|-t}\big(1+(-1)^{k}\mathsf{N}^{k}(T)+ \ldots +(-1)^{\mathsf{D}(G)}\mathsf{N}^{\mathsf{D}(G)}(T)\big)\equiv 0\mod p.$$
Analysing the number of times each subsequence is counted, one obtains
\begin{equation}
\label{SD(G)}
\begin{aligned}
\binom{|S|}{|T|}&+(-1)^{k}\binom{|S|-k}{|T|-k}\mathsf{N}^{k}(S) \\
&+\ldots+(-1)^{\mathsf{D}(G)}\binom{|S|-\mathsf{D}(G)}{|T|-\mathsf{D}(G)}\mathsf{N}^{\mathsf{D}(G)}(S)\\
=\binom{|S|}{t}&+(-1)^{k}\binom{|S|-k}{t}\mathsf{N}^{k}(S) \\
&+\ldots+(-1)^{\mathsf{D}(G)}\binom{|S|-\mathsf{D}(G)}{t}\mathsf{N}^{\mathsf{D}(G)}(S)\equiv 0 \mod p .
\end{aligned}
\end{equation}
Set $X=(1,(-1)^{k}\mathsf{N}^{k}(S), \ldots, (-1)^{\mathsf{D}(G)}\mathsf{N}^{\mathsf{D}(G)}(S))^T$ and
\begin{equation*}
A:=\left(
  \begin{array}{cccc}
    {{|S| \choose 0}} & {{|S|-k \choose 0}} & \cdots & {{|S|-\mathsf{D}(G) \choose 0}}\\
    {{|S| \choose 1}} & {{|S|-k \choose 1}} & \cdots & {{|S|-\mathsf{D}(G) \choose 1}}\\
    {\vdots} & {\vdots} & {\vdots} & {\vdots}\\
    {{|S| \choose |S|-\mathsf{D}(G)}} & {{|S|-k \choose |S|-\mathsf{D}(G)}} & \cdots & {{|S|-\mathsf{D}(G) \choose |S|-\mathsf{D}(G)}}\\
  \end{array}
\right)_{(|S|-\mathsf{D}(G)+1)\times (\mathsf{D}(G)-k+2)}.
\end{equation*}
On the one hand, it can be deduced from (\ref{SD(G)}) that
$$AX\equiv 0\mod p.$$
It follows from Lemma \ref{matrix0} that by some row transformations, $A$ can be brought to the following form
\begin{equation*}
A'=\left(
  \begin{array}{cccc}
    {{\mathsf D(G) \choose 0}} & {{\mathsf{D}(G)-k \choose 0}} & \cdots & {{0 \choose 0}}\\
    {{\mathsf D(G) \choose 1}} & {{\mathsf{D}(G)-k \choose 1}} & \cdots & {{0 \choose 1}}\\
    {\vdots} & {\vdots} & {\vdots} & {\vdots}\\
    {{\mathsf D(G) \choose |S|-\mathsf{D}(G)}} & {{\mathsf{D}(G)-k \choose |S|-\mathsf{D}(G)}} & \cdots & {{0 \choose |S|-\mathsf{D}(G)}}\\
  \end{array}
\right)_{(|S|-\mathsf{D}(G)+1)\times (\mathsf{D}(G)-k+2)}
\end{equation*}
and $$AX\equiv A'X\equiv 0 \mod p.$$
Since $|S|=2\mathsf{D}(G)-k+1$, we have that $|S|-\mathsf{D}(G)+1=\mathsf{D}(G)-k+2$. It follows that $A'$ is a square matrix and $|S|-\mathsf{D}(G)>\mathsf{D}(G)-k$.
Note that $\binom{a}{b}=0$ if $0\leq a<b$. We have that
$${{\mathsf{D}(G)-k \choose |S|-\mathsf{D}(G)}}= \cdots = {{0 \choose |S|-\mathsf{D}(G)}}=0.$$
Thus
\begin{equation*}
\left(
  \begin{array}{cccc}
   {{\mathsf D(G) \choose |S|-\mathsf{D}(G)}} & {{\mathsf{D}(G)-k \choose |S|-\mathsf{D}(G)}} & \cdots & {{0 \choose |S|-\mathsf{D}(G)}}\\
  \end{array}
\right)X={{\mathsf D(G) \choose |S|-\mathsf{D}(G)}}=\binom{\mathsf{D}(G)}{k-1}\equiv 0\ \text{mod} \ p.
\end{equation*}
This is a contradiction to $\binom{\mathsf{D}(G)}{k-1}\not\equiv 0 \mod p$.
We complete the proof.
\end{proof}

By Corollary \ref{leqlpclcor}, we prove Theorem \ref{leqlp}.

\textit{The proof of Theorem \ref{leqlp}:}
If $\mathsf{N}^i(S)=0$ for $i\in [\mathsf{D}(G)+1,|S|]$, then by Lemma \ref{pleqk-1} we are done.

Now suppose that $S$ has a zero-sum subsequence $T$ with length $|T|\in [\mathsf{D}(G)+1,|S|]$.
If $|T|<2k$, then since $T$ is a zero-sum subsequence of $S$ with length $|T|\geq \mathsf{D}(G)+1$, by the definition of $\mathsf{D}(G)$ we have that $T$ has proper zero-sum subsequences $T_1, TT_1^{-1}$. Obviously, $\min\{|T_1|,|TT_1^{-1}|\}\leq k-1$. We are done.

Suppose that $|T|\geq 2k$. As in Lemma \ref{leqlpcl} we set $|T|-k=up+v=(u_1p^{t}+u_2)p+v$ and $k=cp+d=c_1p^{t+1}+d$ with $v,d\in [0,p-1]$, $t\geq 0$, $c_1,u_1\in[1,p-1]$ and $u_2\in [0,p^{t}-1]$. Repeat the reasoning of (\ref{*}) and we have that
$$u=u_1p^{t}+u_2,\ c=c_1p^t\ \text{and}\ p-1\geq u_1\geq c_1\geq 1.$$
Since $2k-\mathsf{D}(G)\geq p+d-v$, by Lemma \ref{zerosub} and Corollary \ref{leqlpclcor}, it suffices to prove that $$u_1+c_1+1<p.$$
Since $|S|=2\mathsf{D}(G)-k+1$ and $2\mathsf{D}(G)-2k+1<\frac{p-1}{2}p^{t+1}$, we have that
\begin{align*}
|T|-k=(u_1p^{t}+u_2)p+v=u_1p^{t+1}+u_2p+v\leq |S|-k=2\mathsf{D}(G)-2k+1<\frac{p-1}{2}p^{t+1}.
\end{align*}
It follows that $u_1<\frac{p-1}{2}$. This together with $p-1\geq u_1\geq c_1\geq 1$ yields that $$u_1+c_1+1\leq 2u_1+1<p.$$
We complete the proof.
\qed

By Lemma \ref{k1}, we prove Theorem \ref{leqG}.

\textit{The proof of Theorem \ref{leqG}:}
Let $S$ be a sequence over $G$ of length $|S|=2\mathsf{D}(G)-k+1$. In the following, we will prove that $S$ has a nonempty zero-sum subsequence of length $\leq k-1$. If there exists some $i\in [\mathsf{D}(G)+1,2k-1]$ such that $\mathsf{N}^i(S)\neq 0$, then $S$ has a zero-sum subsequence $T$ with length $|T|\in [\mathsf{D}(G)+1,2k-1]$. By the definition of $\mathsf{D}(G)$, $T$ has proper zero-sum subsequences $T_1, TT_1^{-1}$. Obviously, $\min\{|T_1|,|TT_1^{-1}|\}\leq k-1$. We are done.
Hence, we can suppose that
\begin{align}
\label{Ni}
\mathsf{N}^i(S)=0\ \text{for}\ i\in [\mathsf{D}(G)+1,2k-1].
\end{align}

(i) Since $\mathsf{D}(C_2^r)=r+1=2^{t+1}-1$ and $k-1=\frac{r+2}{2}=2^t$, by Lemma \ref{luc}, we have that $$\binom{\mathsf{D}(C_2^r)}{k-1}=\binom{2^{t+1}-1}{2^t}\equiv \binom{1}{1}\binom{2^{t}-1}{0}=1\not\equiv 0 \mod 2.$$
If $\mathsf{N}^i(S)=0$ for $i\in [\mathsf{D}(C_2^r)+1,|S|]=[2^{t+1}, 2^{t+1}+2^t-2]$, then by Lemma \ref{pleqk-1} we are done.

If there exists some $i\in [\mathsf{D}(C_2^r)+1,|S|]=[2^{t+1}, 2^{t+1}+2^t-2]$ such that $\mathsf{N}^i(S)\neq 0$, then
since $2k=2^{t+1}+2>\mathsf{D}(C_2^r)+1$, combining (\ref{Ni}) yields that $S$ has a zero-sum subsequence $T$ with length $|T|\in [2^{t+1}+2, 2^{t+1}+2^t-2]$. Thus $|T|\geq 2^{t+1}+2=2k$ and $|T|-k\in [2^{t}+1, 2^{t+1}-3]$.
Set $|T|-k=u_12^{t}+v_1=2^{t}+v_1$ and $k=c_12^{t}+1=2^t+1$ with $u_1=c_1=1$ and $v_1\in [1,2^t-3]$.
Since $2k-\mathsf{D}(C_2^r)=3\geq 2$ and $${{u_1 \choose c_1-1}}+{{u_1+1 \choose c_1}}={{1 \choose 1-1}}+{{1+1 \choose 1}}=1\not\equiv 0 \mod 2,$$ by Lemma \ref{k1} we have that $a_2\not\equiv 0 \mod 2.$
It follows from Lemma \ref{zerosub} that $S$ has a zero-sum subsequence of length $\leq k-1$.
We complete the proof of (i).

(ii)   Since $\mathsf{D}(C_p^4)=4p-3$ and $k-1=2p$, by Lemma \ref{luc}, we have that $$\binom{\mathsf{D}(C_p^4)}{k-1}=\binom{4p-3}{2p}\equiv \binom{3}{2}\binom{p-3}{0}=3\not\equiv 0 \mod p.$$ It follows since $p\geq 5$.
If $\mathsf{N}^i(S)=0$ for $i\in [\mathsf{D}(C_p^4)+1,|S|]=[4p-3, 6p-6]$, then by Lemma \ref{pleqk-1} we are done.
If there exists some $i\in [\mathsf{D}(C_p^4)+1,|S|]=[4p-3, 6p-6]$ such that $\mathsf{N}^i(S)\neq 0$, then
since $2k=4p+2>\mathsf{D}(C_p^4)+1$, combining (\ref{Ni}) yields that $S$ has a zero-sum subsequence $T$ with length $|T|\in [4p+2, 6p-6]$. Thus $|T|\geq 4p+2=2k$ and $|T|-k\in [2p+1,4p-7]$.

If $3p-1\not\in [2p+1,4p-7]$, i.e., $p=5$, then  $|T|-k\in [2p+1,3p-2]$.
Set $|T|-k=2p+v_1$ with $v_1\in [1,p-2]$.
Thus by Lemma \ref{luc} we have that
\begin{align*}
a_2&={{|T|-k \choose k-2}}+{{|T|-k+1 \choose k-1}}={{2p+v_1 \choose 2p-1}}+{{2p+v_1+1 \choose 2p}}\\
&\equiv \binom{2}{1}\binom{v_1}{p-1}+\binom{2}{2}\binom{v_1+1}{0}=1\not\equiv 0 \mod p.
\end{align*}

If $3p-1\in [2p+1,4p-7]$, i.e., $p\geq 6$, then set $|T|-k=u_1p+v_1$ with $u_1=2$ or $3$ and $v_1\in [0,p-1]$. If $|T|-k\neq 3p-1$, then $v_1\in [0.p-2]$.
This together with Lemma \ref{luc} yields that
\begin{align*}
a_2&={{|T|-k \choose k-2}}+{{|T|-k+1 \choose k-1}}={{u_1p+v_1 \choose 2p-1}}+{{u_1p+v_1+1 \choose 2p}}\\
&\equiv \binom{u_1}{1}\binom{v_1}{p-1}+\binom{u_1}{2}\binom{v_1+1}{0}
=\binom{u_1}{2}\not\equiv 0 \mod p.
\end{align*}
If $|T|-k=3p-1$, then
\begin{align*}
a_2&={{|T|-k \choose k-2}}+{{|T|-k+1 \choose k-1}}={{3p-1 \choose 2p-1}}+{{3p \choose 2p}}\\
&\equiv \binom{2}{1}\binom{p-1}{p-1}+\binom{3}{2}\binom{0}{0}=5\not\equiv 0 \mod p.
\end{align*}
It follows since $p\geq 6$.
Since $2k-\mathsf{D}(C_p^4)=5\geq 2$, by Lemma \ref{zerosub} $S$ has a zero-sum subsequence of length $\leq k-1$.
We complete the proof of (ii).

(iii)   Since $\mathsf{D}(C_p^d)=(p-1)d+1$ and $k-1=(d-1)p\in [p,\mathsf{D}(C_p^d)]$, we have that $(p-1)d+1\geq (d-1)p\geq p$, i.e., $2\leq d\leq p+1$. It follows that $p-d+1\in [0,p-1]$. This together with Lemma \ref{luc} yields that $$\binom{\mathsf{D}(C_p^d)}{k-1}=\binom{(d-1)p+p-d+1}{(d-1)p}\equiv \binom{d-1}{d-1}\binom{p-d+1}{0}=1\not\equiv 0 \mod p.$$
If $\mathsf{N}^i(S)=0$ for $i\in [\mathsf{D}(C_p^d)+1,|S|]=[(p-1)d+2, (d+1)p-2d+2]$, then by Lemma \ref{pleqk-1}, we are done.
If there exists some $i\in [\mathsf{D}(C_p^d)+1,|S|]=[(p-1)d+2, (d+1)p-2d+2]$ such that $\mathsf{N}^i(S)\neq 0$, then
since $2k-(\mathsf{D}(C_p^d)+1)=2(d-1)p+2-(p-1)d-2=(d-2)p+d>0$ (since $d\geq 2$), combining (\ref{Ni}) yields that $S$ has a zero-sum subsequence $T$ with length $|T|\in [2(d-1)p+2, (d+1)p-2d+2]$.
It follows that $2(d-1)p+2\leq (d+1)p-2d+2$.
It is also easy to see that $2(d-1)p+2-((d+1)p-2d+2)=(d-3)p+2d>0$ if $d\geq 3$. Hence, $d=2$.
Note that $\mathsf{s}_{\leq p}(C_p^2)=3p-2=2(\mathsf{D}(C_p^2))-p$.
We complete the proof.
\qed

\section{Concluding Remarks and Open Problems}

We close the paper by making the following conjectures together with some remarks on $k_G$ and $\mathsf{s}_{k \exp(G)}(G)$ for finite abelian groups $G$.

By Theorem \ref{D(G)-2}, we have the following conjecture.

\begin{conj}\label{D(G)-1}
Let $G$ be a finite abelian group with $r(G)\geq 2$ and $\mathsf{D}(G)=\mathsf{D}^*(G)$. Suppose that $G\not\cong C_2^4$ and $\mathsf{D}(G)-2\geq \exp(G)$. If $\exp(G)<\frac{\mathsf{D}(G)-1}{2}$,
then $\mathsf{s}_{\leq \mathsf{D}(G)-2}(G)=\mathsf{D}(G)+1$.
\end{conj}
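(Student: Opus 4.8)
The plan is to establish the two inequalities $\mathsf{s}_{\le \mathsf{D}(G)-2}(G)\ge \mathsf{D}(G)+1$ and $\mathsf{s}_{\le \mathsf{D}(G)-2}(G)\le \mathsf{D}(G)+1$ separately. The first is immediate: $k\mapsto \mathsf{s}_{\le k}(G)$ is non-increasing in $k$, so $\mathsf{s}_{\le \mathsf{D}(G)-2}(G)\ge \mathsf{s}_{\le \mathsf{D}(G)-1}(G)=\mathsf{D}(G)+1$ by the known value of $\mathsf{s}_{\le \mathsf{D}(G)-1}(G)$ \cite{WZ17}. So all the work is in the upper bound, and I would first reduce to $r(G)\ge 3$: for $r(G)=2$, writing $G=C_{n_1}\oplus C_{n_2}$ with $n_1\mid n_2$, one has $\mathsf{D}(G)=\mathsf{D}^*(G)=n_1+n_2-1$ and $\exp(G)=n_2$, so $\exp(G)<\frac{\mathsf{D}(G)-1}{2}$ would force $n_2<n_1-2$, which is impossible; hence the hypotheses are vacuous when $r(G)=2$. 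For $r(G)\ge 3$ we have $\mathsf{D}(G)=\mathsf{D}^*(G)\ge 4$, and inspecting $\mathsf{D}^*$ shows $\mathsf{D}(G)\le 5$ forces $G\cong C_2^3$ (for which $\exp(G)=2>\frac{\mathsf{D}(G)-1}{2}$, so it is out of scope) or $G\cong C_2^4$ (excluded), so we may assume $\mathsf{D}(G)\ge 6$; in particular $G$ is not of the form $C_2\oplus C_{2m}$, so Lemmas \ref{g2}, \ref{disg}, \ref{rank2} and Theorem \ref{D(G)-2} all apply to $G$.

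For the upper bound, suppose toward a contradiction that some $S\in\mathscr{F}(G)$ with $|S|=\mathsf{D}(G)+1$ has every nonempty zero-sum subsequence of length $\ge \mathsf{D}(G)-1$. Since $\mathsf{s}_{\le \mathsf{D}(G)-1}(G)=\mathsf{D}(G)+1=|S|$, $S$ has a zero-sum subsequence of length exactly $\mathsf{D}(G)-1$, which is necessarily minimal; write $S=Tg_1g_2$ with $T$ minimal zero-sum of length $\mathsf{D}(G)-1$. If $g_1=g_2$, then $S=Tg^2$ and Lemma \ref{g2} already produces a zero-sum subsequence of length $\le \mathsf{D}(G)-2$, a contradiction. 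If $\sigma(S)=0$, then $0\notin\mathsf{supp}(S)$, and for any $h\mid S$ the subsequence $Sh^{-1}$ has length $\mathsf{D}(G)$ and nonzero sum $-h$, hence contains a zero-sum subsequence $V$ which is proper in $Sh^{-1}$, so $|V|=\mathsf{D}(G)-1$; then $SV^{-1}$ is a nonempty zero-sum subsequence of $S$ of length $2<\mathsf{D}(G)-1$, again a contradiction. So henceforth $g_1\ne g_2$ and $g_0:=-\sigma(S)=-(g_1+g_2)\ne 0$.

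In this remaining case the strategy is to feed $S$ back into Theorem \ref{D(G)-2}. The sequence $Sg_0$ is zero-sum of length $\mathsf{D}(G)+2$, so by Theorem \ref{D(G)-2} it has a nonempty zero-sum subsequence $W$ with $|W|\le \mathsf{D}(G)-2$; since $S$ has none, $g_0\mid W$, say $W=W_0g_0$, and then $SW_0^{-1}$ is a nonempty zero-sum subsequence of $S$, so $|SW_0^{-1}|\ge \mathsf{D}(G)-1$, forcing $|W|\le 3$. Thus $Sg_0$ has a zero-sum subsequence of length $2$ or $3$, and tracing it back through $g_0=-(g_1+g_2)$ gives (in the length-$2$ case) that $g_1+g_2\in\mathsf{supp}(T)$, so $U:=S(g_1+g_2)^{-1}$ is a minimal zero-sum subsequence of $S$ of length $\mathsf{D}(G)$ in which every subsequence summing to $g_1+g_2$ has length $\le 2$ (the length-$3$ case produces similar or reducible relations). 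From here I would run the structural analysis as in the proofs of Lemma \ref{disg} and Theorem \ref{D(G)-2}: the length-$\mathsf{D}(G)$ subsequences $Sh^{-1}$, of sum $g_1+g_2-h$, yield reflection relations $h\mapsto g_1+g_2-h$ on $\mathsf{supp}(T)$, the exceptional case $2h=g_1+g_2$ being killed by Lemma \ref{g2}; composing reflections gives translations by the cyclic group $H:=\langle g_2-g_1\rangle$, so $T$ contracted along $H$ is a minimal zero-sum sequence supported in a cyclic subgroup of order roughly $|T|/|H|\ge \frac{\mathsf{D}(G)-1}{2}$; since $r(\langle S\rangle)\ge 2$ by Lemma \ref{rank2}, the analysis should force $S$ to lie in a rank-$2$ subgroup $C_m\oplus C_n$ ($m\mid n$) as a copy of the extremal configuration $e_1^{m-1}e_2^{n-1}(e_1+e_2)^2$ predicted by Lemma \ref{inv2}, for which $\exp(G)\ge n\ge \frac{\mathsf{D}(G)-1}{2}$, contradicting the hypothesis. (The groups $C_2^3$, $C_2^4$, $C_3^3$, where $\exp(G)=\frac{\mathsf{D}(G)-1}{2}$, lie exactly on the boundary, so the strict inequality admits no further exception.)

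The main obstacle is precisely this structural step in the case $g_1\ne g_2$. Compared to Theorem \ref{D(G)-2}, a sequence of length $\mathsf{D}(G)+1$ rather than $\mathsf{D}(G)+2$ admits only the $\mathsf{D}(G)+1$ probes $Sh^{-1}$ of length $\mathsf{D}(G)$, so far fewer reflection relations are available; moreover the analogue of Claim 3 of Lemma \ref{disg} — that $g_1,g_2\nmid T$ — fails here (for instance $T=e_1^{n-2}e_2^{n-2}(e_1+e_2)^2$, $g_1=e_1$, $g_2=e_2$), so the coset bookkeeping must accommodate $g_1,g_2\in\mathsf{supp}(T)$. Concretely I expect the crux to be a lemma on maximal minimal zero-sum sequences, to be proved by a case analysis mirroring Cases 1 and 2 of the proof of Theorem \ref{D(G)-2}: \emph{if $U$ is a minimal zero-sum sequence over $G$ with $r(G)\ge 3$, $|U|=\mathsf{D}(G)=\mathsf{D}^*(G)$, and every subsequence of $U$ with some fixed nonzero sum has length at most $2$, then $\exp(G)\ge \frac{\mathsf{D}(G)-1}{2}$.} With this lemma in hand, the conjecture follows from the two inequalities above.
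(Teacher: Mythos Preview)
This statement is listed in the paper as Conjecture~\ref{D(G)-1}, in the concluding ``Open Problems'' section; the paper gives no proof, only the remark that it holds for $C_3^3$ and $C_5^3$ by the tabulated values in Lemma~\ref{col}. So there is no ``paper's proof'' to compare your attempt against: the result is open.

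As for your proposal itself, the preliminary reductions are sound: the lower bound $\mathsf{s}_{\le\mathsf{D}(G)-2}(G)\ge \mathsf{D}(G)+1$ is immediate from monotonicity and the known $\mathsf{s}_{\le\mathsf{D}(G)-1}(G)=\mathsf{D}(G)+1$; the elimination of $r(G)=2$ is correct; the cases $g_1=g_2$ (via Lemma~\ref{g2}) and $\sigma(S)=0$ are handled cleanly; and the trick of appending $g_0=-\sigma(S)$ and invoking Theorem~\ref{D(G)-2} to force any short zero-sum of $Sg_0$ to have length $\le 3$ is a nice observation. In the $|W|=2$ subcase your construction of the minimal zero-sum $U=S(g_1+g_2)^{-1}$ of length $\mathsf{D}(G)$ with all $(g_1+g_2)$-sum subsequences of length $\le 2$ is also correct.

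The genuine gap is exactly where you flag it: the structural lemma you state at the end is not proved, and nothing in the paper supplies it either. Moreover, your reduction to that lemma is itself incomplete. In the $|W|=3$ subcase you only get $h_1h_2\mid S$ with $h_1+h_2=g_1+g_2$; the possibility $h_1h_2=g_1g_2$ (so $W=g_0g_1g_2$) is perfectly consistent with all your hypotheses and yields no new information beyond $S=Tg_1g_2$, so you cannot pass to a length-$\mathsf{D}(G)$ minimal zero-sum $U$ in that case and your sketch ``similar or reducible relations'' does not cover it. Even granting the $|W|=2$ reduction, your proposed lemma is a nontrivial inverse statement about maximal-length minimal zero-sum sequences, and the analogy with Lemma~\ref{disg} is loose: there you had three outside elements $g_1,g_2,g_3$, giving three reflections and hence a rank-$2$ stabilizer $H=\langle g_1-g_2,g_1-g_3\rangle$, whereas here only one reflection $h\mapsto (g_1+g_2)-h$ is available, producing a cyclic $H$ and a much weaker coset decomposition. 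In short, the proposal is a reasonable outline of attack, but it does not close the conjecture, and the paper does not either.
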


By Theorems \ref{leqlp} and \ref{leqG}, we have the following conjecture.

\begin{conj}\label{D(G)-1/2}
Let $G$ be a finite abelian group with $r(G)\geq 2$ and $\mathsf{D}(G)=\mathsf{D}^*(G)$. Let $k$ be a positive integer with $\mathsf{D}(G)-k\in [\exp(G),\mathsf{D}(G)]$. If $\mathsf{D}(G)-k\geq \frac{\mathsf{D}(G)+1}{2}$,
then $$\mathsf{s}_{\leq \mathsf{D}(G)-k}(G)\leq \mathsf{D}(G)+k.$$
If $\mathsf{D}(G)-k< \frac{\mathsf{D}(G)+1}{2}$,
then $$\mathsf{s}_{\leq \mathsf{D}(G)-k}(G)> \mathsf{D}(G)+k.$$
\end{conj}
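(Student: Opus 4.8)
The plan is to treat the two halves of the conjecture separately, since they point in opposite directions: the inequality $\mathsf{s}_{\leq \mathsf{D}(G)-k}(G)\le \mathsf{D}(G)+k$ for $\mathsf{D}(G)-k\ge \frac{\mathsf{D}(G)+1}{2}$ (equivalently $k\le \frac{\mathsf{D}(G)-1}{2}$) is an \emph{existence of a short zero-sum subsequence} statement, while $\mathsf{s}_{\leq \mathsf{D}(G)-k}(G)> \mathsf{D}(G)+k$ for $\mathsf{D}(G)-k<\frac{\mathsf{D}(G)+1}{2}$ is a \emph{construction} statement. Throughout write $D=\mathsf{D}(G)=\mathsf{D}^*(G)$, so $G\cong C_{n_1}\oplus\cdots\oplus C_{n_r}$ with $D=1+\sum(n_i-1)$; I would keep this decomposition in view and, whenever possible, induct on $r$ with $r=2$ (Lemma~\ref{r(G)=2}) as the base.

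For the lower half I would start from the extremal sequences already built in the proofs of Theorems~\ref{lower} and~\ref{lowerCnr}. Theorem~\ref{lower} produces, for $\exp(G)\le D-k\le 2\exp(G)-1$, a sequence of length $D+k-1$ every nonempty zero-sum subsequence of which has length $\ge D-k+1$; the task is to upgrade this to length $D+k$ (which is exactly the gap between $\ge \mathsf{D}(G)+k$ and $> \mathsf{D}(G)+k$) and to remove the restriction $D-k\le 2\exp(G)-1$. The extra room needed is precisely the slack $2k-D\ge 1$ furnished by the hypothesis $D-k<\frac{D+1}{2}$. Concretely I would (i) enlarge the multiplicity $x$ of $e_r$ in $S=e_r^{x}\prod_{i<r}e_i^{n_i-1}(e_r-e_i)^{n_i-1}$ by using $D=D^*$ to widen the admissible range of $x$, (ii) iterate the rank-doubling trick from the proof of Theorem~\ref{lowerCnr} on the $2$-primary part to gain length when $r$ is large, and (iii) for $D-k$ beyond $2\exp(G)-1$ stack several blocks $e_i^{n_i-1}(g-e_i)^{n_i-1}$ with $g$ chosen so that all short subsums are controlled. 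For groups of rank $\ge 4$ the tensor-type constructions behind Theorem~\ref{lowerCnr} beat $D+k$ comfortably; the genuinely tight case is $r=3$, where one needs a sharper lower bound for $\mathsf{s}_{\le d}(G)$ with $d$ close to $\exp(G)$ than Theorem~\ref{lowerCnr} provides (compare $\mathsf{s}_{\le 3}(C_3^3)=17$, $\mathsf{s}_{\le 5}(C_5^3)=33$ against the predicted $D+k$).

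For the upper half I would push the $p$-group machinery of Section~4 to all admissible $k$ and then bootstrap. Lemmas~\ref{zerosub}, \ref{simplify} and~\ref{k1} reduce a zero-sum subsequence of length $\le k-1$ in a long zero-sum sequence $T$ to the non-vanishing mod $p$ of $a_{i_0}$, which is controlled exactly when $d\ge v+1$ in the notation $|T|-k=up+v$, $k=cp+d$; the regime $d\le v$ (left open in the text right after Lemma~\ref{simplify}) is what a complete $p$-group proof must settle. Once all residues are handled one combines Theorem~\ref{pleqk-1} (the case $\mathsf{N}^i(S)=0$ for $i>\mathsf{D}(G)$) with Lemma~\ref{zerosub} exactly as in the proofs of Theorems~\ref{leqlp} and~\ref{leqG}. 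For general $G$ with $D=D^*$ one would write $G=\bigoplus_p G_p$, argue on each $p$-component, and splice via a rank induction with base cases $k=1$ (\cite{WZ17}) and $k=2$ (Theorem~\ref{D(G)-2}); alternatively, induct directly on $k$, using the $(k-1)$-case of the conjecture to extract a minimal zero-sum subsequence $T$ of length exactly $D-k+1$, writing $S=Tg_1\cdots g_{2k-1}$, showing the $g_i$ are pairwise distinct (a coincidence lets one collapse a pair and invoke the $(k-1)$-case, generalizing Lemma~\ref{g2}), and then running a stabilizer analysis in which $T$ becomes a union of cosets of $H=\langle g_i-g_j\rangle$ stable under the affine involutions $x\mapsto g_i+g_j-x$, playing the resulting order constraints on generators of $H$ off against $r(G)\ge 2$ and $D=D^*$ as in the proof of Theorem~\ref{D(G)-2}.

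The main obstacle is the upper half, and within it the structural combinatorics once $k\ge 3$. Even for $k=2$ the collapse in Theorem~\ref{D(G)-2} needed a long case split on $\ord(g_1)$ versus $\mathsf{D}(G)$ and on the $2$-torsion of $H=\langle g_1-g_2,g_1-g_3\rangle$; for larger $k$ the set $T$ must be stable under many more affine involutions while its complement avoids every zero-sum subsequence of length $\le D-k$, and the cardinality bookkeeping that closed the $k=2$ case does not obviously scale. A realistic programme is therefore: (a) complete the $d\le v$ analysis of $i_0$ to finish all $p$-groups; (b) upgrade the extremal constructions to settle the lower half, in particular finding the right rank-$3$ construction; (c) reduce the general $D=D^*$ case to $p$-groups by a rank induction, which itself seems to require a robust generalization of Lemmas~\ref{g2}--\ref{disg}.
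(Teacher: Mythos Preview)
The statement you are addressing is Conjecture~\ref{D(G)-1/2}, which the paper explicitly presents as an \emph{open problem} in Section~6; there is no proof in the paper to compare against. The paper only notes that the conjecture holds for $G=C_3^3$ and $G=C_5^3$ by the tabulated values in Lemma~\ref{col}, and that the first assertion holds when $\mathsf{D}(G)-k=\exp(G)$ for $p$-groups by \cite{SZ10}. Your write-up is not a proof either, and you say so: it is a research programme with several acknowledged gaps (the $d\le v$ regime in the $i_0$ analysis, the rank-$3$ lower-bound construction, the scaling of the Lemma~\ref{g2}--\ref{disg} stabilizer argument to $k\ge 3$, and the reduction from general $G$ with $\mathsf{D}(G)=\mathsf{D}^*(G)$ to $p$-groups). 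Each of these is a genuine obstruction, not a routine detail; in particular, the paper itself flags the $d\le v$ case as ``complex'' and only treats it in the special situations $i=p+d-v$ or $k\equiv 1\bmod p$, and the stabilizer/coset analysis in the proof of Theorem~\ref{D(G)-2} relies heavily on $H=\langle g_1-g_2,g_1-g_3\rangle$ having rank at most $2$, which fails once you have $2k-1$ elements $g_i$.

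So there is no discrepancy to report between your approach and the paper's: the paper does not claim a proof, and your proposal is a sketch of directions rather than an argument. If the intent was to supply a proof of the conjecture, that has not been done; if the intent was to outline a plausible attack, your outline is consistent with the partial results and techniques the paper develops, but none of the steps you list is currently completed in the literature cited.
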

It follows from  Lemma \ref{col} Parts (i) and (ii) that Conjectures \ref{D(G)-1} and  \ref{D(G)-1/2} holds for $G=C_3^3$ and $C_5^3$. Schmid and Zhuang \cite{SZ10} proved that the first assertion holds if $\mathsf{D}(G)-k=\exp(G)$ and $G$ is a finite abelian $p$-group.

Gao et al. \cite{[GHPS]} conjectured that

\begin{conj}\label{kDG}
Let $G$ be a finite abelian group. If $k \exp(G)\geq \mathsf{D}(G)$, then
$$\mathsf{s}_{k \exp(G)}(G) = \mathsf{D}(G) + k \exp(G) - 1.$$
\end{conj}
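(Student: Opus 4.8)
Write $n=\exp(G)$, $D=\mathsf D(G)$, and assume $kn\ge D$; we must show $\mathsf s_{kn}(G)=D+kn-1$. The lower bound $\mathsf s_{kn}(G)\ge D+kn-1$ is routine and is the easy ``$\ge$'' direction of Conjecture \ref{Gao}. Fix a zero-free sequence $U$ over $G$ of length $D-1$ and put $S_0=0^{\,kn-1}\boldsymbol{\cdot}U$, so $|S_0|=D+kn-2$. Any zero-sum subsequence of $S_0$ has the shape $0^{a}\boldsymbol{\cdot}V$ with $V\mid U$ and $\sigma(V)=0$; as $U$ is zero-free, $V$ is empty, so its length is $a\le kn-1$. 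Hence $S_0$ has no zero-sum subsequence of length $kn$, giving the bound.

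\emph{Upper bound: reduction.} Since $kn\ge D$, the trivial evaluation $\mathsf s_{\le kn}(G)=D$ applies, so the assertion is exactly the ``$\le$'' direction of Conjecture \ref{Gao} in this range; thus it suffices to prove that every sequence $S$ with $|S|=D+kn-1$ has a zero-sum subsequence of length exactly $kn$. Assume not. A maximal zero-sum subsequence $W_0\mid S$ has zero-free complement, hence $|W_0|\ge|S|-(D-1)=kn$, so $|W_0|>kn$. Let $M$ be the smallest length of a zero-sum subsequence of $S$ exceeding $kn$, fix one such subsequence $W$, and set $r:=M-kn\in[1,D-1]$. As $M>D$, $W$ is not minimal; for every minimal zero-sum subsequence $B\mid W$ the sequence $WB^{-1}$ is zero-sum of length $M-|B|<M$, and by minimality of $M$ together with the hypothesis ``no zero-sum subsequence of length $kn$'' this length must be $<kn$, i.e.\ $|B|\ge r+1$. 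Thus $W$ is $\le r$-zero-free of length $kn+r$, so
\begin{equation*}
\mathsf s_{\le r}(G)\ \ge\ kn+r+1\ \ge\ D+r+1 ,
\end{equation*}
and it is enough to show $\mathsf s_{\le r}(G)\le D+r$ (hence $\le kn+r$) for each $r\in[1,D-1]$ that can occur.

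\emph{The range of $r$, and the obstacle.} For $r\ge\frac{D+1}{2}$ this is the good regime: one has $\mathsf s_{\le r}(G)\le 2D-r\le D+r$, which is Lemma \ref{r(G)=2} when $r(G)=2$, Theorem \ref{D(G)-2} when $r=D-2$, and in general is exactly the statement $k_G\le r$ (conjecturally valid for all $r\ge\frac{D+1}{2}$ by the paper's conjecture $k_G=\frac{D+1}{2}$). The delicate regime is $r<\frac{D+1}{2}$, and above all $r<\exp(G)$, where $\mathsf s_{\le r}(G)=+\infty$ and no global length bound is available; here one cannot conclude from a size estimate alone and must use that the $\le r$-zero-free sequence $W$ sits inside the very short sequence $S$ (note $|SW^{-1}|=D-1-r$), excluding the near-extremal configurations by an inverse-type analysis — for instance, an element of order $\le r$ occurring with multiplicity $\ge r$ in $W$, or a minimal zero-sum block of $W$ of length $\le r$, would already produce a zero-sum subsequence of length $kn$. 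I expect this small-$r$ structural step to be the main obstacle, and it is the reason the statement is still only a conjecture.

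\emph{Status and a $p$-group variant.} By the reduction above, Conjecture \ref{kDG} in the range $kn\ge D$ is equivalent to the corresponding instances of Conjecture \ref{Gao}, and is therefore settled in the cases where those are: for cyclic $G$ (iterated Erd\H{o}s--Ginzburg--Ziv gives $\mathsf s_{kn}(C_n)=(k+1)n-1=\mathsf D(C_n)+kn-1$), for groups of rank two, and for various $p$-groups and groups with $\mathsf D(G)=\mathsf D^*(G)$ for which the $\mathsf s_{\le r}$-bounds above are known. For finite abelian $p$-groups there is also the counting route of Section 4: by Lemma \ref{lem1} it would be enough to prove $\mathsf N^{kn}(S)\not\equiv 0\pmod p$ whenever $|S|=D+kn-1$, which sidesteps the inverse analysis entirely.
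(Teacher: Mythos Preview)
The statement you are attempting is Conjecture~\ref{kDG}, which the paper merely quotes from Gao et al.\ \cite{[GHPS]} and does \emph{not} prove; there is no proof in the paper to compare your proposal against, and your own text already acknowledges that the argument is incomplete (``it is the reason the statement is still only a conjecture'').

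Your lower-bound construction $S_0=0^{\,kn-1}\boldsymbol{\cdot}U$ is correct and standard. Your reduction of the upper bound---take a shortest zero-sum subsequence $W$ of length $M>kn$, set $r=M-kn$, and deduce that $W$ has no nonempty zero-sum subsequence of length $\le r$, whence $\mathsf s_{\le r}(G)\ge kn+r+1$---is sound. You then correctly identify the genuine gap: when $r<\exp(G)$ one has $\mathsf s_{\le r}(G)=\infty$ and the reduction is vacuous, so a structural/inverse analysis of the specific $W$ inside the short ambient $S$ would be required. That is indeed the core obstruction, and it is unresolved.

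One over-claim to flag: in the ``good regime'' $r\ge\frac{D+1}{2}$ you assert $\mathsf s_{\le r}(G)\le 2D-r$ as if it were available. This inequality is exactly the first half of Conjecture~\ref{D(G)-1/2} in the paper and is \emph{not} established in general; the paper proves it only for rank two (Lemma~\ref{r(G)=2}), for $r=D-2$ (Theorem~\ref{D(G)-2}), and in the partial $p$-group cases of Theorems~\ref{leqlp} and~\ref{leqG}. So even your ``easy'' range is conditional on another open conjecture, and the proposal should be read as a programme rather than a proof.
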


Gao and Thangadurai \cite{GT} noticed that if $k \exp(G) < \mathsf{D}(G)$, then
$\mathsf{s}_{k \exp(G)}(G) > k \exp(G) + \mathsf{D}(G)-1$.
If Conjecture \ref{D(G)-1/2} holds, then
$$\mathsf{s}_{\leq k \exp(G)}(G)\leq 2\mathsf{D}(G)-k \exp(G) \ \text{if}\ k \exp(G)\geq \frac{\mathsf{D}(G)+1}{2}$$
and $$\mathsf{s}_{\leq k \exp(G)}(G)> 2\mathsf{D}(G)-k \exp(G) \ \text{if}\ k \exp(G)< \frac{\mathsf{D}(G)+1}{2}.$$

By the above and Conjectures \ref{Gao}, \ref{kDG}, we suggest the following conjecture.

\begin{conj}\label{kexpG}
Let $G$ be a finite abelian group. If $k \exp(G)\in [\frac{\mathsf{D}(G)+1}{2},\mathsf{D}(G)]$, then
$$\mathsf{s}_{k \exp(G)}(G)\leq 2\mathsf{D}(G) - 1.$$
If $k \exp(G)<\frac{\mathsf{D}(G)+1}{2}$, then
$$\mathsf{s}_{k \exp(G)}(G)> 2\mathsf{D}(G) - 1.$$
\end{conj}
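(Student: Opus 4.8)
The needed reduction is essentially the one sketched in the paragraph preceding the statement; the plan is to make it precise and then to confront the inputs it rests on. Write $m=k\exp(G)$ and put $\ell=\mathsf{D}(G)-m$. If $m\in[\frac{\mathsf{D}(G)+1}{2},\mathsf{D}(G)]$, then $\ell\in[0,\frac{\mathsf{D}(G)-1}{2}]$ and $\mathsf{D}(G)-\ell=m\in[\exp(G),\mathsf{D}(G)]$, so $\ell$ is an admissible value for the parameter of Conjecture \ref{D(G)-1/2}. First I would clear the boundary case $m=\mathsf{D}(G)$ (when it occurs): here $k\exp(G)\geq\mathsf{D}(G)$, and Conjecture \ref{kDG} gives $\mathsf{s}_{m}(G)=\mathsf{D}(G)+m-1=2\mathsf{D}(G)-1$. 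For $m<\mathsf{D}(G)$, the first assertion of Conjecture \ref{D(G)-1/2} gives $\mathsf{s}_{\leq m}(G)\leq\mathsf{D}(G)+\ell=2\mathsf{D}(G)-m$, and feeding this into $\mathsf{s}_{m}(G)\leq\mathsf{s}_{\leq m}(G)+m-1$---the nontrivial direction of Conjecture \ref{Gao}---yields $\mathsf{s}_{m}(G)\leq 2\mathsf{D}(G)-1$. For the second assertion, when $m<\frac{\mathsf{D}(G)+1}{2}$ the second assertion of Conjecture \ref{D(G)-1/2} gives $\mathsf{s}_{\leq m}(G)>\mathsf{D}(G)+\ell$, hence $\mathsf{s}_{\leq m}(G)\geq 2\mathsf{D}(G)-m+1$; here the reverse inequality $\mathsf{s}_{m}(G)\geq\mathsf{s}_{\leq m}(G)+m-1$ is elementary---pad an extremal sequence for $\mathsf{s}_{\leq m}(G)$ with $m-1$ copies of $0$, which destroys every zero-sum subsequence of length exactly $m$---so $\mathsf{s}_{m}(G)\geq 2\mathsf{D}(G)>2\mathsf{D}(G)-1$.

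Since Conjecture \ref{D(G)-1/2} assumes $r(G)\geq 2$ and $\mathsf{D}(G)=\mathsf{D}^*(G)$, the reduction must be supplemented for the remaining groups. For $G$ cyclic of order $n\geq 2$ one has $\exp(G)=\mathsf{D}(G)=n$, so $m=kn\geq n$: the case $m<\frac{n+1}{2}$ is then vacuous, while $m\in[\frac{n+1}{2},n]$ forces $m=n$, covered by the Erd\"{o}s--Ginzburg--Ziv identity $\mathsf{s}_{n}(C_n)=2n-1$. For $r(G)\geq 2$ with $\mathsf{D}(G)>\mathsf{D}^*(G)$---a case that by Lemma \ref{D(G)} excludes all $p$-groups---one would first need to extend Conjecture \ref{D(G)-1/2} to such $G$, or argue ad hoc.

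Even without the open conjectures, the same strategy yields $\mathsf{s}_{m}(G)\leq 2\mathsf{D}(G)-1$ in any family where both a case of the first assertion of Conjecture \ref{D(G)-1/2} and the corresponding case of the nontrivial direction of Conjecture \ref{Gao} are already known: for the former one may invoke Theorems \ref{leqlp} and \ref{leqG} ($p$-groups such as $C_p^d$, $C_p^4$, and $C_2^{2^{t+1}-2}$) or the Schmid--Zhuang case $\mathsf{D}(G)-m=\exp(G)$ for $p$-groups \cite{SZ10}. For the lower-bound half, Theorem \ref{lower} already gives $\mathsf{s}_{\leq m}(G)\geq 2\mathsf{D}(G)-m$ for $p$-groups with $\mathsf{D}(G)\leq 2\exp(G)-1$; to upgrade this to the strict inequality needed here I would re-inspect the extremal sequence $S=e_r^{x}\prod_{i=1}^{r-1}e_i^{n_i-1}(e_r-e_i)^{n_i-1}$ from that proof and verify that it carries no zero-sum subsequence of length \emph{exactly} $m$, not merely none of length below $m$; combined with the elementary padding inequality this would give $\mathsf{s}_{m}(G)>2\mathsf{D}(G)-1$ unconditionally in that range.

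The hard part will be the nontrivial direction $\mathsf{s}_{k\exp(G)}(G)\leq\mathsf{s}_{\leq k\exp(G)}(G)+k\exp(G)-1$ of Conjecture \ref{Gao}: without it there is no general device for converting an $\mathsf{s}_{\leq m}$ bound into an $\mathsf{s}_{m}$ bound, so the upper-bound half of the statement is at least as hard as that conjecture, while the lower-bound half is only as hard as the (still open) second assertion of Conjecture \ref{D(G)-1/2}. A proof bypassing Conjecture \ref{Gao} would presumably have to exhibit, inside an arbitrary sequence of length $2\mathsf{D}(G)-1$, a zero-sum subsequence of length exactly $k\exp(G)$---most plausibly by refining the Lucas-theorem and counting arguments of Sections 4 and 5, which would be a substantial undertaking.
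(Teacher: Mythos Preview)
Your reduction is exactly the heuristic the paper uses to \emph{motivate} this conjecture: combine Conjecture~\ref{D(G)-1/2} (bounds on $\mathsf{s}_{\leq m}$) with Conjecture~\ref{Gao} (the identity $\mathsf{s}_{m}=\mathsf{s}_{\leq m}+m-1$) and, at the boundary, Conjecture~\ref{kDG}. The elementary padding inequality $\mathsf{s}_{m}(G)\geq\mathsf{s}_{\leq m}(G)+m-1$ is correct, and your treatment of the cyclic case is fine.

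The genuine gap is more fundamental than any step in the argument: the statement you are trying to prove is a \emph{conjecture}, not a theorem, and the paper does not prove it---indeed, immediately after stating it the paper shows it is \emph{false} in general. Sidorenko's result gives $\mathsf{s}_{2m}(C_2^{2m+1})=4m+5$ for odd $m$, while $\mathsf{D}(C_2^{2m+1})=2m+2$ and $2m\in[\tfrac{\mathsf{D}+1}{2},\mathsf{D}]$ for $m\geq 2$; thus for odd $m\geq 3$ one has $\mathsf{s}_{2m}(C_2^{2m+1})=4m+5>4m+3=2\mathsf{D}-1$, contradicting the first assertion. The paper explicitly writes ``Conjectures~\ref{Gao} and~\ref{kexpG} are not always true, but we think that these conjectures still make sense.'' So no proof strategy can succeed in full generality, and in particular your reliance on the nontrivial direction of Conjecture~\ref{Gao} is fatal: the same family $C_2^{2m+1}$ is used in the paper to falsify Conjecture~\ref{Gao} as well (there $\mathsf{s}_{2m}=4m+5>\mathsf{s}_{\leq 2m}+2m-1=4m+2$). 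Your closing suggestion---to bypass Conjecture~\ref{Gao} by refining the counting arguments of Sections~4 and~5---would therefore have to fail for these groups too; at best it could yield the upper bound for a restricted class (say odd $p$-groups), which is indeed all one can hope for.
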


Gao and Thangadurai \cite{GT} proved that $\mathsf{s}_{2\cdot 3}(C_3^3)=13=2\mathsf{D}(C_3^3) - 1$, which satisfies Conjecture \ref{kexpG}. Note that Conjectures \ref{Gao} and \ref{kexpG} are not always true, but we think that these conjectures still make sense.
Sidorenko \cite{S20} proved that $\mathsf{s}_{2m}(C_2^{2m+1})=4m+5$ for odd $m$.
Wang and Zhao \cite{WZ17} proved that $\mathsf{s}_{\leq r-k}(C_2^r)=r+2\ \text{for all}\
r-k\in \Big[\Big\lceil \frac{2r+2}{3}\Big\rceil, r\Big]$.
Thus $\mathsf{s}_{\leq 2m}(C_2^{2m+1})=2m+3\ \text{for}\ m\geq 2$.
It follows that
$$\mathsf{s}_{2m}(C_2^{2m+1})=4m+5> \mathsf{s}_{\leq 2m}(C_2^{2m+1})+2m-1=4m+2 \ \text{for} \ m\geq 3 \ \text{odd}.$$
Hence, Conjecture \ref{Gao} is not always true.
Since $\mathsf{D}(C_2^{2m+1})=2m+2$, we have that $2m\in [\frac{\mathsf{D}(C_2^{2m+1})+1}{2},\mathsf{D}(C_2^{2m+1})]=[m+2,2m+2]$ for $m\geq 2$.
It follows that $$\mathsf{s}_{2m}(C_2^{2m+1})=4m+5>2\mathsf{D}(C_2^{2m+1}) - 1=4m+3 \ \text{for} \ m\geq 3 \ \text{odd}.$$
Hence, Conjecture \ref{kexpG} is not always true.


\bibliographystyle{amsalpha}

\end{document}